\numberwithin{equation}{section}
\newcolumntype{P}[1]{>{\centering\arraybackslash}p{#1}}
\newcolumntype{M}[1]{>{\centering\arraybackslash}m{#1}}
\def\bigF{ {\mathcal {P}}}
\def\bigK{ {\mathcal {K}}}
\def\Exp{\mathbb{E}}
\def\CLambda{\Psi}
\def\U{ {\mathcal {U}}}
\def\P{ {\mathcal {M}}}
\def\QF{\mathcal{P}}
\def\mrho{\mu}
\newcommand{\R}{\mathbb R}
\newcommand{\N}{\mathbb N}
\newcommand{\epsi}{{\varepsilon}}
\newcommand{\sidecap}[1]{ {\begin{sideways}\parbox{3.cm}{\centering #1}\end{sideways}} }
\def\be#1\ee{\begin{equation}#1\end{equation}}
\newtheorem{thm}{Theorem}[section] 
\newtheorem{definition}[thm]{Definition}
\newtheorem{lem}[thm]{Lemma}
\newtheorem{defn}{Definition}[section] 
\newtheorem{remark}{Remark}[section] 
\newcommand{\bq}{\begin{equation}}
\newcommand{\eq}{\end{equation}}
\def\bqa{\begin{eqnarray}}
\def\eqa{\end{eqnarray}}
\def\e{\epsilon}
\newcommand{\bd}{\begin{displaymath}}
\newcommand{\ed}{\end{displaymath}}
\newcommand{\ba}{\begin{eqnarray}}
\newcommand{\ea}{\end{eqnarray}}
\def\N{\mathbb{N}}
\def\R{\mathbb{R}}
\def\pa{\partial}
\def\epsi{ \varepsilon}
\newcommand{\lt}{\left}
\newcommand{\rt}{\right}
\newcommand{\om}{\Omega}
\newcommand{\lal}{\langle}
\newcommand{\ral}{\rangle}
\newcommand{\ls}{\lesssim}
\newcommand{\mc}{\mathcal{C}}
\newcommand{\F}{\mathcal{F}}
\newcommand{\W}{\mathcal{W}}
\newcommand{\weakto}{\stackrel{*}{\rightharpoonup}}
\newcommand{\mt}{\mathcal{T}}
\begin{document}
\title{Mean field  control hierarchy}

\author[1]{Giacomo Albi \thanks{giacomo.albi@ma.tum.de, address: Boltzmannstr. 3, D-85748 Garching bei M\"unchen, Germany}}
\author[1]{Young-Pil Choi  \thanks{ychoi@ma.tum.de address: Boltzmannstr. 3, D-85748 Garching bei M\"unchen, Germany}}
\author[1]{Massimo Fornasier  \thanks{massimo.fornasier@ma.tum.de, address: Boltzmannstr. 3, D-85748 Garching bei M\"unchen, Germany}}
\author[2]{Dante Kalise \thanks{dante.kalise@oeaw.ac.at, address: RICAM, Altenbergerstr. 69, A-4040 Linz, Austria}}
\affil[1]{Department of Mathematics, TU M\"unchen}
\affil[2]{RICAM, Austrian Academy of Sciences, Linz}

\maketitle

\begin{abstract}
In this paper we model the role of a government of a large population as a mean field optimal control problem. Such control problems are constrainted  by a PDE of continuity-type, governing the dynamics of the probability distribution of the agent population. We show the existence of mean field optimal controls both in the stochastic and deterministic setting. We derive rigorously the first order optimality conditions useful for numerical computation of mean field optimal controls. We introduce a novel approximating hierarchy of sub-optimal controls based on a Boltzmann approach, whose computation requires a very moderate numerical complexity with respect to the one of the optimal control. We provide numerical experiments for models in opinion formation comparing the behavior of the control hierarchy.
\end{abstract}

\section{Introduction}
\maketitle
Self-organization in  social interactions 
is a fascinating mechanism, which inspired 
the mathematical modeling of multi-agent interactions towards formation of coherent global behaviors, with applications in the study of biological,  social, and economical  phenomena.  
Recently there has been a vigorous development of literature in applied mathematics and physics describing collective behavior of 
multiagent systems \cite{CuckerDong11,cucker-mordecki,CS,GC04, MR2000132,KeMinAuWan02,vicsek}, towards modeling phenomena in biology, such as cell aggregation and motility \cite{CDFSTB03,kese70,KocWhi98,be07},  coordinated animal motion \cite{BCCCCGLOPPVZ09,MR2507454,ChuDorMarBerCha07,crpito10,CouFra02,CKFL05,CS,Niw94,PE99,ParVisGru02,Rom96,TonTu95,YEECBKMS09}, coordinated human \cite{crpito11,CucSmaZho04,MR2438215} and synthetic agent behavior and interactions, such as 
cooperative robots \cite{ChuHuaDorBer07,LeoFio01,PerGomElo09,SugSan97}. As it is very hard to be exhaustive in accounting all the developments of this very fast growing
field, we refer to \cite{CCH13,CCP16, CHL16, cafotove10,viza12}  for recent surveys.\\
Two main mechanisms are considered in such models to drive the dynamics. The first, which takes inspiration, e.g., from physics laws of motion, is {based on} binary forces encoding observed ``first principles'' 
of biological, social, or economical interactions.
Most of these models start from particle-like systems, borrowing a leaf from Newtonian physics, by including fundamental ``social interaction'' forces within classical systems of 1st or 2nd order equations. In this  paper we mix general principles with concrete modeling instances to encounter  the need of both a certain level of generality and  to provide immediately  a concrete applications. Accordingly, we  consider here mainly large particle/agent systems of form:
\begin{align}\label{eq:HK}
 dx_i = \lt(\frac{1}{N}\sum_{j=1}^N P(x_i,x_j)(x_j-x_i)\rt)dt + \sqrt{2\sigma} \,dB_i^t, \qquad i=1,\ldots,N, \quad t > 0,
\end{align}
where $P(\cdot,\cdot)$ represents the communication function between agents $x_i \in \mathbb R^d$ and $B_i^t$ is a $d$-dimensional Brownian motion.

  The second mechanism, which we do not address in detail here, is based on evolutive 
games, where the dynamics is driven by the simultaneous optimization of costs by the players, perhaps subjected to selection, from game theoretic models of evolution \cite{zbMATH01169593} to 
mean field  games, introduced in \cite{lali07} and independently  under the name
Nash Certainty Equivalence (NCE) in \cite{HCM03}, later greatly popularized, e.g., within consensus problems, for instance in \cite{NCM10,NCM11}.

The common viewpoint of these branches of mathematical modeling of multi-agent systems is that the dynamics are based on the {\it free} interaction of the agents  or {\it decentralized} control. The wished phenomenon
to be described is their self-organization in terms of the formation of complex  macroscopic patterns. 

One fundamental goal of these studies is in fact to reveal the possible relationship between the simple binary forces acting at individual level, being the ``first principles'' of
social interaction or the game rules, and the potential emergence of a global behavior in the form of specific patterns.  

For instance one can use the model in \eqref{eq:HK}, for $d=1$ and  $x_i\in I=[-1,1]$, a bounded interval, to formulate classical opinion models, where $x_i$ represents an opinion in the continuos set between two opposite opinions $\{-1,1\}$. According to the choice of the communication function $P(\cdot,\cdot)$, consensus can emerge or not, and different studies have been made in order to enforce the emergence of a global consensus, \cite{ABCK15,AHP,APZa,T,DHL14}. 
The mathematical property for a system to form patterns is actually its persistent {\it compactness}. 
There are actually several mechanisms of promotion of compactness to yield eventually self-organization. In the recent paper  \cite{MT13}, for instance, the authors name the {\it heterophilia}, i.e., the tendency to bond more with those who are ``different'' rather than those who are similar, as a positive mechanism in consensus models to reach accord. However also in {\it homophilious} societies  influenced by more local interactions, global self-organization towards consensus can be expected as soon as enough initial coherence is given. At this point, and perhaps reminiscently  of biblic stories from the Genesis, one could enthusiastically argue {\it ``Let us give them good rules and they will find their way!''} Unfortunately, this is not true, at all. In fact, in homophilious regimes there are plenty of situations where patterns will not spontaneously form. In Section \ref{sec:num} below we mathematically demonstrate with a few simple numerical examples the incompleteness of the self-organization paradigm, and 
we refer to \cite{bofo16} for its systematic discussion. Consequently, we propose to amend it by allowing possible external interventions in form of {\it centralized} controls. The human society calls them {\it government}.

The general idea consists in considering dynamics of the form
\begin{align}\label{eq:HKu}
d x_i =\lt(\frac{1}{N}\sum_{j=1}^N P(x_i,x_j)(x_j-x_i)\rt)dt + f_i\,dt + \sqrt{2\sigma} \,dB_i^t, \qquad i=1,\cdots,N, \quad t > 0,
\end{align}
where the control $f = (f_1,\ldots,f_N)$ minimizes a given functional $J(x,f)$. As an example we can 
consider the following variational formulation
\begin{align}\label{eq:funcJu}
f = \arg\min_{g\in\U} J(x,g):=  \mathbb E \left [ \int_0^T\frac{1}{N}\sum_{i=1}^N\left(\frac{1}{2}|x_i-x_d|^2 + \gamma\CLambda(g_i)\right)\,dt \right ],
\end{align}
where $x_d$ represents a target point, $\gamma$ is the penalization parameter of the control $g$, which is chosen among the admissible controls in $\U$, and $\CLambda:\R^d \to \R_+ \cup \{0 \}$ is a convex function. The choice of this particular cost function, and especially of the term $\int_0^T \frac{1}{2}\int |x-x_d|^2\mrho(x,t)\,dx$ is absolutely arbitrary. It is consistent with our wish of mixing general statements with instances of applications, and the cost function is so given to provide immediately a specific instance of application oriented to opinion consensus problems.
Similar models as \eqref{eq:funcJu} have been studied recently also for the flocking dynamics in \cite{CFPT,FS13,BF,APa} and one can of course consider many more instances, as soon as one ensures enough continuity of the cost, see, e.g., \cite{FS13}.

As the number of particles $N \to \infty$, the finite dimensional optimal control problem with ODE constraints \eqref{eq:HKu}-\eqref{eq:funcJu} converges to the following mean field optimal control problem \cite{befrph13,Achdou2016,FS13}:
\bq\label{main_eq}
\pa_t \mrho + \nabla \cdot \lt(\lt(\QF[\mrho] + f\rt)\mrho \rt) = \sigma \Delta \mrho,
\eq
where the interaction force $\mathcal P$ is given by
\begin{equation}\label{eq:kernelP}
\QF[\mrho](x) = \int P(x,y)(y-x)\mrho(y,t)\,dy
\end{equation}
and the solution $\mrho$ is controlled by the minimizer of the cost functional
\bq\label{main_func}
J(\mrho,f) = \int_0^T\left(\frac{1}{2}\int |x-x_d|^2\mrho(x,t)\,dx + \gamma\int \CLambda(f)\mrho(x,t)\,dx\right)\,dt.
\eq
To a certain extent, the mean field optimal control problem  \eqref{main_eq}-\eqref{main_func} can be viewed as a generalization of optimal transport problems \cite{zbMATH01419740} for which the term $P \equiv 0$, the term $\int_0^T \frac{1}{2}\int |x-x_d|^2\mrho(x,t)\,dx$ does not appear in the cost, and final conditions are given.
Differently from mean field games \cite{lali07}  the goal here is not to derive the equlibria of a multi-player game, rather to compute  mean field optimal government strategies for a population so large that the curse of dimensionality would otherwise prohibit numerical solutions. 
The mean field optimal control problem \eqref{main_eq}-\eqref{main_func} provides an artificial  confinement vector field $f$, inducing the right amount of compactness to have global convergence to steady states (pattern formation). {\it Local} convergence towards, e.g., to global Maxwellians, is provided for certain second order mean field-type of equations in \cite{Choi16, DFT10}. Hence, our results can be also interpreted as an external model perturbation to induce {\it global} stability. 
\\

In this paper we provide a friendly introduction to mean field optimal controls of the type \eqref{main_eq}-\eqref{main_func}, showing their main analytical properties and furnish a simple route to their numerical solutions, which we call ``the control hierarchy''. Although some of the results contained in this paper are certainly also derived elsewhere, see, e.g., \cite{befrph13,FS13}, we made an effort to present them in a simplified form as well as providing rigorous derivations.

In particular, in Section \ref{sec:existence}, we show existence of mean field optimal controls for first order models  in case of both stochastic and deterministic control problems.
 We also derive rigorously in Section \ref{sec:firstorderopt} the corresponding first order optimality conditions, resulting in a coupled system of forward/backward time-dependent PDEs. The forward equation is given by \eqref{main_eq}, while the backward one is a nonlocal integro-differential advection-reaction-diffusion equation. The presence of nonlocal interaction terms in form of integral functions is another feature, which distinguishes  mean field optimal control problems from classical mean field games \cite{lali07} and optimal transport problems \cite{zbMATH01419740}, where usually $P \equiv 0$. The nonlocal terms pose additional challenges in the numerical solution, which are subject of recent studies \cite{CJ09}.
 
 Although  mean field optimal controls are designed to be independent of the number $N$ of agents to provide a way to circumvent the course of dimensionality of $N\to \infty$, still their numerical computation needs to be realized by solving the first-order optimality conditions. The complexity of their solution depends on the intrinsic dimensionality $d$ of the agents, which is affordable only at moderate dimensions (e.g., $d \leq 3$). For this reason, in Section \ref{sec:Boltz} we approach the solution of the mean field optimal control, by means of a novel hierarchy of suboptimal controls, computed by a Boltzmann approach: first one derives a control for a system of two representative particles, then one plugs it into a collisional operator considering the statistics of the interactions of a distribution of agents, and finally one performs a {\it quasi-invariant limit} to approximate the PDE of continuity-type, governing the dynamics of the probability distribution of the agent population.
For the two particle system considered in the first step of the Boltzmann approach above, we propose two suboptimal controls stemming from the binary Boltzmann approach:
 the first level is given by an instantaneous {\it model predictive control}  on two interacting agents - we shall call this control {\it instantaneous control} (IC) -, while the second stems from the solution of the binary optimal control problem by means of the Bellman dynamical programming  principle - we shall call this control {\it finite horizon control} (FH) - . These two controls have the advantage that the complexity of their computation is dramatically reduced with respect to the mean field optimal control (OC) in its full glory, still retaining their ability to induce government of the population. We describe in detail how they can be efficiently numerically computed.
 In Section \ref{sec:num} we  provide simple numerical approaches, easily implementable, for solving one-dimensional mean field optimal control problems of the type \eqref{main_eq}-\eqref{main_func}. We eventually numerically compare the control hierarchy with the mean field optimal control in a model of opinion formation and we show the quasi-optimality of the Boltzmann-Bellman (FH) control. 

%
%
%
%
%
\section{Existence of mean field optimal controls}\label{sec:existence}

%
%
%
%
%
\subsection{Deterministic case}
In this section, we study global existence and uniqueness of weak solutions for the equation \eqref{main_eq} in $\R^d$ without the diffusion, i.e., $\sigma=0$, namely
\begin{equation}\label{eq_det}
\pa_t \mrho + \nabla \cdot \lt(\lt(\QF[\mrho] + f\rt)\mrho \rt) = 0,\quad x \in \mathbb R^d, \quad t>0.
\end{equation}
We also investigate the mean field limit of the ODE constrained control problem \eqref{eq:HKu}-\eqref{eq:funcJu} in the deterministic setting. Let us denote by $\P(\R^d)$ and $\P_p(\R^d)$ the sets of all probability measures and the ones with finite moments of order $p \in [1,\infty)$ on $\R^d$, respectively. We first define a notion of weak solutions to the equation to \eqref{eq_det}.

\begin{defn}\label{def_weak0}For a given $T > 0$, we call $\mrho \in \mc([0,T];\P_1(\R^d))$ a weak solution of \eqref{eq_det} on the time-interval $[0,T]$ if for all compactly supported test functions $\varphi \in \mc^\infty_c(\R^d \times [0,T])$,
$$\begin{aligned}
\int_{\R^d} \varphi(x,T)\,\mrho_T(dx) - \int_0^T\int_{\R^d} \lt( \pa_t \varphi + \lt( \QF[\mu_t] + f \rt)\cdot \nabla \varphi\rt)\mrho_t(dx)dt = \int_{\R^d} \varphi_0(x)\,\mrho_0(dx).
\end{aligned}$$
\end{defn}
We also introduce a set of admissible controls $\F_\ell([0,T])$ in the definition below.
\begin{defn}For a given $T$ and $q \in [1,\infty)$, we fix a control bound function $\ell \in L^q(0,T)$. Then $f \in \F_\ell([0,T])$ if and only if
\begin{itemize}
\item[(i)]  $f : [0,T] \times \R^d \to \R^d$ is a Carath\'eodory function.
\item[(ii)] $f(\cdot,t) \in W^{1,\infty}_{loc}(\R^d)$ for almost every $t \in [0,T]$.
\item[(iii)] $|f(0,t)| + \|f(\cdot,t)\|_{\rm{Lip}} \leq \ell(t)$ for almost every $t \in [0,T]$. 
\end{itemize}
\end{defn}

For the existence and mean field limit, we use the topology on probability measures induced by the Wasserstein distance, which is defined by
\[
\W_p(\mu,\nu) := \inf_{\pi \in \Gamma(\mu,\nu)} \lt(\int_{\R^{2d}} |x - y|^p\,\pi(dx,dy) \rt)^{1/p} \quad \mbox{for} \quad p \geq 1 \,\,\mbox{ and }\,\, \mu,\nu \in \P(\R^d),
\]
where $\Gamma(\mu,\nu)$ is the set of all probability measures on $\R^{2d}$ with first and second marginals $\mu$ and $\nu$, respectively. Note that $\P_1(\R^d)$ is a complete metric space endowed with the $\W_1$ distance, and $\W_1$ is equivalently characterized in duality with Lipschitz continuous functions \cite{vi09}.\\

The following result is a rather straightforward adaptation from \cite{FS13} and we shall prove it rather concisely. For more details we address the interested reader to \cite{FS13}, which has been written in a more scholastic and perhaps accessible form.

\begin{thm}\label{thm_ext} Let the initial data $\mrho_0 \in \P_1(\R^d)$ and assume that $\mrho_0$ is compactly supported, i.e., there exists $R > 0$ such that
\[
supp \,\mrho_0 \subset B(0,R),
\]
where $B(0,R) := \{ x \in \R^d : |x| < R\}$. Furthermore, we assume that $P \in W^{1,\infty}(\R^{2d})$. Then, for a given $f \in \F_\ell([0,T])$, there exists a unique weak solution $\mrho \in \mc([0,T];\P_1(\R^d))$ to the equation \eqref{main_eq} with $\sigma=0$. Furthermore, $\mrho$ is determined as the push-forward of the initial measure $\mrho_0$ through the flow map generated by the locally Lipschitz velocity field $\QF[\mrho] + f$. Moreover, if $\mrho^i,i=1,2$ are two such with initial data $\mrho_0^i$ satisfying the above assumption, we have
\[
\W_1(\mrho^1_t,\mrho^2_t) \leq C\W_1(\mrho^1_0,\mrho^2_0) \quad \mbox{for} \quad t \in [0,T],
\]
where $C > 0$ depends only on $\|P\|_{W^{1,\infty}}$, $R$, $T $, and $\|\ell\|_{L^q}$.
\end{thm}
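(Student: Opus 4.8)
The plan is to follow the Dobrushin-type characteristics argument of \cite{FS13}: construct the weak solution of \eqref{eq_det} as the push-forward of $\mrho_0$ along the flow of the velocity field $\QF[\mrho]+f$, and then close the loop on $\mrho$ itself by a contraction mapping in the space of measure-valued curves metrized by $\W_1$.

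First I would record an a priori support estimate. Writing $m_1(\nu):=\int|y|\,\nu(dy)$, one has $|\QF[\nu_t](x)|\le \|P\|_{L^\infty}(|x|+m_1(\nu_t))$, and admissibility of $f$ gives $|f(x,t)|\le \ell(t)(1+|x|)$; hence along a characteristic $t\mapsto X(t)$ issued from $\mathrm{supp}\,\mrho_0\subset B(0,R)$ one obtains $\tfrac{d}{dt}|X(t)|\le (\|P\|_{L^\infty}+\ell(t))|X(t)| + \|P\|_{L^\infty}m_1(\nu_t)+\ell(t)$. Since for the curve we construct $\nu_t$ is the law of $X(t)$ one has $m_1(\nu_t)\le \sup_{X(0)\in\mathrm{supp}\,\mrho_0}|X(t)|$, so Gr\"onwall's inequality (using $\ell\in L^q(0,T)\subset L^1(0,T)$) yields a radius $R_T$ depending only on $R$, $\|P\|_{W^{1,\infty}}$, $T$ and $\|\ell\|_{L^q}$ with $\mathrm{supp}\,\nu_t\subset B(0,R_T)$ for all $t\in[0,T]$. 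This reduces matters to a setting in which $x\mapsto\QF[\nu_t](x)+f(x,t)$ is globally Lipschitz on $\overline{B(0,R_T)}$.

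Next, on the complete metric space $\mc([0,T];\P_1(\overline{B(0,R_T)}))$ I would define the map $\mt$ sending a curve $\nu$ to $t\mapsto (\Phi^\nu_t)_{\#}\mrho_0$, where $\Phi^\nu_t$ is the flow of the Carath\'eodory-in-$t$, locally-Lipschitz-in-$x$ field $x\mapsto\QF[\nu_t](x)+f(x,t)$, which is well posed by the Carath\'eodory existence theorem and the integrability built into admissibility. Differentiating $t\mapsto \int \varphi(\Phi^\nu_t(x),t)\,\mrho_0(dx)$ and inserting the ODE shows that a fixed point of $\mt$ is exactly a weak solution in the sense of Definition \ref{def_weak0}, while uniqueness of characteristics forces any compactly supported weak solution to be of this push-forward form, which gives uniqueness. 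The crux is the contraction estimate: for fixed $x\in \overline{B(0,R_T)}$ the map $y\mapsto P(x,y)(y-x)$ is Lipschitz on $\overline{B(0,R_T)}$ with constant controlled by $\|P\|_{W^{1,\infty}}$ and $R_T$, so Kantorovich--Rubinstein duality gives $|\QF[\nu_t](x)-\QF[\tilde\nu_t](x)|\le C\,\W_1(\nu_t,\tilde\nu_t)$. Substituting this into the standard Gr\"onwall stability estimate for flows with respect to their driving field gives $|\Phi^\nu_t(x)-\Phi^{\tilde\nu}_t(x)|\le C\int_0^t \W_1(\nu_s,\tilde\nu_s)\,ds$ on $\mathrm{supp}\,\mrho_0$; since $(\Phi^\nu_t,\Phi^{\tilde\nu}_t)_{\#}\mrho_0$ is an admissible coupling of $\mt(\nu)_t$ and $\mt(\tilde\nu)_t$, we conclude $\W_1(\mt(\nu)_t,\mt(\tilde\nu)_t)\le C\int_0^t\W_1(\nu_s,\tilde\nu_s)\,ds$. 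Iterating, $\mt^n$ has Lipschitz constant at most $(CT)^n/n!$ on $\mc([0,T];\W_1)$, so a power of $\mt$ is a contraction and Banach's fixed point theorem provides the unique solution $\mrho$.

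The stability statement follows by the same mechanism: given two such solutions $\mrho^1,\mrho^2$, choose an optimal plan $\pi_0\in\Gamma(\mrho_0^1,\mrho_0^2)$, transport it along $(\Phi^{\mrho^1}_t,\Phi^{\mrho^2}_t)$, and run the flow-stability Gr\"onwall argument once more, now also carrying the initial displacement $|x-y|$ inside the integral; together with $|\QF[\mrho^1_t](x)-\QF[\mrho^2_t](x)|\le C\,\W_1(\mrho^1_t,\mrho^2_t)$ this yields $\W_1(\mrho^1_t,\mrho^2_t)\le C\,\W_1(\mrho^1_0,\mrho^2_0)$ with $C$ of the asserted dependence. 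The only real (and fairly mild) obstacle throughout is the constant-bookkeeping in the Gr\"onwall steps: one must carry the merely time-integrable Lipschitz bound $\ell(\cdot)$ and the coupled support/moment estimate of the first step so that all constants end up depending on nothing beyond $\|P\|_{W^{1,\infty}}$, $R$, $T$ and $\|\ell\|_{L^q}$. Everything else is a direct transcription of \cite{FS13}.
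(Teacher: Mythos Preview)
Your proposal is correct and follows essentially the same Dobrushin-type characteristics argument as the paper: a priori confinement of the support via Gr\"onwall, construction of the solution as the push-forward along the Carath\'eodory flow (the paper cites \cite{CCR11,Fil} for the fixed-point step you write out explicitly), and a Gr\"onwall-type stability estimate in $\W_1$ obtained by transporting an optimal coupling along the two flows. The only cosmetic differences are that the paper bounds the support diameter rather than the radius from the origin, and phrases the stability step as a direct differential inequality for $\W_1(\mrho^1_t,\mrho^2_t)$ via the optimal map rather than via the transported plan; these are equivalent presentations of the same computation.
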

\begin{proof} $\bullet$ {\bf (Existence \& Uniqueness).-} Let $\mrho \in \mc([0,T];\P_1(\R^d))$ with compact support in $B(0,R)$ for some positive constant $R > 0$. Then we can easily show that the interaction force $\QF$ is locally bounded and Lipschitz:
\[
|\QF[\mrho](x)| \leq C(\|P\|_{L^\infty}, R)(1 + |x|),
\]
and
\[
|\QF[\mrho](x) - \QF[\mrho](y)| \leq C(\|P\|_{W^{1,\infty}},R)(1 + |x|)|x- y|.
\]
On the other hand, since  $f \in \F_\ell([0,T])$, we obtain that the vector field $\QF[\mrho] + f$ is also locally bounded and Lipschitz. Then this together with employing the argument in \cite[Theorem 3.10]{CCR11} and existence theory for Carath\'eodory differential equation in \cite{Fil}, we can get the local-in-time existence and uniqueness of weak solutions to the system \eqref{main_eq} with $\sigma=0$ in the sense of Definition \ref{def_weak0}. Note that those solutions exist as long as that solutions are compactly supported. Set 
\[
R(t) := \max_{x,y \in \overline{\mbox{\small supp}(\mrho_t)}}|x - y| \quad \mbox{for} \quad t \in [0,T].
\]
Let us consider the following characteristic $X(t):=X(t;s,x): \R_+ \times \R_+ \times \R^d \to \R^d$:
\bq\label{eq_char}
\frac{d X(t;s,x)}{dt} = \QF[\mrho_t](X(t;s,x),t) + f(X(t;s,x),t) \quad \mbox{for all} \quad t,s \in [0,T],
\eq
with the initial data $X_0 = x \in \R^d$. We notice that characteristic is well-defined on the time interval $[0,T]$ due to the regularity of the velocity field. A straightforward computation yields that for $x,y \in$ supp$(\mrho_0)$
$$\begin{aligned}
&\frac{ d|X(t) - Y(t)|^2}{dt} \cr
&\quad = (X(t) - Y(t)) \cdot \frac{d \lt( X(t) - Y(t)\rt)}{dt}\cr
&\quad \leq |X(t) - Y(t)|\lt| \QF[\mrho_t](X(t),t) - \QF[\mrho_t](Y(t),t)\rt| + |X(t) - Y(t)||f(X(t),t) - f(Y(t),t)|\cr
&\quad \leq  2\|P\|_{L^\infty}|X(t) - Y(t)| \int_{\R^d} |z - X(t)|\mrho(z,t)\,dz + \|P\|_{L^\infty}|X(t) - Y(t)|^2\cr
&\qquad + \|f(\cdot,t)\|_{\rm{Lip}}|X(t) - Y(t)|^2.
\end{aligned}$$
This deduces
\[
\frac{d R(t)}{dt} \leq \lt( 3\|P\|_{L^\infty} + \|f(\cdot,t)\|_{\rm{Lip}}\rt)R(t) \leq \lt( 3\|P\|_{L^\infty} + \ell(t) \rt)R(t),
\]
and
\[
R(t) \leq CR_0 \quad \mbox{for} \quad t \in [0,T],
\]
where $C$ depends only on $T$, $\|P\|_{L^\infty}$, and $\|\ell\|_{L^q}$. Thus, by continuity arguments, we have the global existence of weak solutions. We can also find that for $h \in \mc^\infty_c(\R^d)$
\[
\int_{\R^d} \mrho(x,t)h(x)\,dx = \int_{\R^d} \mrho_0(x) h(X(0;t,x))\,dx \quad \mbox{for} \quad t \in [0,T].
\]
This implies that $\mrho$ is determined as the push-forward of the initial density through the flow map \eqref{eq_char}.

$\bullet$ {\bf (Stability estimate).-} Let $T>0$ and $\mrho^i,i=1,2$ be the weak solutions to the equation \eqref{main_eq} with $\sigma = 0$ obtained in the above. Let $X_i$ be the characteristic flows defined in \eqref{eq_char} generated by the velocity fields $\QF[\mrho^i] + f$, respectively. For a fixed $t_0 \in [0,T]$, we choose an optimal transport map for $\W_1$ denoted by $\mt^0(x)$ between $\mrho^1_{t_0}$ and $\mrho^2_{t_0}$, i.e., $\mrho^2_{t_0} = \mt^0 \# \mrho^1_{t_0}$. It also follows from the above that $\mrho^i_t = X_i(t;t_0,\cdot) \# \mrho^i_{t_0}$ for $t \geq t_0$. Furthermore, we get $\mt^t \# \mrho^1_t = \mrho^2_t$ with $\mt^t = X_2(t;t_0,\cdot,\cdot) \circ \mt^0 \circ X_1(t_0;t,\cdot)$ for $t \in [t_0,T]$. Then we obtain
$$\begin{aligned}
\frac{d^+ \W_1(\mrho^1_t,\mrho^2_t)}{dt}\Big|_{t = t_0+} &\leq \int_{\R^d} \lt|\QF[\mrho^1_{t_0}](X_1(t;t_0,x),t) - \QF[\mrho^2_{t_0}](X_2(t;t_0,\mt^0(x)),t) \rt| \mrho^1_{t_0}(dx)\Big|_{t = t_0+}\cr
&\quad + \int_{\R^d} \lt|f(X_1(t;t_0,x),t)- f(X_2(t;t_0,\mt^0(x)),t) \rt| \mrho^1_{t_0}(dx)\Big|_{t = t_0+}\cr
&=I_1 + I_2,
\end{aligned}$$
where $I_i,i=1,2$ are estimated as follows.
$$\begin{aligned}
I_1 &\leq \int_{\R^{2d}} \lt|P(x,y)(y-x) - P(\mt^0(x),\mt^0(y))(\mt^0(y) - \mt^0(x)) \rt|\mrho^1_{t_0}(dx)\mrho^1_{t_0}(dy) \cr
&\leq \int_{\R^{2d}}|P(x,y) - P(\mt^0(x),\mt^0(y))||y-x|\mrho^1_{t_0}(dx)\mrho^1_{t_0}(dy)  \cr
&\quad + \int_{\R^{2d}}|P(\mt^0(x),\mt^0(y))|\lt(|y-\mt^0(y)| + | x - \mt^0(x)| \rt)\mrho^1_{t_0}(dx)\mrho^1_{t_0}(dy) \cr
&\leq C\|P\|_{W^{1,\infty}}\W_1(\mrho^1_{t_0},\mrho^2_{t_0}), \cr
I_2 & = \int_{\R^d} \lt|f(x,t) - f(\mt^0(x),t) \rt|\mrho^1_{t_0}(dx) \leq \|f(\cdot,t)\|_{\rm{Lip}}\W_1(\mrho^1_{t_0},\mrho^2_{t_0}) \leq \ell(t)\W_1(\mrho^1_{t_0},\mrho^2_{t_0}),
\end{aligned}$$
where we used the fact that $\mrho$ has the compact support for the estimate of $I_1$. We now combine the above estimates together with being $t_0$ arbitrary in $[0,T]$ to conclude 
\[
\frac{d^+ \W_1(\mrho^1_t,\mrho^2_t)}{dt} \leq C\lt(\|P\|_{W^{1,\infty}} + \ell(t) \rt)\W_1(\mrho^1_t,\mrho^2_t), \quad \mbox{for} \quad t \in [0,T].
\]
This completes the proof.
\end{proof}
In Theorem \ref{thm_ext}, we show the global existence and uniqueness of weak solutions $\mrho$ to the equation \eqref{main_eq} with $\sigma = 0$ for a given control $f \in \F_\ell([0,T])$. In the rest of this part, we show the rigorous derivation of the infinite dimensional optimal control problem from the finite dimensional one as $N \to \infty$. Let us recall the finite/infinite dimensional optimal control problems:

\begin{itemize}
\item {\it Finite dimensional optimal control problem:}
\bq\label{cost_finite}
\min_{f \in \F_\ell}J(x,f):= \min_{f \in \F_\ell}\int_0^T\frac{1}{N}\sum_{i=1}^N\left(\frac{1}{2}|x_i-x_d|^2 + \gamma \CLambda(f_i)\right)\,dt,
\eq
where $x_i$ is a unique solution of
\bq\label{eq_finite}
\dot{x}_i =\frac{1}{N}\sum_{j=1}^N P(x_i,x_j)(x_j-x_i) + f_i, \qquad i=1,\cdots,N, \quad t > 0,
\eq
\item {\it Infinite dimensional optimal control problem:}
\bq\label{cost_inf}
\min_{f \in \F_\ell}J(\mrho_t,f):= \min_{f \in \F_\ell}\int_0^T\left(\frac{1}{2}\int_{\R^d} |x-x_d|^2 \,\mrho_t(dx) + \gamma\int_{\R^d} \CLambda(f) \,\mrho_t(dx) \right)\,dt,
\eq
where $\mrho \in \mc([0,T];\P_1(\R^d))$ is a unique weak solution of 
\begin{align}\label{eq_inf}
\begin{aligned}
\pa_t \mrho_t &= \nabla \cdot \lt( \lt(\QF[\mrho_t] + f\rt)\mrho_t \rt), \quad (x,t) \in \R^d \times [0,T],\cr
\QF[\mrho_t](x) &= \int_{\R^d} P(x,y)(y-x)\mrho_t(dy).
\end{aligned}
\end{align}
\end{itemize}
For the convergence from \eqref{cost_finite}-\eqref{eq_finite} to \eqref{cost_inf}-\eqref{eq_inf}, we need a weak compactness result in $\F_\ell$ whose proof can be found in \cite[Corollary 2.7]{FS13}.
\begin{lem}\label{lem_comp_f}Let $p \in (1,\infty)$. Suppose that $(f_j)_{j \in \N} \in \F_\ell$ with $\ell \in L^q(0,T)$ for $1 \leq q < \infty$. Then there exists a subsequence $(f_{j_k})_{k \in N}$ and a function $f \in \F_\ell$ such that
\bq\label{con_f}
f_{j_k} \rightharpoonup f \quad \mbox{weakly* in } L^q(0,T;W^{1,p}(\R^d))\quad \mbox{as} \quad k \to \infty,
\eq
i.e.,
\[
\lim_{k \to \infty}\int_0^T \int_{\R^d} \phi(x,t)(f_{j_k}(x,t) - f(x,t))\,dxdt = 0 \quad \mbox{for all} \quad \phi \in L^{q'}(0,T;W^{-1,p'}(\R^d)).
\]
\end{lem}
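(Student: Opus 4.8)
The statement to be proved is Lemma \ref{lem_comp_f}, a weak-$*$ compactness result for the admissible control class $\F_\ell([0,T])$. The plan is to exploit the three defining properties of $\F_\ell$ — in particular the pointwise-in-time bound $|f(0,t)| + \|f(\cdot,t)\|_{\rm{Lip}} \leq \ell(t)$ with $\ell \in L^q(0,T)$ — to obtain uniform bounds in a reflexive (or separable-dual) Bochner space, extract a weakly-$*$ convergent subsequence by Banach--Alaoglu, and then check that the limit still belongs to $\F_\ell$.

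First I would set up the functional-analytic framework. Fix a large ball $B_\rho = B(0,\rho) \subset \R^d$. For each $j$ and a.e. $t$, the bound (iii) together with (ii) gives $\|f_j(\cdot,t)\|_{W^{1,\infty}(B_\rho)} \leq C(\rho)\,\ell(t)$: indeed $|f_j(x,t)| \leq |f_j(0,t)| + \|f_j(\cdot,t)\|_{\rm{Lip}}|x| \leq (1+\rho)\ell(t)$, and the weak gradient is bounded by $\|f_j(\cdot,t)\|_{\rm{Lip}} \leq \ell(t)$. Since $B_\rho$ is bounded, this yields $\|f_j(\cdot,t)\|_{W^{1,p}(B_\rho)} \leq C(\rho,p)\ell(t)$ for every $p \in (1,\infty)$, hence $\|f_j\|_{L^q(0,T;W^{1,p}(B_\rho))} \leq C(\rho,p)\|\ell\|_{L^q}$, a uniform bound. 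Because $1 < q < \infty$ and $W^{1,p}(B_\rho)$ is reflexive (as $1<p<\infty$), the space $L^q(0,T;W^{1,p}(B_\rho))$ is reflexive, so by Banach--Alaoglu / Eberlein--Šmulian there is a subsequence converging weakly in this space. To handle all of $\R^d$ at once I would take an exhausting sequence of balls $B_{\rho_m}$, diagonalize over $m$, and thereby get a single subsequence $(f_{j_k})$ and a limit $f$ with $f_{j_k} \rightharpoonup f$ weakly in $L^q(0,T;W^{1,p}(B_{\rho_m}))$ for every $m$; this is exactly the local weak-$*$ statement \eqref{con_f} tested against $\phi \in L^{q'}(0,T;W^{-1,p'}(\R^d))$ with bounded support, and a density argument extends it to all such $\phi$.

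The remaining, and genuinely nontrivial, step is to verify that the limit $f$ lies in $\F_\ell([0,T])$ — i.e. that the three structural constraints are stable under this weak convergence. The Lipschitz and linear-growth bounds survive because they are expressed by convex, weakly (lower semi)continuous functionals: for fixed $t$ in a set of full measure one shows, via lower semicontinuity of the $W^{1,\infty}$-type seminorm under weak $W^{1,p}$-convergence on each ball (or by testing the gradient bound against arbitrary $L^{p'}$ functions), that $\|f(\cdot,t)\|_{\rm{Lip}} + |f(0,t)| \leq \ell(t)$; making this rigorous for a.e. $t$ simultaneously requires a Mazur-lemma / Fatou-in-$t$ argument, or one can invoke the measurable-selection machinery of \cite{FS13}. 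The Carathéodory property (measurability in $t$, continuity in $x$) of $f$ then follows since the pointwise bounds force $f(\cdot,t) \in W^{1,\infty}_{loc}$, hence locally Lipschitz hence continuous in $x$, while measurability in $t$ is inherited from the Bochner-space convergence. I expect this last verification that $f \in \F_\ell$ — reconciling the "for a.e. $t$" pointwise constraints with the weak convergence, which a priori only controls space-time averages — to be the main obstacle; it is precisely the point where one leans on \cite[Corollary 2.7]{FS13}, and I would either reproduce that argument or cite it, as the excerpt does.
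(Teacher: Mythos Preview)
The paper does not give its own proof of this lemma: it simply states that the proof ``can be found in \cite[Corollary 2.7]{FS13}'' and moves on. Your outline is precisely the standard argument one expects to find there --- uniform $L^q(0,T;W^{1,p}(B_\rho))$ bounds from property (iii), weak compactness via reflexivity, diagonalization over an exhaustion by balls, and then closure of $\F_\ell$ under the weak limit via Mazur/lower-semicontinuity --- so there is nothing to compare against in the present paper, and your approach is correct in substance.

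One small gap worth flagging: you invoke reflexivity of $L^q(0,T;W^{1,p}(B_\rho))$ with the parenthetical ``Because $1<q<\infty$'', but the lemma as stated allows $q=1$, where $L^1(0,T;X)$ is not reflexive. This is easily patched --- the pointwise domination $\|f_j(\cdot,t)\|_{W^{1,p}(B_\rho)} \le C(\rho,p)\ell(t)$ with $\ell\in L^1$ gives equi-integrability, so Dunford--Pettis yields weak $L^1$ compactness --- but as written your argument does not cover that endpoint. Otherwise your identification of the a.e.-in-$t$ recovery of the bound (iii) as the genuinely delicate step, and your proposed Mazur/Fatou route to it, are on target.
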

Define the empirical measure $\mu^N$ associated to the particle system \eqref{eq_finite} as
\[
\mu^N_t := \frac1N \sum_{i=1}^N \delta_{x_i(t)} \quad \mbox{for} \quad t \geq 0.
\]
Then we are now in a position to state our theorem on the mean field limit of the optimal control problem.

\begin{thm} Let $T >0$. Suppose that $P \in W^{1,\infty}(\R^{2d})$ and $\CLambda$ satisfies that there exist $C \geq 0$ and $1 \leq q < \infty$ 
\[
Lip(\CLambda, B(0,R)) \leq CR^{q-1} \quad \mbox{for all} \quad R > 0.
\]
Let $\ell(t)$ be a fixed function in $L^q(0,T)$. Furthermore we assume that $\{x_i^0\}_{i=1}^N \subset B(0,R_0)$ for $R_0 > 0$ independent of $N$. For all $N \in \N$, let us denote the control function $f_N \in \F_\ell$ as a solution of the finite dimensional optimal control problem \eqref{cost_finite}-\eqref{eq_finite}. If there exits a compactly supported initial data $\mrho_0 \in \P_1(\R^d)$ such that $\lim_{N \to \infty}\W_1(\mu_0^N, \mrho_0)$, then there exists a subsequence $(f^{N_k}_t)_{k \in \N}$ and a function $f^\infty_t$ such that $f^{N_k}_t \to f^\infty_t$ in the sense of \eqref{con_f}. Moreover, $f^\infty_t$ and the corresponding $\mu^\infty_t$ are  solutions of the infinite dimensional optimal control problem \eqref{cost_inf}-\eqref{eq_inf}.
\end{thm}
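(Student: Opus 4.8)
The plan is to argue by the direct method, i.e.\ to prove a $\Gamma$-convergence-type statement: extract a limit $f^\infty$ of the optimal controls together with a limit $\mu^\infty$ of the associated empirical trajectories, identify $\mu^\infty$ as the mean field solution of \eqref{eq_inf} driven by $f^\infty$, and then verify optimality by a lower semicontinuity inequality combined with a comparison against arbitrary competitors. Concretely, since every $f^N\in\F_\ell$ shares the bound $\ell\in L^q(0,T)$, Lemma~\ref{lem_comp_f} gives a subsequence with $f^{N_k}\rightharpoonup f^\infty$ weakly* in $L^q(0,T;W^{1,p}(\R^d))$ and $f^\infty\in\F_\ell$; I would fix $p>d$ once and for all, so that $W^{1,p}(\R^d)\hookrightarrow C_0(\R^d)$ and, dually, every compactly supported finite measure lies in $W^{-1,p'}(\R^d)$. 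Running \eqref{eq_finite} with control $f^{N_k}$ and revisiting the a priori bound inside the proof of Theorem~\ref{thm_ext} --- whose constant depends only on $\|P\|_{L^\infty}$, $\|\ell\|_{L^q}$, $T$ and $R_0$, not on $N_k$ nor on the control --- all the empirical measures $\mu^{N_k}_t$ are supported in a common ball $B(0,\bar R)$ for $t\in[0,T]$, and $\W_1(\mu^{N_k}_t,\mu^{N_k}_s)\lesssim\int_s^t(1+\ell(\tau))\,d\tau$. Hence $\{t\mapsto\mu^{N_k}_t\}_k$ is equicontinuous with values in the compact metric space $(\P_1(\overline{B(0,\bar R)}),\W_1)$, and by Ascoli--Arzel\`a, along a further subsequence, $\mu^{N_k}_t\to\mu^\infty_t$ in $\W_1$ uniformly in $t$, with $\mu^\infty\in\mc([0,T];\P_1(\R^d))$, $\mathrm{supp}\,\mu^\infty_t\subset B(0,\bar R)$, and $\mu^\infty_0=\mrho_0$.

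Next I would identify $\mu^\infty$. Each $\mu^{N_k}$ is a weak solution of \eqref{eq_inf} with control $f^{N_k}$ in the sense of Definition~\ref{def_weak0} (immediate from \eqref{eq_finite}), and I would pass to the limit in that weak formulation. The terminal, initial and $\pa_t\varphi$ terms pass by $\W_1$-convergence on the fixed compact support; the interaction term passes because $\mu\mapsto\int\QF[\mu]\cdot\nabla\varphi\,d\mu$ is $\W_1$-continuous on measures supported in $B(0,\bar R)$ ($P$ being Lipschitz). For the control term I would write $\int_0^T\!\int f^{N_k}\cdot\nabla\varphi\,\mu^{N_k}_t(dx)\,dt$ as $\int_0^T\!\int f^{N_k}\cdot\nabla\varphi\,(\mu^{N_k}_t-\mu^\infty_t)(dx)\,dt+\int_0^T\!\int f^{N_k}\cdot\nabla\varphi\,\mu^\infty_t(dx)\,dt$: the first integral is $\lesssim\int_0^T\ell(t)\,\W_1(\mu^{N_k}_t,\mu^\infty_t)\,dt\to0$, since $f^{N_k}(\cdot,t)\cdot\nabla\varphi(\cdot,t)$ is Lipschitz on $B(0,\bar R)$ with constant $\lesssim\ell(t)$; the second is the pairing of $f^{N_k}$ with the fixed element $\nabla\varphi\,\mu^\infty_t$ of $L^{q'}(0,T;W^{-1,p'}(\R^d))$ and hence converges to the analogous expression with $f^\infty$. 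Therefore $\mu^\infty$ solves \eqref{eq_inf} weakly with control $f^\infty$, and by the uniqueness in Theorem~\ref{thm_ext} it is exactly the $\mu^\infty_t$ associated with $f^\infty_t$.

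Finally I would prove optimality. Let $g\in\F_\ell$ be an arbitrary competitor with associated solution $\mrho^g$ of \eqref{eq_inf}. Running \eqref{eq_finite} with control $g$ and the same initial data $\{x_i^0\}$ yields empirical measures $\nu^{N_k,g}_t$ which are weak solutions of \eqref{eq_inf} with the \emph{same} control $g$, so the stability estimate of Theorem~\ref{thm_ext} gives $\W_1(\nu^{N_k,g}_t,\mrho^g_t)\le C\,\W_1(\mu^{N_k}_0,\mrho_0)\to0$ uniformly in $t$; moreover the growth hypothesis $\mathrm{Lip}(\CLambda,B(0,R))\le CR^{q-1}$ and the bound $|g(x,t)|\le\ell(t)(1+\bar R)$ on the (uniformly compact) supports make $\CLambda(g(\cdot,t))$ dominated by an $L^1(0,T)$ function, so $J(\nu^{N_k,g},g)\to J(\mrho^g,g)$ by dominated convergence. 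On the other hand, $J(\mu^{N_k},f^{N_k})$ equals the finite-dimensional cost $J(x^{N_k},f^{N_k})$ ($f^{N_k}$ being a feedback control and $\mu^{N_k}_t$ the associated empirical measure), its tracking part converges to that of $J(\mu^\infty,f^\infty)$ by $\W_1$-convergence plus dominated convergence, and $\liminf_k\int_0^T\!\int\CLambda(f^{N_k})\,\mu^{N_k}_t(dx)\,dt\ge\int_0^T\!\int\CLambda(f^\infty)\,\mu^\infty_t(dx)\,dt$ by lower semicontinuity of convex integral functionals with respect to the weak* convergence of the controls and the $\W_1$-convergence of the weights $\mu^{N_k}_t$ (convexity of $\CLambda$, together with $p>d$; cf.\ \cite{FS13}). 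Chaining these with the optimality of $f^{N_k}$ for the $N_k$-particle problem, $J(\mu^\infty,f^\infty)\le\liminf_k J(\mu^{N_k},f^{N_k})\le\liminf_k J(\nu^{N_k,g},g)=J(\mrho^g,g)$, and since $g$ was arbitrary, $(\mu^\infty_t,f^\infty_t)$ solves \eqref{cost_inf}--\eqref{eq_inf}.

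The step I expect to be the main obstacle is every point at which the control enters nonlinearly, because Lemma~\ref{lem_comp_f} only provides \emph{weak*} convergence of the $f^{N_k}$: neither the transport product $f^{N_k}\mu^{N_k}_t$ in the equation nor the penalization $\CLambda(f^{N_k})$ in the cost can be passed to the limit termwise. The former is treated by exploiting the product structure --- weak*-convergent $f^{N_k}$ tested against $\W_1$-strongly convergent $\mu^{N_k}_t$, the uniform Lipschitz bound on $f^{N_k}$ converting $\W_1$-smallness into operator-norm smallness in $W^{-1,p'}$ --- while the latter requires convexity of $\CLambda$ and a lower semicontinuity theorem for convex integrands under varying reference measures. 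A subsidiary but essential technicality is the choice $p>d$: it is what makes the compactly supported measures arising as dual test objects admissible elements of $W^{-1,p'}(\R^d)$, the space in which Lemma~\ref{lem_comp_f} asserts weak* convergence.
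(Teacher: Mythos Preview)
Your proposal is correct and follows the same overall strategy as the paper: extract a weak* limit $f^\infty$ of the controls via Lemma~\ref{lem_comp_f}, identify the corresponding limit trajectory $\mu^\infty$, invoke lower semicontinuity of the cost to get $\liminf_k J(\mu^{N_k},f^{N_k})\ge J(\mu^\infty,f^\infty)$, and compare against an arbitrary competitor $g\in\F_\ell$ via the stability estimate of Theorem~\ref{thm_ext}.

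The one notable difference is in how $\mu^\infty$ is obtained. The paper simply \emph{defines} $\mu^\infty$ as the unique solution of \eqref{eq_inf} with control $f^\infty$ (existence and uniqueness being guaranteed by Theorem~\ref{thm_ext}), and then appeals to the lower semicontinuity of the cost functional as a black box from \cite{FS13}, which implicitly contains the convergence $\mu^{N_k}_t\to\mu^\infty_t$. You instead \emph{construct} $\mu^\infty$ directly: you use the uniform support bound and equicontinuity to apply Ascoli--Arzel\`a, pass to the limit termwise in the weak formulation (splitting the control term into a ``strong times bounded'' and a ``weak* against fixed'' piece), and only then invoke uniqueness to match with the solution driven by $f^\infty$. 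Your route is more self-contained and makes explicit precisely the step the paper outsources; your observation that one should fix $p>d$ so that compactly supported measures embed into $W^{-1,p'}(\R^d)$ is exactly the technical point needed to make the weak* pairing in Lemma~\ref{lem_comp_f} do the work, and it is left implicit in the paper's write-up.
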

\begin{proof} We first notice that the existence of an optimal control $f^N_t$ on the time interval $[0,T]$ for the finite dimensional optimal problem \eqref{cost_finite}-\eqref{eq_finite} can be obtained by using the weak compactness estimate in Lemma \ref{lem_comp_f} together with the strong regularity of velocity field $\mathcal P + f$, see \cite[Theorem 3.3]{FS13}. For any $f \in \F_\ell([0,T])$, let us denote $(\mu_f)^N_t$ by the solution to the equation \eqref{eq_finite} with the initial data $(\mu_f)_0^N$ satisfying $\lim_{N \to \infty} \W_1((\mu_f)_0^N,\mrho_0) = 0$. Let denote also by $\mrho^{f_t}_t$ is a solution associated to \eqref{eq_inf} with the control $f_t$ and that initial data $\mrho_0$, which is ensured by Theorem \ref{thm_ext}. Morevoer, by Theorem \ref{thm_ext}, $\lim_{N \to \infty} \W_1((\mu_f)_t^N, \mrho_t^{f_t}) = 0$. On the other hand, it follows from Lemma \ref{lem_comp_f} that there exists a subsequence $f^{N_k}_t$ such that $f^{N_k}_t \rightharpoonup f^\infty_t$ weakly* in $L^q(0,T;W^{1,p}(\R^d))$ as $k \to \infty$ for some $f^\infty_t \in \F_\ell$. Let $\mrho^{\infty}_t$ is the solution to \eqref{eq_inf} with the control function $f^\infty_t$. Then, by the lower-semicontinuity of the onset functional, we get
\[
\liminf_{k \to \infty}J\lt(\mu_t^{N_k}, f^{N_k}_t\rt) \geq J(\mrho^\infty_t,f^\infty_t),
\]
where $\mu^{N_k}_t$ is a solution to the particle equation \eqref{eq_finite} with the optimal control $f^{N_k}_t$. Then, due to the minimality of $f^{N_k}_t$, it is clear that 
\[
J\lt((\mu_f)_t^{N_k}, f_t\rt) \geq J\lt(\mu_t^{N_k}, f^{N_k}_t\rt) \quad \mbox{for each} \quad k \in \N.
\]
We finally use the convergence of $\lim_{k \to \infty} \W_1((\mu_f)_t^{N_k}, \mrho_t^f) = 0$ together with the compactly supported solution $\mrho_t$ to have
\[
J(\mrho_t^{f_t}, f_t)= \lim_{k \to \infty} J\lt((\mu_f)_t^{N_k}, f_t\rt) \geq \liminf_{k \to \infty}J\lt(\mu_t^{N_k}, f^{N_k}_t\rt) \geq J(\mrho^\infty_t,f^\infty_t).
\]
Since $f_t$ is arbitrarily chosen in $\F_\ell([0,T])$, this concludes  
\[
\min_{f_t \in \F_\ell} J(\mrho_t,f_t)= J(\mrho^\infty_t,f^\infty_t),
\]
i.e., $f^\infty_t$ is the optimal control for the problem \eqref{cost_inf}-\eqref{eq_inf}.
\end{proof}

%
%
%
%
%

\subsection{Stochastic case}
In this section, we study the parabolic optimal control problem in a bounded domain. 
In this section we are to a certain extent inspired by the work \cite{BFMW14}. As we are deviating from that in certain estimates, we take the burden somehow of presenting the results in more details than in the previous section. \\

Let $\om$ denote an open, bounded, smooth subset of $\R^d$.  We first introduce function spaces: 
\[
V:= L^2(0,T; H^1(\om)) \cap \dot{H}^1(0,T; H^{-1}_*(\om)), \quad \mbox{and} \quad H^{-1}_*(\om) = H^1(\om)',
\]
and the set of admissible controls
\[
Q_M := \lt\{ \|f\|_{L^2(0,T; L^\infty(\om))} \leq M \,:\,f \in L^2(0,T; L^\infty(\om))\rt\},
\]
for a given $M>0$.
Then our optimization problem is to show the existence of
\bq\label{eqn_mini}
\min_{f \in Q_M} J(\mrho,f) := \min_{ f \in Q_M}\int_0^T\left(\frac{1}{2}\int_\om |x-x_d|^2\mrho(x,t)\,dx + \gamma\int_\om \CLambda(f)\mrho(x,t)\,dx\right)\,dt,
\eq
where $\mrho$ is a weak solution to the following parabolic equation:
\bq\label{eqn_diff}
\pa_t \mrho  + \nabla \cdot (\QF[\mrho]\mrho + f\mrho) = \sigma\Delta \mrho, \quad (x,t) \in \om_T:=\om \times [0,T],
\eq
with the initial data
\[
\mrho(\cdot, 0) = \mrho_0(x) \quad x \in \om,
\]
and the zero-flux boundary condition
\[
\lt\lal \sigma\nabla \mrho - (\QF[\mrho] + f)\mrho, \,n(x) \rt\ral = 0, \quad (x,t) \in \pa \om \times [0,T],
\]
where $n(x)$ is the outward normal to $\pa \om$ at the point $x \in \pa \om$. Here the interaction term is given by
\[
\QF[\mrho](x,t) = \int_\om P(x,y)(y-x)\mrho(y,t)\,dy.
\]
We next provide a notion of weak solution to the equation \eqref{eqn_diff}.
\begin{defn}\label{def_weak} For a given $T >0$, a function $\mrho : \om_T \to [0,\infty)$ is a weak solution of the equation \eqref{eqn_diff} on the time-interval $[0,T]$ if and only if 
\begin{enumerate}
\item $\mrho \in L^2(0,T;H^1(\om))$ and $\pa_t \mrho \in L^2(0,T; H^{-1}_*(\om))$.
\item For any $\varphi \in L^2(0,T;H^1(\om))$, 
\[
\int_0^T \lal\pa_t \mu, \varphi \ral_{H^{-1}_* \times H^1}\,dt
- \int_0^T \int_\om \lt( \QF[\mrho] \mrho + f \mrho - \sigma\nabla \mrho \rt)\cdot \nabla \varphi\,dx dt = 0
\]
\end{enumerate}
\end{defn}

\begin{thm}\label{thm_weak} For a given $T, M >0$, let $f \in Q_M$ and $\mrho_0 \in L^2(\om)$. Furthermore, we assume $P \in L^\infty(\om^2)$. Then there exists a unique weak solution $\mrho$ to the equation \eqref{eqn_diff} in the sense of Definition \ref{def_weak}.
\end{thm}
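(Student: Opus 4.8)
\emph{Proof strategy.} The plan is to read \eqref{eqn_diff} as a linear parabolic equation with a nonlocal, $\mrho$-dependent velocity field and to close the loop by a fixed-point argument, in the spirit of \cite{BFMW14}. Although $\QF[\mrho]$ is linear in $\mrho$, the term $\QF[\mrho]\mrho$ is quadratic, so the equation is genuinely nonlinear; on the other hand, since $\om$ is bounded and $P\in L^\infty(\om^2)$, for any $\bar\mrho\in\mc([0,T];L^2(\om))$ one has the pointwise bound
\[
\|\QF[\bar\mrho](\cdot,t)\|_{L^\infty(\om)}\le\|P\|_{L^\infty(\om^2)}\,(\operatorname{diam}\om)\,\|\bar\mrho(\cdot,t)\|_{L^1(\om)}\le C(\om,P)\,\|\bar\mrho(\cdot,t)\|_{L^2(\om)},
\]
so that, together with $f\in Q_M$, the frozen field $b:=\QF[\bar\mrho]+f$ lies in $L^2(0,T;L^\infty(\om))$. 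First I would solve, for such a fixed $b$, the \emph{linear} zero-flux Cauchy problem $\pa_t\mrho+\nabla\cdot(b\mrho)=\sigma\Delta\mrho$ on $\om_T$ with $\mrho(\cdot,0)=\mrho_0$, in the weak sense of Definition~\ref{def_weak} (with $\QF[\mrho]$ replaced by the given $\QF[\bar\mrho]$), and then iterate the map $\bar\mrho\mapsto\mrho=:\mathcal S(\bar\mrho)$.

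For the linear problem I would use a Faedo--Galerkin scheme in the eigenbasis of the Neumann Laplacian on $\om$ (retaining the constant function, which makes the scheme mass-conserving and is compatible with the zero-flux condition). Testing the $m$-th approximant $\mrho_m$ with itself gives
\[
\tfrac12\tfrac{d}{dt}\|\mrho_m\|_{L^2(\om)}^2+\sigma\|\nabla\mrho_m\|_{L^2(\om)}^2=\int_\om b\,\mrho_m\cdot\nabla\mrho_m\le\tfrac{\sigma}{2}\|\nabla\mrho_m\|_{L^2(\om)}^2+\tfrac{1}{2\sigma}\|b(\cdot,t)\|_{L^\infty(\om)}^2\|\mrho_m\|_{L^2(\om)}^2,
\]
and since $t\mapsto\|b(\cdot,t)\|_{L^\infty}^2\in L^1(0,T)$, Grönwall's lemma bounds $\mrho_m$ in $L^\infty(0,T;L^2(\om))\cap L^2(0,T;H^1(\om))$ in terms of $\|\mrho_0\|_{L^2}$, $\sigma$ and $\|b\|_{L^2(0,T;L^\infty)}$ only; reading $\pa_t\mrho_m$ off the equation bounds it in $L^2(0,T;H^{-1}_*(\om))$. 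Passing to a weak-$*$ limit (no strong compactness is needed since the frozen equation is linear in $\mrho$) yields a weak solution $\mrho\in V$, and the standard embedding $V\hookrightarrow\mc([0,T];L^2(\om))$ makes the attainment of the initial datum meaningful. Uniqueness for the linear problem follows by testing the equation for the difference of two solutions with the difference itself and applying Grönwall. This makes $\mathcal S$ well defined on $\mc([0,T];L^2(\om))$.

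To obtain a local solution I would localise in time and contract. Put $R:=2\|\mrho_0\|_{L^2(\om)}$ and work on $X_{T_0}:=\mc([0,T_0];L^2(\om))$. If $\bar\mrho$ lies in the closed ball $\overline B_R\subset X_{T_0}$, the energy estimate gives $\sup_{t\le T_0}\|\mathcal S(\bar\mrho)(\cdot,t)\|_{L^2}^2\le\|\mrho_0\|_{L^2}^2\exp\bigl(\tfrac1\sigma\int_0^{T_0}(CR+\|f(\cdot,s)\|_{L^\infty})^2\,ds\bigr)$, which is $\le R^2$ once $T_0$ is small (depending on $R$, $\sigma$, $\|f\|_{L^2(0,T;L^\infty)}$); hence $\mathcal S(\overline B_R)\subset\overline B_R$. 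For $\bar\mrho_1,\bar\mrho_2\in\overline B_R$, the difference of the two solutions solves a linear equation with zero data and source governed by $\bigl(\QF[\bar\mrho_1]-\QF[\bar\mrho_2]\bigr)\mathcal S(\bar\mrho_1)=\QF[\bar\mrho_1-\bar\mrho_2]\,\mathcal S(\bar\mrho_1)$; using $\|\QF[\bar\mrho_1-\bar\mrho_2](\cdot,t)\|_{L^\infty}\le C\|\bar\mrho_1(\cdot,t)-\bar\mrho_2(\cdot,t)\|_{L^2}$, the same Grönwall argument gives $\|\mathcal S(\bar\mrho_1)-\mathcal S(\bar\mrho_2)\|_{X_{T_0}}\le\tfrac12\|\bar\mrho_1-\bar\mrho_2\|_{X_{T_0}}$ after shrinking $T_0$. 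Banach's fixed-point theorem then produces a unique weak solution of \eqref{eqn_diff} on $[0,T_0]$.

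The main obstacle is the passage from this local solution to a global one: because $P$ is merely $L^\infty$, the drift cannot be integrated by parts, so testing \eqref{eqn_diff} with $\mrho$ and absorbing the velocity term into $\sigma\|\nabla\mrho\|_{L^2}^2$ yields only the Riccati-type bound $\tfrac{d}{dt}\|\mrho\|_{L^2}^2\ls\|\mrho\|_{L^2}^4+\|f(\cdot,t)\|_{L^\infty}^2\|\mrho\|_{L^2}^2$, which a priori permits finite-time blow-up. The remedy is to exploit the conservative structure: testing the equation with the negative part $\mrho^-:=\max(-\mrho,0)\in L^2(0,T_0;H^1(\om))$ and using $\mrho_0\ge0$ (the sign built into Definition~\ref{def_weak}, and the relevant case since $\mrho_0$ is a probability density) gives, after absorbing $\sigma\|\nabla\mrho^-\|_{L^2}^2$ and Grönwall, $\mrho^-\equiv0$, i.e.\ $\mrho\ge0$ on $[0,T_0]$; hence $t\mapsto\int_\om\mrho(\cdot,t)\,dx$ is constant, so $\|\mrho(\cdot,t)\|_{L^1(\om)}=\|\mrho_0\|_{L^1(\om)}$ and the drift satisfies the \emph{uniform} bound $\|\QF[\mrho](\cdot,t)\|_{L^\infty}\le K:=\|P\|_{L^\infty}(\operatorname{diam}\om)\|\mrho_0\|_{L^1}$. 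Re-running the energy estimate with this constant turns the Riccati inequality into a linear one, giving $\|\mrho(\cdot,t)\|_{L^2}^2\le\|\mrho_0\|_{L^2}^2\exp\bigl(\tfrac1\sigma\int_0^t(K+\|f(\cdot,s)\|_{L^\infty})^2\,ds\bigr)$ wherever the solution exists; a standard continuation argument — restarting the construction from $\mrho(\cdot,T_0)$, whose $L^2$-norm is controlled, and iterating, with $\mrho$ remaining nonnegative at each step — then extends $\mrho$ to all of $[0,T]$. Finally, for global uniqueness, if $\mrho_1,\mrho_2$ are two weak solutions with the same data then $w:=\mrho_1-\mrho_2$ solves $\pa_t w+\nabla\cdot\bigl(\QF[\mrho_1]w+\QF[w]\mrho_2+fw\bigr)=\sigma\Delta w$ with $w(\cdot,0)=0$; testing with $w$, using $\|\QF[w]\|_{L^\infty}\le C\|w\|_{L^2}$ together with the uniform bounds on $\|\mrho_i(\cdot,t)\|_{L^2}$, absorbing the gradient terms into $\sigma\|\nabla w\|_{L^2}^2$, and applying Grönwall forces $w\equiv0$.
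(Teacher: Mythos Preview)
Your argument is correct and follows a genuinely different route from the paper. Both proofs hinge on the same key observation --- that mass conservation together with the boundedness of $\om$ and $P$ forces $\|\QF[\mu](\cdot,t)\|_{L^\infty}\le \operatorname{diam}(\om)\|P\|_{L^\infty}\|\mrho_0\|_{L^1}$, which linearises the energy estimate --- but they exploit it at different stages. The paper runs the Picard iteration $\mrho^{n+1}=\mathcal S(\mrho^n)$ \emph{directly on $[0,T]$}: since each linear step is conservative and (implicitly, via the maximum principle for the linear equation) sign-preserving, the bound $\|\QF[\mrho^n]\|_{L^\infty}\le \operatorname{diam}(\om)\|P\|_{L^\infty}\|\mrho_0\|_{L^1}$ is uniform in $n$ from the outset, so uniform $V$-bounds follow immediately; Aubin--Lions then extracts a strongly $L^2(\om_T)$-convergent subsequence, and the quadratic term $\QF[\mrho^{n_k}]\mrho^{n_{k+1}}$ passes to the limit by the same splitting you use for uniqueness. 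Your approach instead proves a contraction in $\mc([0,T_0];L^2(\om))$ on a short interval, obtains local existence and uniqueness in one stroke, then establishes nonnegativity \emph{a posteriori} by testing with $\mrho^-$ to unlock the $L^1$ conservation and continue globally. The paper's route is shorter to global existence but relies on compactness and leaves the sign preservation implicit; yours is more self-contained on the positivity issue (which is actually needed for the paper's $\|\mrho_0\|_{L^1}$ bound too) and gives local uniqueness for free, at the price of the extra continuation step.
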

\begin{proof} {\it Existence.-} We first employ the following iteration scheme: Let $\mrho^1(x,t) := \mrho_0(x)$ for $(x,t) \in \om_T$. For $n \geq 1$, let $\mrho^{n+1}$ be the solution of 
\[
\pa_t \mrho^{n+1} + \nabla \cdot (\QF[\mrho^n] \mrho^{n+1} + f \mrho^{n+1}) = \sigma\Delta \mrho^{n+1}
\]
with the initial data $\mrho^n(x)|_{t=0} = \mrho_0(x)$ for all $n \geq  1$ $x \in \om$ and the zero-flux boundary conditions. It is clear that $\int_\om \mrho^n(x,t)\,dx = \int_\om \mrho_0(x)\,dx$. Note that for given $\mrho^n \in V$ we can have a unique weak solution to the equation \eqref{eqn_diff} since $\QF[\mrho^n] \in L^\infty(\om)$ and $f \in L^\infty(\om)$. We next show that $\mrho^{n+1} \in V$. A straightforward computation yields
$$\begin{aligned}
\frac12\frac{d}{dt}\int_\om (\mrho^{n+1})^2\,dx + \sigma\int_\om |\nabla \mrho^{n+1}|^2\,dx &= \int_\om \nabla \mrho^{n+1} \cdot\lt( \QF[\mrho^n]\mrho^{n+1} + f\mrho^{n+1} \rt)\,dx\cr
&=: I_1 + I_2,
\end{aligned}$$
where $I_2$ can be easily estimated as
\[
I_2 \leq \int_\om |\nabla \mrho^{n+1}| |f| \mrho^{n+1}\,dx \leq \frac{\epsilon}{2}\int_\om |\nabla \mrho^{n+1}|^2\,dx + C_\epsilon \|f\|_{L^\infty}^2 \int_\om (\mrho^{n+1})^2\,dx.
\]
For the estimate of $I_1$, we use the fact that
\bq\label{est_weak1}
\|\QF[\mrho^n]\|_{L^\infty} \leq \operatorname{diam}(\om)\|P\|_{L^\infty}\|\mrho_0\|_{L^1} < \infty,
\eq
to obtain
\[
|I_1| \leq \int_\om |\nabla \mrho^{n+1}| |\QF[\mrho^n]|\mrho^{n+1}\,dx \leq \frac{\epsilon}{2}\int_\om |\nabla \mrho^{n+1}|^2\,dx + C_\epsilon\int_\om (\mrho^{n+1})^2\,dx.
\]
Combining the above estimates and choosing $\epsilon < \sigma$, we find
\[
\frac12\frac{d}{dt}\int_\om (\mrho^{n+1})^2\,dx + \lt(\sigma - \epsilon\rt)\int_\om |\nabla \mrho^{n+1}|^2\,dx \leq C_\epsilon\lt(1 + \|f\|_{L^\infty}^2 \rt)\int_\om (\mrho^{n+1})^2\,dx.
\]
Applying Gronwall's inequality to the above differential inequality deduces
\bq\label{est_weak1.5}
\int_\om (\mrho^{n+1})^2\,dx + \int_0^t \int_\om |\nabla \mrho^{n+1}|^2\,dxds \leq C(T,\sigma,\|\mrho_0\|_{L^2}, M).
\eq
We also get that for all $\psi \in H^1(\om)$
$$\begin{aligned}
\|\pa_t \mrho^{n+1}\|_{H^{-1}_*} &= \sup_{\|\psi\|_{H^1} \leq 1} | \lal \pa_t \mrho^{n+1}, \psi \ral|\cr
&\leq \sup_{\|\psi\|_{H^1} \leq 1} \lt| \lt\lal \QF[\mrho^n]\mrho^{n+1} + f \mrho^{n+1} + \sigma\nabla \mrho^{n+1}, \nabla \psi \rt\ral\rt|\cr
&\leq \lt(\|\QF[\mrho^n]\|_{L^\infty} + \|f\|_{L^\infty}\rt)\|\mrho^{n+1}\|_{L^2} + \sigma\|\nabla \mrho^{n+1}\|_{L^2}.
\end{aligned}$$
Thus we obtain $\pa_t \mrho^{n+1}\in L^2(0,T; H^{-1}_*(\om))$ due to \eqref{est_weak1} and \eqref{est_weak1.5}. This concludes $\mrho^n \in V$ for all $n \geq 2$. Note that this also implies $\mrho^{n} \in \mc([0,T];L^2(\om))$ for all $n \geq 2$. Indeed, we have
\[
\max_{0 \leq t \leq T}\|\mrho^n(t)\|_{L^2} \leq C\lt(\|\mrho^n\|_{L^2(0,T;H^1)} + \|\pa_t \mrho^n\|_{L^2(0,T;H^{-1}_*)} \rt) \quad \mbox{for all } n \geq 2,
\]
where $C$ only depends on $T$. Then, by Aubin-Lions lemma, there exist a subsequence $\mrho^{n_k}$ and a function $\mrho \in L^2(\om_T)$ such that
\bq\label{est_weak2}
\mrho^{n_k} \to \mrho \quad \mbox{in } L^2(\om_T) \quad \mbox{as} \quad k \to \infty.
\eq
We next show that the above limiting function $\mrho$ solves the equation \eqref{eqn_diff} in the sense of Definition \ref{def_weak}. For this, it suffices to take into account the interaction term $\QF[\mrho]\mrho$ since the other terms are linear with respect to $\mrho$.
Using the linearity of the functional $\QF$ together with \eqref{est_weak1} and the following fact
\[
\|\QF[f]\|_{L^\infty} \leq \operatorname{diam}(\om)\|P\|_{L^\infty} \sqrt{|\om|}\|f\|_{L^2},
\]
we get
\begin{align}\label{est_weak3}
\begin{aligned}
&\int_0^T \int_\om \lt| \mrho^{n_{k+1}}\QF[\mrho^{n_k}] - \mrho \QF[\mrho]\rt|^2 dxdt \cr
&\quad \leq  2\int_0^T \int_\om \lt| \mrho^{n_{k+1}} - \mrho\rt|^2|\QF[\mrho^{n_k}]|^2\,dxdt + 2\int_0^T \int_\om \mrho^2 |\QF[\mrho^{n_k} - \mrho]|^2\,dxdt\cr
&\quad \leq C_0\int_0^T \int_\om \lt| \mrho^{n_{k+1}} - \mrho\rt|^2 + \lt| \mrho^{n_{k}} - \mrho\rt|^2\,dxdt \to 0 \quad \mbox{as} \quad k \to \infty,
\end{aligned}
\end{align}
where $C_0 > 0$ is given by
\[
C_0 := 2 \operatorname{diam}(\om)^2\|P\|_{L^\infty}^2\lt(\|\mrho_0\|_{L^1}^2  + |\om|\|\mrho\|_{L^\infty(0,T;L^2)}^2\rt).
\]
Furthermore, we can easily show that
\[
\lim_{t \to 0+}\int_\om \mrho^{n_k +1}(x,t)\varphi(x,t)\,dx = \int_\om \mrho_0(x)\varphi_0(x)\,dx
\]
and
\[
\lim_{t \to T-}\lim_{k \to \infty}\int_\om \mrho^{n_k + 1}(x,t)\varphi(x,t)\,dx = \int_\om \mrho(x,T)\varphi(x,T)\,dx,
\]
due to $\mrho^n \in \mc([0,T];L^2(\om))$ and \eqref{est_weak2}. Hence we have that the limiting function $\mrho$ satisfies
\[
\int_{\om} \mrho(x,T)\varphi(x,T)\,dx - \int_{\om} \mrho_0(x)\varphi_0(x)\,dx=\int_0^T \int_\om \mrho \pa_t \varphi+\lt( \QF[\mrho] \mrho + f \mrho - \sigma\nabla \mrho \rt)\cdot \nabla \varphi\,dx dt .
\]

{\it Uniqueness.-} Let $\mrho_i,i=1,2$ be two solutions to the equation \eqref{eqn_diff} with initial data $\mrho_i(0) \in L^2(\om)$. Then, by using the similar estimate as in \eqref{est_weak3}, we find
$$\begin{aligned}
&\frac12\frac{d}{dt}\int_\om |\mrho_1 - \mrho_2|^2\,dx + \sigma\int_\om |\nabla(\mrho_1 - \mrho_2)|^2\,dx\cr
&\quad =  \int_\om \nabla(\mrho_1 - \mrho_2) \cdot \lt(  \QF[\mrho_1 - \mrho_2]\mrho_1 + \QF[\mrho_2](\mrho_1 - \mrho_2) + f(\mrho_1 - \mrho_2) \rt)dx\cr
&\quad \leq \e\int_\om |\nabla(\mrho_1 - \mrho_2)|^2\,dx + C_\e\lt(1 + \|f\|_{L^\infty}^2\rt)\int_\om |\mrho_1 - \mrho_2|^2\,dx,
\end{aligned}$$
where $C_\e$ depends only on $\om$, $\e$, $\|\mrho_1\|_{L^\infty(0,T;L^2)}$, and $\|\mrho_2(0)\|_{L^1}$. Finally, we apply the Gronwall's inequality to the above differential inequality to get
\[
\|\mrho_1 - \mrho_2\|_{L^\infty(0,T;L^2)}^2 + \| \nabla(\mrho_1 - \mrho_2)\|_{L^2(0,T;L^2)}^2 \leq C_1 \|\mrho_1(0) - \mrho_2(0)\|_{L^2}^2
\]
where $C_1$ depends only on $T,\sigma,\|\mrho_2(0)\|_{L^2}, M, \om$, and $ \|\mrho_1\|_{L^\infty(0,T;L^2)}$.
This completes the proof.
\end{proof}

\begin{thm} For a given $T, M> 0$, let us assume $\mrho_0 \in L^2(\om)$.  Furthermore, we assume that $P \in L^\infty(\Omega^2)$ and $\CLambda$ satisfies that for all $R > 0$
\[
W^{1,\infty}(\CLambda, B(0,R)) \leq CR,
\]
for some $C > 0$. Then there exist $f^\infty \in Q_M$ and the corresponding density $\mrho^\infty$ solving the optimal control problem \eqref{eqn_mini}-\eqref{eqn_diff}.
\end{thm}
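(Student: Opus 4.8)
The plan is to use the direct method of the calculus of variations. Let $(f_n)_{n\in\N}\subset Q_M$ be a minimizing sequence, $J(\mrho_n,f_n)\to\mathfrak m:=\inf_{f\in Q_M}J(\mrho,f)$, where $\mrho_n$ denotes the unique weak solution of \eqref{eqn_diff} with control $f_n$ furnished by Theorem \ref{thm_weak}. Since $\om$ is bounded, $\|f\|_{L^2(\om_T)}\le|\om|^{1/2}\|f\|_{L^2(0,T;L^\infty(\om))}\le|\om|^{1/2}M$ for $f\in Q_M$, so $Q_M$ is a bounded subset of $L^2(\om_T)$; it is also convex and strongly closed there (closedness: extract an a.e.\ convergent subsequence and use $\int_0^T\|g(\cdot,t)\|_{L^\infty(\om)}^2\,dt\le\liminf_n\int_0^T\|g_n(\cdot,t)\|_{L^\infty(\om)}^2\,dt$). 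By Mazur's theorem $Q_M$ is then weakly closed in $L^2(\om_T)$, so, along a subsequence (not relabelled), $f_n\rightharpoonup f^\infty$ weakly in $L^2(\om_T)$ with $f^\infty\in Q_M$.

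Next I would extract a convergent subsequence of states and pass to the limit in \eqref{eqn_diff}. The a priori bounds in the proof of Theorem \ref{thm_weak} depend only on $T,\sigma,\|\mrho_0\|_{L^2},M$, so $(\mrho_n)$ is bounded in $L^\infty(0,T;L^2(\om))\cap L^2(0,T;H^1(\om))$ and $(\pa_t\mrho_n)$ in $L^2(0,T;H^{-1}_*(\om))$, uniformly in $n$, with $\mrho_n\ge0$ and $\int_\om\mrho_n(x,t)\,dx=\|\mrho_0\|_{L^1}$. By the Aubin--Lions lemma, along a further subsequence, $\mrho_n\to\mrho^\infty$ strongly in $L^2(\om_T)$ and a.e., $\mrho_n\rightharpoonup\mrho^\infty$ weakly in $L^2(0,T;H^1(\om))$, and $\pa_t\mrho_n\rightharpoonup\pa_t\mrho^\infty$ weakly in $L^2(0,T;H^{-1}_*(\om))$; in particular $\mrho^\infty\ge0$. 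I would then pass to the limit in the weak formulation: the diffusion term by weak $L^2$ convergence of $\nabla\mrho_n$; the interaction term since $\QF[\mrho_n]\mrho_n\to\QF[\mrho^\infty]\mrho^\infty$ in $L^2(\om_T)$, exactly as in \eqref{est_weak3}; and the control term since $f_n\mrho_n$ is bounded in $L^2(\om_T)$ (because $\|f_n\mrho_n\|_{L^2(\om_T)}\le\sup_n\|\mrho_n\|_{L^\infty(0,T;L^2)}\|f_n\|_{L^2(0,T;L^\infty)}$) and converges weakly there to $f^\infty\mrho^\infty$ (a weak--strong product, tested against a dense set of smooth functions), while the initial datum $\mrho_0$ is preserved. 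By uniqueness in Theorem \ref{thm_weak}, $\mrho^\infty$ is the weak solution of \eqref{eqn_diff} with control $f^\infty$.

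Finally I would prove $\liminf_n J(\mrho_n,f_n)\ge J(\mrho^\infty,f^\infty)$. For the confinement term this is an equality, since $|x-x_d|^2$ is bounded on $\overline\om$ and $\mrho_n\to\mrho^\infty$ in $L^1(\om_T)$. For the penalization term I would use convexity of $\CLambda$: the hypothesis $W^{1,\infty}(\CLambda,B(0,R))\le CR$ gives $|\CLambda(v)|\le C|v|$ and $|\xi|\le C|v|$ for any $\xi$ in the subdifferential of $\CLambda$ at $v$. Choosing a measurable selection $(x,t)\mapsto\xi(x,t)$ of the subdifferential of $\CLambda$ at $f^\infty(x,t)$, so that $\CLambda(f^\infty),\xi\in L^2(0,T;L^\infty(\om))$, convexity and $\mrho_n\ge0$ give the pointwise bound $\CLambda(f_n)\mrho_n\ge\CLambda(f^\infty)\mrho_n+\mrho_n\,\xi\cdot(f_n-f^\infty)$. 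Integrating over $\om_T$ and using dominated convergence (with $\mrho_n\to\mrho^\infty$ in $L^2(\om_T)$, $\sup_n\|\mrho_n\|_{L^\infty(0,T;L^2)}<\infty$, $\|\mrho_n(\cdot,t)\|_{L^1}=\|\mrho_0\|_{L^1}$, and $\|\CLambda(f^\infty)(\cdot,t)\|_{L^\infty}+\|\xi(\cdot,t)\|_{L^\infty}\in L^2(0,T)$), one obtains $\int_0^T\int_\om\CLambda(f^\infty)\mrho_n\to\int_0^T\int_\om\CLambda(f^\infty)\mrho^\infty$ and $\mrho_n\,\xi\to\mrho^\infty\,\xi$ strongly in $L^2(\om_T)$, while $f_n-f^\infty\rightharpoonup0$ weakly in $L^2(\om_T)$, so the cross term tends to $0$; hence $\liminf_n\int_0^T\int_\om\CLambda(f_n)\mrho_n\ge\int_0^T\int_\om\CLambda(f^\infty)\mrho^\infty$. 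Adding the two parts gives $\mathfrak m=\liminf_n J(\mrho_n,f_n)\ge J(\mrho^\infty,f^\infty)\ge\mathfrak m$, i.e. $(f^\infty,\mrho^\infty)$ solves \eqref{eqn_mini}-\eqref{eqn_diff}.

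The main obstacle is this last step — the lower semicontinuity of the nonlinear, $\mrho$-weighted penalization $\int_0^T\int_\om\CLambda(f_n)\mrho_n$ when $f_n$ converges only weakly and the weight $\mrho_n$ varies; it is resolved by the subdifferential inequality for the convex $\CLambda$, together with nonnegativity and the uniform mass and $L^\infty(0,T;L^2)$ bounds on $\mrho_n$ and the linear growth of $\CLambda$ and its subgradients. A secondary technical point is the weak closedness of $Q_M$ in $L^2(\om_T)$ and the weak--strong handling of the transport term $f_n\mrho_n$ in the PDE limit.
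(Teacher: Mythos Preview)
Your argument is correct and follows the same direct-method skeleton as the paper, but differs in two technical choices.

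\emph{Compactness for the controls.} The paper extracts $f^{j_k}\weakto f^\infty$ weak$^*$ in $L^2(0,T;L^\infty(\om))$ directly by Banach--Alaoglu, since $Q_M$ is the closed ball of that space. You instead embed $Q_M$ into $L^2(\om_T)$, argue it is convex and strongly closed there (via an a.e.\ convergent subsequence and Fatou on $t\mapsto\|g(\cdot,t)\|_{L^\infty}^2$), and invoke Mazur to obtain weak closedness. Both work; the paper's route is shorter because it uses the natural topology of $Q_M$, whereas yours requires the extra closedness check but has the advantage that the subsequent weak--strong product argument for $f_n\mrho_n$ needs only $L^2(\om_T)$ duality.

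\emph{Lower semicontinuity of the penalization.} This is where the approaches genuinely diverge. The paper writes $\CLambda(f^{j_k})-\CLambda(f^\infty)=\nabla\CLambda(\cdot)\cdot(f^{j_k}-f^\infty)$ via the mean value theorem and then appeals to a de la Vall\'ee--Poussin argument to show that in fact $\CLambda(f^{j_k})\mrho^{j_k}\rightharpoonup\CLambda(f^\infty)\mrho^\infty$ in $\mathcal{M}(\om_T)$; this tacitly uses $C^1$ regularity of $\CLambda$ and handles a test function that itself depends on $k$. Your subdifferential inequality $\CLambda(f_n)\ge\CLambda(f^\infty)+\xi\cdot(f_n-f^\infty)$ multiplied by $\mrho_n\ge0$ is more elementary: it needs only convexity and the linear growth $|\CLambda(v)|+|\partial\CLambda(v)|\le C|v|$ implied by the hypothesis, and it isolates the two limits (strong for $\mrho_n\xi$, weak for $f_n-f^\infty$) cleanly. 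The paper's argument yields a slightly stronger conclusion (convergence rather than mere lsc), but yours is exactly what the minimization needs and avoids the differentiability assumption. One small point: your strong convergence $\mrho_n\xi\to\mrho^\infty\xi$ in $L^2(\om_T)$ via dominated convergence does require passing to a further subsequence along which $\|\mrho_n(\cdot,t)-\mrho^\infty(\cdot,t)\|_{L^2(\om)}\to0$ for a.e.\ $t$; this is harmless, but worth making explicit.
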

\begin{proof} For $f \in Q_M$, by Theorem \ref{thm_weak}, there exists a weak solution $\mrho$ in the sense of Definition \ref{def_weak}. Note that $0 \in Q_M$ and 
\[
J(\mrho^0,0) = \frac12\int_0^T\int_\om |x-x_d|^2\mrho(x,t)\,dxdt \leq C(T,\om)\|\mrho_0\|_{L^1(\om)} \leq C,
\] 
where $\mrho^0$ is a weak solution of the equation \eqref{eqn_diff} with $f=0$.
Since $J(\mrho,f) \geq 0$ for all $(\mrho,f)\in V \times Q_M$, there exist a sequence $(f^j)_{j \in \N} \in Q_M$ and the corresponding density $(\mrho^j)_{j \in \N} \in V$ solving \eqref{eqn_diff} such that
\[
\lim_{j \to \infty} J(\mrho^j, f^j) = \inf_{f \in Q_M} J(\mrho,f).
\]
On the other hand, since $(\mrho^j,f^j)_{j \in \N} \in V \times Q_M$, by Banach-Alaoglu theorem,  there exist a subsequence $(\mrho^{j_k},f^{j_k}) \in V \times Q_M$ and $(\mrho^\infty,f^\infty)\in V \times Q_M$ such that
\bq\label{res_conv}
\mrho^{j_k} \to \mrho^\infty \quad \mbox{in } L^2(\om_T) \quad \mbox{and} \quad f^{j_k} \weakto f^\infty \quad \mbox{in } L^2(0,T;L^\infty(\om)).
\eq
We next show that $(\mrho^\infty,f^\infty)$ is a solution to \eqref{eqn_diff}. For this, it is enough to show that
\[
I_k:= \int_0^T \int_\om \lt(f^{j_k}\mrho^{j_k} - f^\infty \mrho^\infty\rt)\phi\,dxdt \to 0\quad \mbox{as} \quad k \to  \infty,
\]
for $\phi \in L^2(0,T;H^1(\om))$. For this, we decompose $I_k$ into two parts as
\[
I_k = \int_0^T \int_\om (f^{j_k} - f^\infty)\mrho^{j_k} \phi\,dxdt + \int_0^T \int_\om (\mrho^{j_k} - \mrho^\infty)f^\infty \phi\,dxdt =: I_k^1 + I_k^2.
\]
Since
\[
L^2(0,T;L^\infty(\om)) = \lt(L^2(0,T;L^1(\om))\rt)' \quad \mbox{and} \quad \mrho^{j_k} \phi \in L^2(0,T;L^1(\om)),
\]
it is clear from \eqref{res_conv} that  $I_k^1 \to 0$ as $k \to \infty$. For the convergence of $I_k^2$, we get
$$\begin{aligned}
I_k^2 &\leq \int_0^T\|f^\infty\|_{L^\infty}\|\mrho^{j_k} - \mrho^\infty\|_{L^2}\|\phi\|_{L^2}\,dt \cr
&\leq \|\phi\|_{L^\infty(0,T;L^2)}\|f^\infty\|_{L^2(0,T;L^\infty)}\|\mrho^{j_k} - \mrho^\infty\|_{L^2(0,T;L^2)} \to 0 \quad \mbox{as} \quad k \to \infty.
\end{aligned}$$
Thus we conclude that $(\mrho^\infty,f^\infty)$ is a solution to \eqref{eqn_diff}. Furthermore, we obtain
\[
\int_0^T \int_\om |x - x_d|^2 \mrho^{j_k}\,dxdt \to \int_0^T \int_\om |x - x_d|^2 \mrho^\infty\,dxdt \quad \mbox{as} \quad k \to \infty,
\]
due to $|\om| < \infty$. We also find
\bq\label{est_semi}
\lim_{k \to \infty}\int_0^T \int_\om \CLambda(f^{j_k}) \mrho^{j_k}\,dxdt \geq \int_0^T \int_\om \CLambda(f^\infty) \mrho^\infty\,dxdt.
\eq
More precisely, we can show that
\[
\CLambda(f^{j_k}) \mrho^{j_k} \weakto \CLambda(f^\infty) \mrho^\infty \quad \mbox{in } \mathcal{M}(\om_T) \quad \mbox{as} \quad k \to \infty.
\]
Indeed, for $\phi \in \mc_c(\om_T)$, we have
$$\begin{aligned}
&\int_0^T \int_\om \lt(\CLambda(f^{j_k}) \mrho^{j_k} - \CLambda(f^\infty) \mrho^\infty \rt)\phi\,dxdt \cr
&\quad = \int_0^T \int_\om \lt(\CLambda(f^{j_k}) - \CLambda(f^\infty)\rt)\mrho^{j_k}\phi\,dxdt +\int_0^T \int_\om \CLambda(f^\infty)(\mrho^{j_k} - \mrho^\infty)\phi\,dxdt \cr
&\quad =: J_k^1 + J_k^2,
\end{aligned}$$
where $J_k^2$ is easily estimated by
\[
J_k^2 \leq C\|\phi\|_{L^\infty(\om_T)}\int_0^T \|f^\infty\|_{L^\infty}\|\mrho^\infty - \mrho^\infty\|_{L^2}\,dt \leq M\|\phi\|_{L^\infty(\om_T)}\|\mrho^{j_k} - \mrho^\infty\|_{L^2(\om_T)}.
\]
Thus $J_k^2 \to 0 $ as $k \to \infty$. For the estimate of $J_k^1$, we note that there exists a $c_0 \in [0,1]$ such that
\[
\CLambda(f^{j_k}) - \CLambda(f^\infty) = \nabla \CLambda \lt( (1 - c_0)f^\infty - c_0 f^{j_k} \rt)\cdot (f^{j_k} - f^\infty).
\]
On the other hand, it follows the assumption on $\CLambda$ that
\[
\int_{\R^d} \lt|\nabla \CLambda \lt( (1 - c_0)f^\infty - c_0 f^{j_k} \rt) \rt||\phi| \mrho^{j_k}\,dx \leq C\|\phi\|_{L^\infty}\lt(\|f^{j_k}\|_{L^\infty} + \|f^\infty\|_{L^\infty}\rt),
\]
and this implies 
\[
\nabla \CLambda \lt( (1 - c_0)f^\infty - c_0 f^{j_k} \rt) \mrho^{j_k}\phi \in L^2(0,T;L^1(\om)) \quad \mbox{uniformly in } k.
\]
This yields $J_k^1 \to 0$ as $k \to \infty$. Then, by de la Vall\'ee-Poussin's theorem, we obtain the semicontinuity \eqref{est_semi}. This yields
\[
\liminf_{k \to \infty}J(\mrho^{j_k}, f^{j_k}) \geq J(\mrho^\infty,f^\infty).
\]
Hence we conclude 
\[
\inf_{f \in Q_M} J(\mrho,f)=\lim_{j \to \infty} J(\mrho^j, f^j)=\liminf_{k \to \infty}J(\mrho^{j_k}, f^{j_k}) \geq J(\mrho^\infty,f^\infty).
\]

\end{proof}

\section{First order optimality conditions}\label{sec:firstorderopt}
In this section, we derive first order optimality conditions for the mean field optimal control problem studied in Section \ref{sec:existence}:
\begin{align}\label{eq:MFOC}
\partial_t\mu + \nabla \cdot \lt((\QF[\mu] + f)\mu\rt) = \sigma \Delta \mu, \quad x \in \om, \quad t > 0,
\end{align}
where the control $f$ is the solution of the minimization of the following cost functional:
\begin{align}\label{eq:Jfun}
J(\mu,f) = \int_0^T\left(\frac{1}{2}\int_\om |x-x_d|^2\mu(x,t)\,dx + \gamma \int_\om \CLambda(f)\mu(x,t)\,dx\right)\,dt.
\end{align}
\subsection{Formal derivation of the optimality conditions}
Let us first write the Lagrangian of the mean field optimal control defined by \eqref{eq:MFOC} and \eqref{eq:Jfun}, as follows
\begin{equation}
\begin{aligned}\label{eq:Lagrangian}
&\mathcal{L}(\mu,\psi,f) = \int_0^T\left(\frac12\int_\om |x-x_d|^2\mu(x,t)\,dx +\gamma \int_\om \CLambda(f) \mu(x,t)\,dx\right)dt \\
& \qquad -  \int_0^T\left[\int_\om \psi(x,t)\left(\partial_t\mu(x,t) + \nabla \cdot \lt(\QF[\mu(x,t)] + f(x,t))\mu(x,t)\rt) - \sigma \Delta \mu(x,t)\right)\,dx\right]dt.
\end{aligned}
\end{equation}
Integrating by parts and taking the terminal data $\psi(x,T) = 0$, we get
\begin{equation}
\begin{aligned}\label{eq:Lagrangian2}
\mathcal{L}(\mu,\psi,f) = &\int_0^T\left(\frac12\int_\om |x-x_d|^2\mu \,dx +\gamma \int_\om \CLambda(f) \mu\,dx\right)\,dt  +\int_\om \psi(x,0)\mu(x,0)\,dx + \int_0^T\int_\om \partial_t\psi \,\mu \,dx dt\\
& + \int_0^T\int_\om \nabla \psi  \cdot (\bigF[\mu] \mu)\,dxdt  + \int_0^T\int_\om \nabla \psi \cdot (f \mu)\,dxdt + \sigma \int_0^T \int_\om \mu\Delta \psi\,dxdt,
\end{aligned}
\end{equation}
where we omit the dependency on $(x,t)$ where not necessary. We compute the functional derivatives of the Lagrangian with respect to the state function $\mu$ and the control $f$,
\begin{align}
\frac{\delta \mathcal{L}}{\delta f} &= \gamma \nabla \CLambda (f) \mu - \nabla \psi\,\mu = (\gamma \nabla \CLambda(f)-\nabla \psi)\mu,\label{eq:OptCon}\\
\frac{\delta \mathcal{L}}{\delta \mu} &=\frac{1}{2}|x-x_d|^2 +\gamma \CLambda(f) + \partial_t\psi + \nabla \psi \cdot f + \sigma \Delta \psi\cr
&\quad - \frac12\int_\om \lt(P(x,y)\nabla \psi(x,t) - P(y,x)\nabla \psi(y,t)\rt)\cdot (y-x)\mu(y,t)\,dy.\label{eq:Adj}
\end{align}
Let $(\mu^*,\psi^*,f^*)$ be the solution to the optimal control problem. Then we have
\[
\frac{\delta \mathcal{L}}{\delta f}\Big|_{(\mu,\psi,f) = (\mu^*,\psi^*,f^*)} = 0 \quad \mbox{and} \quad \frac{\delta \mathcal{L}}{\delta \mu}\Big|_{(\mu,\psi,f) = (\mu^*,\psi^*,f^*)} = 0.
\]
This yields from \eqref{eq:OptCon} that
\bq\label{condi_fs}
\gamma \nabla \CLambda (f^*) = \nabla \psi^* \quad \mbox{on the support of $\mu^*$}.
\eq
We also find from \eqref{eq:Adj} that $\psi^*$ satisfies 
$$\begin{aligned}
&\pa_t \psi^* + \frac12|x-x_d|^2 + \gamma \CLambda(f^*) + \nabla \psi^*  \cdot f^* + \sigma \Delta  \psi^* \cr
&\qquad \quad - \frac12\int_\om \lt(P(x,y)\nabla \psi^*(x,t) - P(y,x)\nabla \psi^*(y,t)\rt)\cdot (y-x)\mu^*(y,t)\,dy = 0,
\end{aligned}$$
or equivalently
\begin{align}\label{eq_fpsi}
\begin{aligned}
&\pa_t \psi^* + \frac12|x-x_d|^2 + \gamma\lt( \CLambda(f^*) + \nabla \CLambda(f^*) \cdot f^*\rt) + \sigma \Delta \psi^* \cr
&\qquad \quad - \frac12\int_\om \lt(P(x,y)\nabla \psi^*(x,t) - P(y,x)\nabla \psi^*(y,t)\rt)\cdot (y-x)\mu^*(y,t)\,dy = 0,\end{aligned}
\end{align}
due to \eqref{condi_fs}, where $\mu^*$ satisfies
\[
\partial_t\mu^* + \nabla \cdot ((\bigF[\mu^*] + f^*)\mu^*) = \sigma\Delta \mu^*\quad \mbox{with} \quad \nabla \CLambda (f^*) = \frac1\gamma \nabla \psi^*.
\]
\subsection{Rigorous derivation of the optimality conditions}

The first order optimality conditions \eqref{firstordercond} are of utmost relevance as they are often used for the numerical computation of mean field optimal controls and we show how to proceed for that in Section \ref{sec:num}. Although they are very often {\it formally} derived, as we do above, and used  in several contributions, see, e.g. \cite{befrph13}, as a relatively straightforward consequence of the Lagrange multiplier theorem, we feel that presenting their {\it rigorous} derivation can be useful for a reader not familiar with such derivations. Moreover, by doing so, we highlight more precisely certain technical difficulties and aspects, which one may in fact encounter along the process, and are often left to a certain extent as for granted. Let us recall then the Lagrange multiplier theorem in Banach spaces.\\

Let $X$ and $Y$ be Banach spaces, and let a functional $J: U(x^*) \subseteq X \to \R$ and a mapping $G: U(x^*)\subseteq X \to Y$ be continuously differential on an open neighbourhood of $x^*$. Consider the following optimal problem:
\bq\label{eq_op}
J(x) \to \inf, \quad G(x) = 0.
\eq
Then we recall the following first order optimality condition whose proof can be found in \cite[Section 4.14]{Zei}.
\begin{thm}\label{thm_oc}Let $x^*$ be a solution to the problem \eqref{eq_op}, and let the range of the operator $G'(x^*) : X \to Y$ be closed. Then there exists a nonzero pair $(\lambda,p) \in \R \times Y'$ such that
\[
\mathcal{L}'_x(x^*,\lambda,p)(x) = 0 \quad \mbox{for all } x \in X,
\]
where 
\[
\mathcal{L}(x,\lambda,p) = \lambda J(x) + G(x)(p).
\]
Moreover, if Im $G'(x^*) = Y$, then $\lambda \neq 0$ in the above, thus we can assume that $\lambda = 1$.
\end{thm}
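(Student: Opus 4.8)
The plan is to deduce Theorem~\ref{thm_oc} from the Hahn--Banach theorem together with the open mapping theorem, splitting the argument according to whether the bounded linear operator $A := G'(x^*)\colon X \to Y$ is surjective. In both cases the pair $(\lambda,p)$ is produced by a duality argument attached to $A$; only the surjective case uses the optimality of $x^*$, and there the key ingredient is Lyusternik's tangent-space theorem.

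\emph{Case 1: $\mathrm{Im}\,A \neq Y$ (the ``abnormal'' case).} By hypothesis $\mathrm{Im}\,A$ is a closed subspace of $Y$, and here it is proper, so the Hahn--Banach theorem yields a nonzero $p \in Y'$ annihilating $\mathrm{Im}\,A$. Then $(\lambda,p) := (0,p)$ is nonzero and, for every $x \in X$,
\[
\mathcal{L}'_x(x^*,0,p)(x) = \langle p, A x\rangle = 0 ,
\]
which is the assertion. Note that optimality of $x^*$ is not used here, and $\lambda = 0$ is in general unavoidable.

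\emph{Case 2: $\mathrm{Im}\,A = Y$.} I would reduce the whole statement to the claim
\[
J'(x^*)(h) = 0 \qquad \text{for every } h \in \ker A ,
\]
after which the conclusion follows by routine functional analysis: $N := \ker A$ is closed, the induced map $\bar A\colon X/N \to Y$ is a continuous bijection and hence a topological isomorphism by the open mapping theorem, $J'(x^*)$ descends to a bounded functional $\widetilde J$ on $X/N$, and $p := -\widetilde J \circ \bar A^{-1} \in Y'$ then satisfies $J'(x^*)(x) + \langle p, A x\rangle = 0$ for all $x \in X$; thus $(\lambda,p) := (1,p)$ works and one may normalise $\lambda = 1$.

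The step I expect to be the main obstacle is the claim $J'(x^*)(h) = 0$ on $\ker A$, i.e.\ Lyusternik's theorem. Given $h \in \ker A$ one has $G(x^* + th) = G(x^*) + t A h + o(t) = o(t)$, and the point is to perturb $x^* + th$ back onto the constraint set $\{G = 0\}$ without spoiling its first-order behaviour. Surjectivity of $A$ and the open mapping theorem provide a constant $C$ and a (not necessarily linear) right-inverse selection $A^-$ with $A A^- = \mathrm{id}_Y$ and $\|A^- y\|_X \leq C\|y\|_Y$; with it one runs the Newton-type iteration $r_0(t) \equiv 0$, $r_{k+1}(t) = r_k(t) - A^-\!\big(G(x^* + th + r_k(t))\big)$, and uses the continuity of $G'$ near $x^*$ (so that $u \mapsto G(u) - A u$ has Lipschitz constant at most $\tfrac{1}{2C}$ on a small ball) to show that $(r_k(t))_{k}$ is Cauchy with limit $r(t)$ solving $G(x^* + th + r(t)) = 0$ and satisfying $\|r(t)\|_X \leq 2C\,\|G(x^* + th)\|_Y = o(t)$. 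Since $x^*$ minimises $J$ over $\{G = 0\}$, the scalar function $t \mapsto J(x^* + th + r(t))$ has a local minimum at $t = 0$, so
\[
0 \leq J(x^* + th + r(t)) - J(x^*) = t\,J'(x^*)(h) + J'(x^*)(r(t)) + o(\|th + r(t)\|) = t\,J'(x^*)(h) + o(t) ,
\]
and letting $t \to 0^{\pm}$ forces $J'(x^*)(h) = 0$. The real work is confined to this iteration --- securing a contraction constant and a ball uniform in $t$, and the $o(t)$ bound on $\|r(t)\|$; once these are in hand, Case~2, and with it the theorem, follows by the bookkeeping above.
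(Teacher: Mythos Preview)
Your argument is correct and is essentially the standard Lyusternik--Hahn--Banach route to the Lagrange multiplier rule in Banach spaces. However, you should be aware that the paper does \emph{not} actually prove this theorem: it is stated as a recalled result, with the proof deferred to \cite[Section 4.14]{Zei}. So there is no ``paper's own proof'' to compare against here; the authors invoke it as a black box and then devote their effort to verifying, in the subsequent theorem, that $G'_\mu(\mu^*,f^*)$ and $G'_f(\mu^*,f^*)$ are surjective so that the normal case $\lambda = 1$ applies.

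That said, your write-up is sound. The abnormal case is the obvious Hahn--Banach step. In the surjective case your Newton-type iteration works as written: the open mapping theorem supplies a (possibly nonlinear, discontinuous) selection $A^-$ with $\|A^- y\| \le C\|y\|$, and your contraction estimate only uses this norm bound together with the smallness of $\|G'(u) - A\|$ near $x^*$, not any continuity of $A^-$. The bootstrap keeping $u_k$ inside the small ball, the geometric decay of $\|G(u_k)\|$, and the resulting $\|r(t)\| = o(t)$ all go through; the final first-variation argument then forces $J'(x^*)(h) = 0$ on $\ker A$, and your quotient construction of $p$ is the clean way to finish. This is, up to cosmetic variations, the proof one finds in Zeidler.
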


In order to apply the above theorem, we set
\[
X = V \times L^2(\om_T), \quad Y = L^2(0,T;H^{-1}(\om)),  
\]
\[
J(\mrho,f) = \int_0^T\left(\frac{1}{2}\int_\om |x-x_d|^2\mrho(x,t)\,dx + \gamma\int_\om \CLambda(f)\mrho(x,t)\,dx\right)dt,
\]
and
$$\begin{aligned}
G(\mu,f)(\psi) &= - \int_\om \psi(x,T)\mu(x,T)\,dx +\int_\om \psi(x,0)\mu(x,0)\,dx + \int_0^T\int_\om \partial_t\psi \,\mu \,dx dt\\
& \quad + \int_0^T\int_\om \nabla \psi  \cdot (\bigF[\mu] \mu)\,dxdt  + \int_0^T\int_\om \nabla \psi \cdot (f \mu)\,dxdt - \sigma \int_0^T \int_\om\nabla \mu \cdot \nabla \psi\,dxdt,
\end{aligned}$$
for $\psi \in Y' = L^2(0,T;H^1_0(\om))$. Then straightforward computations yield
$$\begin{aligned}
G'_\mu(\mu,f)(\nu,\psi) &=- \int_\om \psi(x,T)\nu(x,T)\,dx +\int_\om \psi(x,0)\nu(x,0)\,dx + \int_0^T\int_\om \partial_t\psi \,\nu \,dx dt\\
& \quad + \int_0^T\int_\om \nabla \psi  \cdot (\bigF[\nu] \mu + \bigF[\mu] \nu + f\nu)\,dxdt  - \sigma \int_0^T \int_\om \nabla \nu\cdot \nabla \psi\,dxdt, 
\end{aligned}$$
for $(\nu,\psi) \in V \times Y'$, and 
\[
G'_f(\mu,f)(g,\psi) = \int_0^T \int_\om \nabla \psi \cdot (g\mu)\,dxdt \quad \mbox{for} \quad (g,\psi) \in Q_M \times V'.
\]
Note that the interaction terms on the right hand side of the equality for $G'_\mu(\mu,f)(\nu,\psi)$ can be rewritten as
$$\begin{aligned}
&\int_0^T\int_\om \nabla \psi  \cdot (\bigF[\nu] \mu + \bigF[\mu] \nu) dxdt \cr
&\quad = \frac12\int_0^T \int_{\om^2} \lt(P(x,y)\nabla \psi(x) - P(y,x)\nabla\psi(y) \rt)\cdot(y-x) \lt(\nu(x)\mu(y) + \mu(x)\nu(y) \rt)dxdydt.
\end{aligned}$$

We now present our main result on the first order optimality condition in the theorem below.
\begin{thm} Let $(\mu^*,f^*) \in V \times Q_M$ be a solution to the problem  \eqref{eq:MFOC}-\eqref{eq:Jfun}. Suppose that there exists a $\mu_\ell > 0$ such that $\mu^* \geq \mu_\ell$ for all $(x,t) \in \om_T$. Then there exists $\psi^* \in Y'$ such that
\begin{equation}\label{firstordercond} \begin{aligned}
G'_\mu(\mu^*,f^*)(\nu, \psi^* ) &= J'_\mu(\mu^*,f^*)(\nu), \quad \mbox{for all} \quad \nu \in V,\\
G'_f(\mu^*,f^*)(g, \psi^*) &= J'_f(\mu^*,f^*)(g), \quad \mbox{for all} \quad g \in L^2(\om_T).
\end{aligned}\end{equation}
\end{thm}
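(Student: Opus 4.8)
The plan is to apply the Lagrange multiplier theorem, Theorem~\ref{thm_oc}, in the concrete functional-analytic setting fixed just above the statement: $X=V\times L^2(\om_T)$, $Y=L^2(0,T;H^{-1}(\om))$, with $J$ the cost and $G$ the weak-form residual of \eqref{eq:MFOC}, so that $G(\mu,f)=0$ in $Y$ says exactly that $(\mu,f)$ is a weak solution in the sense of Definition~\ref{def_weak}. (We tacitly assume $f^*$ is interior to $Q_M$, so the bound $\|f\|_{L^2(0,T;L^\infty)}\le M$ is inactive and the problem is unconstrained in $X$.) First I would verify that $J$ and $G$ are continuously Fr\'echet differentiable near $(\mu^*,f^*)$, with derivatives the ones already displayed in the excerpt. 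For $G$ this is bookkeeping: the only nonlinearity is the bilinear term $\int_0^T\!\int_\om\nabla\psi\cdot(\QF[\mu]\mu)$, smooth in $\mu$ since $\QF$ is linear and bounded on $L^2(\om)$. For $J$ one invokes the linear growth of $\nabla\CLambda$ (as in the previous theorem, $W^{1,\infty}(\CLambda,B(0,R))\le CR$), so that $|\CLambda(v)|\ls 1+|v|^2$ and $|\nabla\CLambda(v)|\ls 1+|v|$; together with the boundedness of $\mu^*$ this makes $f\mapsto\int_0^T\!\int_\om\CLambda(f)\mu$ and $\mu\mapsto\int_0^T\!\int_\om\CLambda(f)\mu$ of class $C^1$. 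Theorem~\ref{thm_oc} then yields a nonzero pair $(\lambda,p)\in\R\times Y'$, $Y'=L^2(0,T;H^1_0(\om))$, with $\mathcal L'_x(\mu^*,f^*,\lambda,p)=0$, where $\mathcal L(x,\lambda,p)=\lambda J(x)+G(x)(p)$.

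The crux is to upgrade this to $\lambda\ne0$, so that we may normalise $\lambda=1$ and obtain a genuine optimality system rather than a degenerate Fritz John--type condition; by the final clause of Theorem~\ref{thm_oc} it is enough to show $G'(\mu^*,f^*):X\to Y$ is surjective (this also forces closed range). Here the hypothesis $\mu^*\ge\mu_\ell>0$ is used decisively. Given $h\in Y=L^2(0,T;H^{-1}(\om))$, solve for a.e.\ $t$ the Dirichlet problem $-\Delta\phi(\cdot,t)=h(\cdot,t)$ in $\om$ with $\phi(\cdot,t)\in H^1_0(\om)$, obtaining $\phi\in L^2(0,T;H^1_0(\om))$ with $\|\nabla\phi(t)\|_{L^2}\ls\|h(t)\|_{H^{-1}}$. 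Then take $\nu=0$ and $g:=\nabla\phi/\mu^*$, which lies in $L^2(\om_T)$ precisely because $1/\mu^*\le1/\mu_\ell$; for every $\psi\in Y'$ one computes $G'_\mu(\mu^*,f^*)(0,\psi)+G'_f(\mu^*,f^*)(g,\psi)=\int_0^T\!\int_\om\nabla\psi\cdot(g\mu^*)\,dxdt=\int_0^T\!\int_\om\nabla\psi\cdot\nabla\phi\,dxdt=\lal h,\psi\ral_{Y\times Y'}$. Hence $G'(\mu^*,f^*)$ is onto $Y$, and $\lambda\ne0$.

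With $\lambda=1$, stationarity $\mathcal L'_x(\mu^*,f^*,1,p)(\nu,g)=0$ for all $(\nu,g)\in X$ decouples, upon testing with $g=0$ and then with $\nu=0$, into $J'_\mu(\mu^*,f^*)(\nu)+G'_\mu(\mu^*,f^*)(\nu,p)=0$ and $J'_f(\mu^*,f^*)(g)+G'_f(\mu^*,f^*)(g,p)=0$. Since $G(x)(p)$ is linear in $p$, setting $\psi^*:=-p\in Y'$ turns these into exactly \eqref{firstordercond}. Unwinding the two identities then recovers the formal computation: $G'_f=J'_f$ gives $\gamma\nabla\CLambda(f^*)\mu^*=\nabla\psi^*\mu^*$, i.e.\ \eqref{condi_fs} on $\{\mu^*>0\}=\om_T$, while $G'_\mu=J'_\mu$, together with the rewriting of the interaction term given in the excerpt, is the weak form of the backward (adjoint) equation \eqref{eq_fpsi}.

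The step I expect to be the real obstacle is the constraint qualification, i.e.\ the surjectivity/closed-range of $G'(\mu^*,f^*)$: everything hinges on being allowed to divide by $\mu^*$, which is exactly what $\mu^*\ge\mu_\ell$ buys — without a strictly positive lower bound one only gets the weaker multiplier statement with possibly $\lambda=0$. A secondary, more bureaucratic difficulty is making the $C^1$ claims fully rigorous in these spaces — controlling the products $\CLambda(f)\mu$, $\nabla\CLambda(f)\mu$ and $g\mu^*$ in $L^1$ resp.\ $L^2$ — which needs the linear growth of $\nabla\CLambda$ together with an $L^\infty$ (or sufficiently high $L^p$) bound on $\mu^*$ in addition to its lower bound.
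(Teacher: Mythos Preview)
Your proposal is correct and follows the same overall architecture as the paper: apply Theorem~\ref{thm_oc} and verify the constraint qualification $\mathrm{Im}\,G'(\mu^*,f^*)=Y$ by solving a Poisson problem and dividing by $\mu^*$, which is exactly where the lower bound $\mu^*\ge\mu_\ell$ enters. The core mechanism --- $g=\nabla\phi/\mu^*$ with $-\Delta\phi=h$ --- is identical.

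Where you differ is in economy. The paper proves \emph{both} partial derivatives $G'_\mu(\mu^*,f^*):V\to Y$ and $G'_f(\mu^*,f^*):L^2(\om_T)\to Y$ are surjective separately; the first requires solving a linear parabolic problem (essentially re-running the a~priori estimates of Theorem~\ref{thm_weak}), the second is the Poisson/division argument, carried out via an explicit Galerkin construction with Laplacian eigenfunctions. You observe, correctly, that for Theorem~\ref{thm_oc} only surjectivity of the \emph{full} derivative $G'$ is needed, and since $G'(\nu,g)=G'_\mu(\nu)+G'_f(g)$, surjectivity of $G'_f$ alone (i.e.\ taking $\nu=0$) already suffices. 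Your Poisson solve is also cleaner: invoking the Dirichlet Laplacian directly rather than building a Galerkin scheme. So your route is shorter and loses nothing. What the paper's extra $G'_\mu$-surjectivity buys is perhaps some robustness (the multiplier would exist even without the positivity assumption on $\mu^*$, via the $\mu$-direction alone), though the paper does not exploit this. Your added remarks on $C^1$-regularity of $J,G$ and on the interiority of $f^*$ in $Q_M$ address points the paper leaves implicit.
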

Before presenting the proof of the first order optimality conditions \eqref{firstordercond}, let us comment the positivity principle on the existence of $\mu_\ell > 0$ such that $\mu^* \geq \mu_\ell$ for all $(x,t) \in \om_T$. We can verify this property numerically, as shown in Section \ref{sec:num}, but we expect it to follow from an appropriate maximum principle, currently still under investigation. For now we consider this technical condition as acceptable.

\begin{proof} For the proof, we show that linear operators $G'_\mu (\mu^*,f^*): V \to Y$ and $G'_f (\mu^*,f^*): L^2(\om_T)\lt(\supseteq Q_M\rt) \to Y$ are surjective. Then, by Theorem \ref{thm_oc}, we conclude our desired results.

{\bf Surjectivity of $G'_\mu(\mu^*,f^*)$.-} Let $(\mu^*,f^*) \in V \times Q_M$ be a solution to \eqref{eq:MFOC}-\eqref{eq:Jfun}. We want to show that for any $\eta \in Y$ there exists a $\nu \in V$ such that 
\[
G'_\mu(\mu^*,f^*)(\nu) = \eta, \quad \mbox{i.e.,} \quad G'_\mu(\mu^*,f^*)(\nu, \psi) = \eta(\psi) \quad \mbox{for all} \quad \psi \in Y'.
\]
Note that finding the above equality is equivalent to show that for given $(\mu^*,f^*,\eta) \in V \times Q_M \times Y$, there exists a solution $\nu \in V$ to the Cauchy problem:
\bq\label{eq_nu}
\pa_t \nu + \nabla \cdot \lt(\bigF[\nu]\mu^* + \bigF[\mu^*]\nu + f^*\nu \rt) = \sigma \Delta\nu -\eta, \quad x \in \om, \quad t > 0,
\eq
with the initial data $\nu_0 \in L^2(\om)$ and the boundary condition:
\[
\lt\lal \sigma\nabla \nu - \bigF[\nu]\mu^* - \lt( \bigF[\mu^*]+ f^*\rt)\nu, n(x)\rt\ral = 0, \quad (x,t) \in \pa \om \times \R_+.
\]
We notice that \eqref{eq_nu} is linear parabolic equation of $\nu$. Thus the existence of $\nu \in V$ is enough to show the following a priori estimates which are very similar to that in the proof of Theorem \ref{thm_weak}:
$$\begin{aligned}
&\frac12\frac{d}{dt}\|\nu\|_{L^2}^2 + \sigma\|\nabla \nu\|_{L^2}^2 \leq \|\nabla \nu\|_{L^2}\lt(\|\bigF[\nu]\mu^*\|_{L^2} + \|\bigF[\mu^*]\nu\|_{L^2} + \|f^* \nu\|_{L^2}\rt) + \|\eta\|_{H^{-1}}\|\nu\|_{H^1}\cr
&\quad \leq \frac\sigma2\|\nabla \nu\|_{L^2}^2 + C\lt(\|\bigF[\nu]\|_{L^\infty}^2\|\mu^*\|_{L^2}^2 + \lt(\|\bigF[\mu^*]\|_{L^\infty}^2 + \|f^*\|_{L^\infty}^2\rt)\|\nu\|_{L^2}^2\rt)+ \|\eta\|_{H^{-1}}^2 + \|\nu\|_{L^2}^2\cr
&\quad \leq \frac\sigma2\|\nabla \nu\|_{L^2}^2 + C\lt(\|\mu^*\|_{L^2}^2 + \|f^*\|_{L^\infty}^2 + 1\rt)\|\nu\|_{L^2}^2+  \|\eta\|_{H^{-1}}^2,\cr
&\|\pa_t \nu\|_{H^{-1}} \leq \|\bigF[\nu]\|_{L^\infty}\|\mu^*\|_{L^2} + \lt(\|\bigF[\mu^*]\|_{L^\infty} + \|f^*\|_{L^\infty}\rt)\|\nu\|_{L^2} + \sigma\|\nabla \nu\|_{L^2} + \|\eta\|_{H^{-1}}\cr
&\quad \ls \lt(\|\mu^*\|_{L^2} + \|f^*\|_{L^\infty}\rt)\|\nu\|_{L^2} + \sigma\|\nabla \nu\|_{L^2} + \|\eta\|_{H^{-1}}.
\end{aligned}$$ 
Here we used 
\[
\|\bigF[\nu]\|_{L^\infty} \leq \operatorname{diam}(\om)\sqrt{|\om|}\|P\|_{L^\infty}\|\nu\|_{L^2},
\]
and similarly
\[
\|\bigF[\mu^*]\|_{L^\infty} \leq \operatorname{diam}(\om)\sqrt{|\om|}\|P\|_{L^\infty}\|\mu^*\|_{L^2}.
\]
This yields
$$\begin{aligned}
&\|\nu(\cdot,t)\|_{L^2}^2 + \int_0^t \|\nabla \nu(\cdot,s)\|_{L^2}^2 ds \cr
&\quad \leq \lt( \|\nu_0\|_{L^2}^2 + \|\eta\|_{L^2(0,T;H^{-1})}^2 \rt)\exp\lt(C\int_0^T \lt(\|\mu^*(\cdot,s)\|_{L^2}^2 + \|f^*(\cdot,s)\|_{L^\infty}^2 + 1\rt)ds \rt)
\end{aligned}$$
and
$$\begin{aligned}
\|\pa_t \nu\|_{L^2(0,T;H^{-1})} &\ls \|\nu\|_{L^\infty(0,T;L^2)}\lt(\|\mu^*\|_{L^2(\om_T)} + \|f^*\|_{L^2(0,T;L^\infty)} \rt)  \cr
&\quad + \sigma\|\nabla \nu\|_{L^2(\om_T)} + \|\eta\|_{L^2(0,T;H^{-1})}.
\end{aligned}$$

{\bf Surjectivity of $G'_f(\mu^*,f^*)$.-} For $\xi \in Y$, we first consider the following weak formulation of Poisson equation:
\bq\label{eq_poi}
\int_0^t \int_\om \nabla \psi \cdot \nabla u\,dxds = \int_0^t \int_\om \xi \psi\,dxds, \quad \mbox{for any } \psi \in H^1_0(\om),
\eq
where we already took account the space-time decomposition of the test function. To solve the equation \eqref{eq_poi}, we use the Galerkin method, i.e., we first construct a series of approximate solutions of the form:
\[
u_k(x,t) = \sum_{j=1}^k \hat u_{k,i}(t)\psi_i(x),
\]
where $(\psi_i)_{i=1}^\infty$ is an orthonormal basis for $L^2(\om)$ formed from the eigenfunctions of the Laplace operator:
\bq\label{eq_lap}
-\Delta \psi_i = \lambda_i \psi_i, \quad \psi_i \in \mc^\infty_0(\om).
\eq
It follows from the above that $\lambda_i$ can be easily computed as
\[
\lambda_i = \int_\om |\nabla \psi_i|^2\,dx > 0.
\]
Let us deal with the case $u = u_k$ in \eqref{eq_poi}. Then we obtain
\[
\sum_{i=1}^k \int_0^t \hat u_{k,i}(s) ds \int_\om \nabla \psi \cdot \nabla \psi_i\,dx  = \int_0^t \int_\om \xi \psi\,dxds.
\]
This and together with \eqref{eq_lap} yields
\[
\sum_{i=1}^k \int_0^t \hat u_{k,i}(s) ds  \int_\om \lambda_i\psi\, \psi_i\,dx  = \int_0^t \int_\om \xi \psi\,dxds.
\]
Then, by taking $\psi = \psi_i$ in the above, we get
\[
\lambda_i \hat u_{k,i}(t) = \int_\om \xi \psi_i\,dx,
\]
and by multiplying $\psi_i$ to the above and summing that over $i$, we find
\[
-\Delta u_k(x,t) = \sum_{i=1}^k \lt(\int_\om \xi(x,t) \psi_i(x)\,dx \rt) \psi_i(x),
\]
where we used \eqref{eq_lap}. This implies
$$\begin{aligned}
\int_\om |\nabla u_k(x,t)|^2\,dx &= \sum_{i=1}^k \lt(\int_\om \xi(x,t) \psi_i(x)\,dx \rt) \int_\om \psi_i(x) u_k(x,t)\,dx \cr
&=\sum_{i=1}^k \lt(\int_\om \xi(x,t) \psi_i(x)\,dx \rt)\hat u_{k,i}(t)\cr
&=\int_\om \xi(x,t) u_k(x,t)\,dx\cr
&\leq \|\xi\|_{H^{-1}}\|u_k\|_{H^1}.
\end{aligned}$$
Applying the Poincar\'e inequality to the above, we obtain
\[
\|u_k(\cdot,t)\|_{H^1} \leq C\|\xi(\cdot,t)\|_{H^{-1}},
\]
in particular, we have $u_k \in L^2(0,T;H^1(\om))$ uniformly in $k$ due to $\xi \in Y = L^2(0,T;H^{-1}(\om))$. This implies that there exist a function $u \in L^2(0,T;H^1(\om))$ such that $u_k$ converges to $u$ weakly in $L^2(0,T;H^1(\om))$ up to a subsequence. It is also easy to check that the limiting function $u$ is the solution to the equation \eqref{eq_poi}. 

We now get back to our original problem. Our goal was to show that for given $\mu^* \in V$ and $\xi \in Y$, there exists a function $g \in L^2(\om_T)$ such that 
\[
\int_0^T \int_\om \nabla \psi \cdot (g\mu^*)\,dxdt = \int_0^T \int_\om \xi\,\psi \,dxdt \quad \mbox{for any } \psi \in Y'.
\]
Then we now construct the solution $g$ to the above equation by
\[
g\mu^* = \nabla u, \quad \mbox{i.e.,} \quad g = \frac{\nabla u}{\mu^*} \quad \mbox{on the support of } \mu^*,
\]
where the existence of $u \in L^2(0,T;H^1(\om))$ was guaranteed in the beginning of the proof. Moreover, by the assumption $\mu^*(x,t)>\mu_\ell > 0$ in $\om \times [0,T]$, we have
\[
\int_0^T \int_\om |g(x,t)|^2\,dxdt = \int_0^T \int_\om \lt|\frac{\nabla u(x,t)}{\mu^(x,t)}\rt|^2\,dxdt \leq \frac{1}{\mu_\ell^2}\int_0^T \int_\om |\nabla u(x,t)|^2\,dxdt < \infty,
\]
due to $u \in L^2(0,T;H^1(\om))$. This completes the proof.

\end{proof}


\section{Hierarchy of controls via the Boltzmann equation}\label{sec:Boltz}

For large values of $N$, the solution of finite horizon control problems of the type \eqref{eq:HKu}--\eqref{eq:funcJu} through standard methods stumble upon prohibitive computational costs, due to the nonlinear constraints and the lack of convexity in the cost. Although  mean field optimal controls  \eqref{main_eq}-\eqref{main_func}  are designed to be independent of the number $N$ of agents to provide a way to circumvent the course of dimensionality of $N\to \infty$, still their numerical computation needs to be realized by solving the first-order optimality conditions. The complexity of their solution depends on the intrinsic dimensionality $d$ of the agents, which is affordable only at moderate dimensions (e.g., $d \leq 3$).  In order to tackle these difficulties, we introduce a novel reduced setting, by introducing a binary dynamics whose evolution can be described by means of a Boltzmann-type equation, \cite{APb,PTa}. Hence we will show that this description, under a proper scaling \cite{T,VILL}, converges to the mean field equation \eqref{main_eq}, \cite{AHP, CPT,T}. This type of approach allows to embed the control dynamics into two different ways:
\begin{itemize}
\item[(i)] we can assume the control $f$ to be a given function, possibly obtained from the solution of the optimal control problem \eqref{eq:HKu}--\eqref{eq:funcJu};
\item[(ii)] alternatively, the control is obtained as a solution of the reduced optimal control problem associated to the dynamics of two single agents.  We refer to this approach as {\em binary control}.
\end{itemize}

Similar ideas have been used in a control context in \cite{ABCK15,AHP,APZa,DHL14,FWb}. We devote the forthcoming sections to show different strategies to derive such binary controls. Thus we want to approach the mean field optimal control problem \eqref{eq:HKu}--\eqref{eq:funcJu} as the last step of a control hierarchy, starting from an instantaneous control strategy and going towards a binary Hamilton-Jacobi-Bellmann control.

\subsection{Binary controlled dynamics}\label{sec:bcd}
We consider the discrete controlled system \eqref{eq:HKu}--\eqref{eq:funcJu}  in the simplified case of only two interacting agents $(x_i(t),x_j(t))$ and in absence of noise, i.e. $\sigma = 0$. Hence, by defining the sample time $\Delta t$ such that $t_m = m\Delta t$, so that $0=t_0<\ldots<t_m<\ldots<t_M=T$ and introducing a forward Euler discretization, we write  \eqref{eq:HKu} as follows 
\begin{equation}
\begin{aligned}\label{eq:disc_micr2}
x_i^{m+1} = \,& x_i^m        + \frac{\Delta t}{2} P(x_i^m,x_j^m)(x_j^m-x_i^m)    +  {\Delta t} u^m_i,        
\\
x_j^{m+1}=\, & x_j^m+  \frac{\Delta t}{2} P(x_j^m,x_i^m)(x_i^m-x_j^m)   +  {\Delta t} u^m_j,
\end{aligned}
\end{equation}
where  from now on we denote the control pair $u:=(u_i,u_j)$ associated to the state variable $x:=(x_i,x_j)$, and having used the compact notation for $x^m_i=x_i(t_m), u^m_i=u_i(t_m)$.

The discretized form for the functional \eqref{eq:funcJu} for the binary dynamics \eqref{eq:disc_micr2} reads
\begin{align}\label{eq:disc_funcJu}
J_M(x,u) : = \sum_{m=0}^{M-1}\int_{t_m}^{t_{m+1}} L \left(x(t),u(t)\right)\ dt,
\end{align}
where the stage cost is given by
\begin{equation}\label{eq:disc_func}
L(x,u) =\frac{1}{2}\left(|x_i-x_d|^2+|x_j-x_d|^2\right) +\gamma \left(\CLambda(u_i)+ \CLambda(u_j)\right). 
\end{equation}
In the following we propose two alternative methods in order to characterize  $u_i,u_j$ as (sub-)optimal feedback controller. In both cases, we  will consider the controlled dynamics in the deterministic case. Nonetheless, we will show in Section \ref{sec:exp} that such controls are robust with respect to the presence of noise, ($\sigma > 0$) and they shall be employed in the corresponding stochastic setting as well.

\subsubsection{Instantaneous control}\label{sec:IC}
A first approach  towards obtaining a low complexity computational realization of the solution of the optimal control problem \eqref{eq:disc_micr2}--\eqref{eq:disc_funcJu}  is the so-called {\em model predictive control} (MPC). This strategy furnishes  a suboptimal control by an iterative solution over a sequence of finite time steps, representing the {\em predictive horizon}  \cite{AHP, APTZ, MRRS}. 
Since we are only interested in instantaneous control strategies, we  limit the MPC method to a single time prediction horizon, therefore we reduce the original optimization into the minimization on every time interval $[t_m,t_{m+1}]$ of the following functional
\begin{equation}\label{eq:disc_ist}
\begin{aligned}
J_{\Delta t}(x^m,{u}^m)&= \Delta tL(x(t_{m+1}),u(t_m))\\
					 &= \Delta t\left(\frac{1}{2}\left(|x^{m+1}_i-x_d|^2+|x^{m+1}_j-x_d|^2\right) +\gamma \left(\CLambda(u^m_i)+ \CLambda(u^m_j)\right) \right).
\end{aligned}
\end{equation}
Note that from \eqref{eq:disc_micr2} we have that $x^{m+1}$ depends linearly on $u^m$, thus $$U^m_{ij}:= U(x_i,x_j,t_m)=\underset{u^m}{\arg\min}\;J_{\Delta t}(x^m,u^m)$$ can be directly computed from the following system
 \begin{equation}
 \begin{aligned}\label{eq:IC_gen}
\Delta t ^2{U}^m_{ij} + 2\gamma\nabla_{{u}_i}\CLambda(U^m_{ij}) + \Delta t (x^m_i-x_d) +  \frac{\Delta t ^2}{2} P(x_i^m,x_j^m)(x_j^m-x_i^m) = 0,\\
\Delta t ^2{U}^m_{ji} + 2\gamma\nabla_{{u}_j}\CLambda(U^m_{ji}) + \Delta t(x^m_j-x_d) +  \frac{\Delta t ^2}{2}P(x_j^m,x_i^m)(x_i^m-x_j^m) = 0.
\end{aligned}
\end{equation}

In the case of a quadratic penalization of the control, i.e. $\CLambda(c) := |c|^2/2$, we can furnish the following explicit expression for the minimizers  
 \begin{equation}
 \begin{aligned}\label{eq:IC}
U_{ij}^m= \frac{\Delta t}{2\gamma+\Delta t^2}\left((x_d-x^m_i) -  \frac{\Delta t}{2} P(x_i^m,x_j^m)(x_j^m-x_i^m)\right) ,\\
U_{ji}^m = \frac{\Delta t}{2\gamma+\Delta t^2}\left((x_d-x^m_j) -  \frac{\Delta t}{2} P(x_j^m,x_i^m)(x_i^m-x_j^m)\right) ,\\
\end{aligned}
\end{equation}
hence \eqref{eq:IC_gen} gives a feedback control for the full binary dynamics, which can be plugged as an {\em instantaneous control} into  \eqref{eq:disc_micr2}.

\begin{remark}
Note that the instantaneous control \eqref{eq:IC} embedded into the discretized dynamics \eqref{eq:disc_micr2}, is of order $o(\Delta t)$.  To obtain an effective contribution of the control in the dynamics we will assume that the penalization parameter $\gamma$ scales with the time discretization, in this way the leading order is recovered, \cite{AHP,APTZ}, e.g. for $\gamma = \Delta t \bar\gamma$ we have
 \begin{equation}
 \begin{aligned}\label{eq:IC2}
U_{ij}^m= \frac{1}{2\bar \gamma+\Delta t}\left((x_d-x^m_i) -  \frac{\Delta t}{2} P(x_i^m,x_j^m)(x_j^m-x_i^m)\right).
\end{aligned}
\end{equation}
\end{remark}

\subsubsection{Finite horizon optimal control}\label{sec:HJB}
The instantaneous feedback control derived in the previous section is the optimal control action for the binary system with a single step prediction horizon. An improved, yet more complex optimal feedback synthesis can be performed by considering an extended finite horizon control problem. Let us define the value function associated to the finite horizon discrete cost \eqref{eq:disc_funcJu} as
\begin{align}\label{eq:Val}
V(x_i,x_j,t_m) := \underset{u\in\U}{\inf}\sum_{k=m}^{M-1}  \Delta t L(x_i(t_k),x_j(t_k),u(t_k)),\qquad \text{for } m = 0,\ldots,M-1,
\end{align}
with terminal condition $V(x_i,x_j,t_M)=0$. It is well-known that the application of the Dynamic Programming Principle \cite{BELL} with the discrete time dynamics \eqref{eq:disc_micr2} characterizes the value function as the solution of the following recursive Bellman equation 
\begin{equation}
\begin{aligned}\label{eq:DP}
V(x_i,x_j,t_M) & = 0,\\
V(x_i,x_j,t_m) & = \inf_{u\in \U}\left\{ \Delta t L(x_i,x_j,u) + V(x+\Delta t (F(x_i,x_j)+u),t_{m+1})  \right\},\ m = M-1,\ldots,0\,,
\end{aligned}
\end{equation}
where $x=(x_i,x_j)$, $u=(u_i,u_j)$,  and $F(x_i,x_j) := (P(x_i,x_j)(x_j-x_i),P(x_i,x_j)(x_j-x_i))$. 
Once this functional relation has been solved, for every time step the optimal control is recovered from the optimality condition as follows
\begin{equation}\label{eq:ochj}
U(x_i,x_j,t_m)=\underset{u\in \U}{\arg\min}\left\{ \Delta t L(x_i,x_j,u) + V(x+\Delta t (F(x_i,x_j)+u),t_{m+1})  \right\}\,.
\end{equation}
As in the expression \eqref{eq:IC_gen}, this optimal control is also in feedback form, depending not only on the current states of binary system $(x_i,x_j)$, but also on the discrete time variable $t_m$.

\begin{remark}\label{quasiopt} The system \eqref{eq:DP} is a first-order approximation of the Hamilton-Jacobi-Bellman equation
\bq\label{eq:V}
\displaystyle \partial_tV(x,t) + \inf_{u \in \U} \left\{L(x,u)+ \nabla V(x,t)\cdot\left[F(x)+u\right]\right\}=0,
\eq
related to the continuous time optimal control problem. In fact, this latter equation corresponds to the adjoint \eqref{eq:Adj} when the nonlocal integral terms are neglected, and therefore this approach although optimal for the binary system, cannot be expected to satisfy the optimality system \eqref{eq:OptCon}--\eqref{eq:Adj} related to the mean field optimal control problem.
\end{remark}

\subsection{Boltzmann description}
We introduce  now a Boltzmann framework in order to describe the statistical evolution of a system of agents ruled by binary interactions, \cite{APTZ,PTa}.

Let $\mu(x,t)$ denote the kinetic density of agents in position $x\in\Omega$ at time $t\geq0$, such that  the total mass is normalized
$$\rho(t) = \int_{\Omega} \mu(x,t) \ dx  = 1,$$
and the time evolution of the density $\mu$ is given as a balance between the bilinear gain and loss of the agents position due to the binary interaction. In a general formulation, we assume that two agents have positions $x, y\in\Omega$ and modify their positions according to the following rule 
\begin{equation}
\begin{aligned}\label{eq:bin}
x^*= \,& x        + \alpha P(x,y)(y-x)   + \alpha U_\alpha(x,y,t)   +  \sqrt{2\alpha}\xi,       
\\
y^* =\, & y+ \alpha P(y,x)(x-y)         +  \alpha U_\alpha(y,x,t)   +  \sqrt{2\alpha}\zeta\,,
\end{aligned}
\end{equation}
where $(x^*,y^*)$ are the post-interaction positions, the parameter $\alpha$ measures the influence strength of the different terms, $(\xi,\zeta)$ is a vector of i.i.d. random variables with a symmetric distribution $\Theta(\cdot)$ with  zero mean and variance $\sigma$, and $U_\alpha(x,y,t)$ indicates the forcing term due to the control dynamics.

 We consider now a kinetic model for the  evolution of  the density $\mu=\mu(x,t)$ of agents with $x \in \R^d$  at time $t\geq 0$ and ruled by the following Boltzmann-type equation
\begin{align}\label{eq:Boltz}
\pa_t \mu(x,t) = Q_{\alpha}(\mu,\mu)(x,t),
\end{align}
where the interaction operator $Q_{\alpha}(\mu,\mu)$ in \eqref{eq:Boltz}, accounts the loss and gain of agents in position $x$ at time $t$, as follows
\begin{align}\label{eq:QBoltzB}
Q_{\alpha}(\mu,\mu)(x,t) = \Exp\left[\int_{\Omega}\left(\mathcal{B}_* \frac{1}{\mathcal{J}_\alpha}\mu(x_*,t) \mu(y_*,t) - \mathcal{B}\mu(x,t)\mu(y,t)\right)\,dy\right],
\end{align}
where $(x_*,y_*)$ are the pre-interaction positions that generate arrivals $(x,y)$. 
The bilinear operator $Q_\alpha(\cdot,\cdot)$ includes the expectation value with respect to $\xi^x$ and $\xi^y$,
 while $\mathcal{J}_\alpha$ represents the Jacobian of the transformation  $(x,y)\to(x^*,y^*)$, described by \eqref{eq:bin}. 
 Here $\mathcal{B}_*=\mathcal{B}_{(x_*,y_*)\to(x,y)} $ and  $\mathcal{B}=\mathcal{B}_{(x,y)\to(x^*,y^*)} $ are the transition rate functions. More into the details we take into account
 \[
\mathcal{B}_{(x,y)\to(x^*,y^*)} = \eta\chi_\Omega(x^*)\chi_\Omega(y^*), 
 \]
as the functions with an interaction rate $\eta>0$, and where $\chi_\Omega$ is the characteristic function of the domain $\Omega$. 
Note that in this case the transition functions depends on the relative position, similarly to \cite{T}, as we introduced a bounded domain $\Omega$ into the dynamics. 
A major simplification occurs in the case the bounded domain is preserved by the binary interactions itself, therefore  the transition is constant and the interaction operator \eqref{eq:QBoltzB} reads
\begin{align}\label{eq:QBoltz}
Q_{\alpha}(\mu,\mu)(x,t) = \eta\Exp\left[\int_{\Omega}\left( \frac{1}{\mathcal{J}_\alpha}\mu(x_*,t) \mu(y_*,t) - \mu(x,t)\mu(y,t)\right)\,dy\right].
\end{align}
In \cite{AHP,T} authors showed that in opinion dynamics binary interactions are able to preserve the boundary, according to the choice of a small support of the symmetric random variable $\xi$ and introducing a suitable function $D(x)$ acting as a local weight on the noise in \eqref{eq:bin}. 

In the next section we will perform the analysis of this model in the simplified case of $\Omega = \mathbb{R}^d$ and  constant rate of interaction $\eta$.

\begin{remark}
Note that the binary dynamics \eqref{eq:bin} is equivalent to the Euler--Maruyama discretization for the equation \eqref{eq:HKu} in the two agents case
\begin{equation}
\begin{aligned}\label{eq:stodisc_micr2}
x_i^{m+1} = \,& x_i^m        + \frac{\Delta t}{2} P(x_i^m,x_j^m)(x_j^m-x_i^m)    +  {\Delta t} U^m_{ij}    +  \sqrt{2\sigma}\Delta B^m_i,       
\\
x_j^{m+1}=\, & x_j^m+  \frac{\Delta t}{2} P(x_j^m,x_i^m)(x_i^m-x_j^m)   +  {\Delta t} U^m_{ji} +  \sqrt{2\sigma}\Delta B^m_j,
\end{aligned}
\end{equation}
where we impose that  $\alpha = \Delta t/2$, $\alpha U_\alpha(x_i,x_j) = \Delta t U^m_{ij}$, and $\sqrt{2\alpha}\xi= \sqrt{2\sigma}\Delta B^m_i$ is a random variable normally distributed with zero mean value and variance $\Delta t$,  for $\Delta B^m_i$ defined as the  $\Delta B^m_i=B_i(t_{m+1})-B_i(t_m)$.
\end{remark}

\subsubsection{The quasi-invariant limit}
We consider now the Boltzmann operator \eqref{eq:QBoltz} in the case $\Omega = \R^d$, and in order to obtain a more regular description we introduce the so-called {\em quasi-invariant interaction limit},  whose basic idea is considering a regime where interactions strength is low and frequency is high. This technique, analogous to the grazing collision limit in plasma physics, has been thoroughly studied in \cite{VILL} and specifically for first order models in \cite{CPT,T}, and allows to pass from Boltzmann equation \eqref{eq:Boltz} to a mean field equation of the Fokker-Planck-type, \cite{AHP, APZa}. 
In order to state the main result we start fixing some notation and terminology. 
\begin{definition}[Multi-index]
For any $a \in \mathbb{N}^d$ we set $|a| = \sum^d_{i = 1} a_i$, and for any function $h \in C^q(\R^d \times \R^d,\R)$, with $q\geq0$ and any $a \in \mathbb{N}^d$ such that $|a| \leq q$, we define for every $(x,v) \in \R^d \times \R^d$
\begin{align*}
\partial^{a}_x h(x) := \frac{\partial^{|a|} h}{\partial^{a_1}x_1 \cdots \partial^{a_d}x_d} (x),
\end{align*}
with the convention that if $a= (0, \ldots, 0)$ then $\partial^{a}_x h(x) := h(x)$.
\end{definition}
\begin{definition}[Test functions] We denote by $\mathcal{T}_{\delta}$ the set of compactly supported functions $\varphi$ from $\R^{d}$ to $\R$ such that for any multi-index $a  \in \mathbb{N}^d$ we have,
\begin{enumerate}
\item 
if $|a| < 2$, then $\partial^{a}_x \varphi (\cdot)$ is continuous for every $x \in \R^d$;
\item 
 if $|a| = 2$, then there exists $C > 0$ such that,
$\partial^{a}_x \varphi (\cdot)$ is uniformly H\"older continuous of order $\delta$ for every $x \in \R^d$ with H\"older bound $C$, that is for every $x,y \in \R^d$
\begin{align*}
\left\| \partial^{a}_x \varphi (x) - \partial^{a}_x \varphi (y) \right\| \leq C \left\| x - y \right\|^{\delta},
\end{align*}
and $\|\partial^{a}_x \varphi (x)\| \leq C$ for every $x \in \R^{d}$.
\end{enumerate}
\end{definition}

\begin{definition}[$\delta$-weak solution] 
Let $T > 0$, $\delta > 0$, we call a \emph{$\delta$-weak solution} of the initial value problem for the equation \eqref{eq:Boltz}, with initial datum $\mu^0=\mu(x,0) \in \mathcal{M}_0(\R^{d})$ in the interval $[0,T]$, if $\mu \in L^2([0,T], \mathcal{M}_0(\R^{d}))$ such that, $\mu(x,0) = \mu^0(x)$ for every $x \in \R^{d}$,
and there exists $R_T > 0$ such that $\textrm{supp}(\mu(t)) \subset B_{R_T}(0)$ for every $t \in [0,T]$ and $\mu$ satisfies the weak form of the equation \eqref{eq:Boltz}, i.e.,
\begin{align}\label{eq:wBoltz}
\frac{d}{d t} \left \langle \mu, \varphi \right\rangle  =  \left\langle Q_\alpha(\mu,\mu),\varphi \right\rangle ,
\end{align}
for all $t \in (0,T]$ and all $\varphi \in \mathcal{T}_{\delta}$, where  
\begin{align}\label{eq:wQBoltz}
\left\langle Q_\alpha(\mu,\mu),\varphi \right\rangle & = \Exp\left[\int_{\mathbb{R}^{2d}} \eta\left(\varphi(x^*) - \varphi(x) \right)\mu(x)\mu(y) \ dx \ dy\right].
\end{align}
\end{definition}
Moreover, we assume that
\begin{itemize}
\item[$(a)$]  the system \eqref{eq:bin} constitutes invertible changes of variables from $(x,y)$ to $(x^*,y^{*})$;
\item[$(b)$]  there exists an integrable function $K(x,y,t)$ such that the following limit is well defined
\begin{align}\label{eq:limK}
\lim_{\alpha\to 0}U_\alpha(x,y,t) = K(x,y,t).
\end{align}
In the case of instantaneous control of type  \eqref{eq:IC}, we can explicitly give an expression to the limit as  $K(x,y,t) = (x_d-x)/\gamma $.
\end{itemize}
We state the following theorem.

\begin{thm}\label{thm:grazing}
Let us fix a control  $U_\alpha\in\U$ and $\alpha\geq0$, and $T > 0$, $\delta > 0$, $\varepsilon>0$, and assume that density $\Theta\in\P_{2+\delta}(\R^d)$ and the function $P(\cdot,\cdot)\in L^q_{loc}$ for $q = 2, 2+\delta$ and for every $t\geq0$. We consider a $\delta$-weak solution $\mu$ of equation \eqref{eq:Boltz} with initial datum $\mu_0(x)$. 
Thus introducing the following scaling 
\begin{equation}\label{eq:scaling}
\alpha = \varepsilon, \qquad \eta ={1/\varepsilon},
\end{equation}
for the binary interaction \eqref{eq:bin} and  defining by $\mu^\varepsilon(x,t)$ a solution for the scaled equation  \eqref{eq:Boltz},
for $\varepsilon\to0$  $\mu^\varepsilon(x,t)$ converges pointwise, up to a subsequence, to  $\mu(x,t)$  where $\mu$ satisfies  the following Fokker-Planck-type equation,
\begin{align}\label{eq:FP}
\pa_t \mu + \nabla \cdot\left((\mathcal{P}[\mu] + \bigK[\mu])\mu\right)= \sigma\Delta \mu,
\end{align}
with initial data $\mu_0(x)=\mu(x,0)$ and 
where $\mathcal{P}$ represents the interaction kernel  \eqref{eq:kernelP} and $f(x,t)$ is the control.
\begin{align}\label{eq:kernelK}
\mathcal{K}[\mu](x,t)  = \int_{\mathbb{R}^d}K(x,y,t)\mu(y,t)\,dy. 
\end{align} 
with $K(x,y,t)$ defined as in \eqref{eq:limK}.
\end{thm}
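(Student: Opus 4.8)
The plan is to run the standard grazing-collision (quasi-invariant) limit of Toscani--Villani \cite{VILL}, in the first-order form of \cite{CPT,T}, keeping track of the additional forcing term $U_\alpha$. First I would Taylor-expand the collision kernel: fixing $\varphi\in\mt_\delta$ and writing, from \eqref{eq:bin},
\[
x^*-x = \alpha\bigl(P(x,y)(y-x)+U_\alpha(x,y,t)\bigr) + \sqrt{2\alpha}\,\xi =: \alpha\,b_\alpha(x,y,t)+\sqrt{2\alpha}\,\xi,
\]
a second-order expansion gives $\varphi(x^*)-\varphi(x) = \nabla\varphi(x)\cdot(x^*-x) + \tfrac12(x^*-x)^\top D^2\varphi(x)(x^*-x) + \mathcal{R}_\varphi$, and since membership in $\mt_\delta$ forces $D^2\varphi$ to be uniformly $\delta$-H\"older continuous with bound $C$, one has $|\mathcal{R}_\varphi|\leq C|x^*-x|^{2+\delta}$.

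Next I would take the expectation over $\xi$ (zero mean, covariance $\sigma\,\mathrm{Id}$, finite $(2+\delta)$-moment since $\Theta\in\P_{2+\delta}$): the terms odd in $\xi$ drop, leaving
\[
\Exp\bigl[\varphi(x^*)-\varphi(x)\bigr] = \alpha\,\nabla\varphi(x)\cdot b_\alpha(x,y,t) + \alpha\,\sigma\,\Delta\varphi(x) + O(\alpha^2)+O(\alpha^{1+\delta/2}),
\]
where $O(\alpha^2)$ absorbs $\tfrac12\alpha^2 b_\alpha^\top D^2\varphi\,b_\alpha$ and the last estimate uses $|x^*-x|\lesssim\alpha|b_\alpha|+\sqrt{\alpha}\,|\xi|$ together with $P\in L^q_{loc}$, $q=2,2+\delta$, and the compact support of $\mu$. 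Substituting into the weak form \eqref{eq:wBoltz}--\eqref{eq:wQBoltz} and imposing the scaling \eqref{eq:scaling}, the prefactor $\eta=1/\varepsilon$ exactly cancels the $\alpha=\varepsilon$ in front of the drift and diffusion terms while the remainders are $O(\varepsilon)+O(\varepsilon^{\delta/2})$; using $\int\mu^\varepsilon(y)\,dy=1$ in the diffusion term, one obtains
\[
\frac{d}{dt}\langle\mu^\varepsilon,\varphi\rangle = \int_{\R^{2d}}\nabla\varphi(x)\cdot\bigl(P(x,y)(y-x)+U_\varepsilon(x,y,t)\bigr)\mu^\varepsilon(x)\mu^\varepsilon(y)\,dx\,dy + \sigma\!\int_{\R^d}\!\Delta\varphi(x)\,\mu^\varepsilon(x)\,dx + o(1).
\]

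Then I would set up compactness and pass to the limit. Mass is conserved, $\langle\mu^\varepsilon(t),1\rangle=1$; controlling $\tfrac{d}{dt}\int|x|^2\mu^\varepsilon$ (equivalently, propagating the support through \eqref{eq:bin} under the scaling) gives a bound $\mathrm{supp}\,\mu^\varepsilon(t)\subset B_{R_T}(0)$ uniform in $\varepsilon$, as required by the definition of $\delta$-weak solution. Together with the uniform bound on $\tfrac{d}{dt}\langle\mu^\varepsilon,\varphi\rangle$ from the previous step, this yields tightness and equicontinuity in $t$, so by Prokhorov and Arzel\`a--Ascoli a subsequence converges pointwise in $t$, weak-$*$ as measures, to some $\mu\in L^2([0,T],\mathcal{M}_0(\R^d))$; tightness upgrades this to $\mu^\varepsilon\otimes\mu^\varepsilon\weakto\mu\otimes\mu$, handling the quadratic interaction term, and assumption $(b)$ gives $U_\varepsilon\to K$. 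The limit then satisfies
\[
\frac{d}{dt}\langle\mu,\varphi\rangle = \int_{\R^{2d}}\nabla\varphi(x)\cdot\bigl(P(x,y)(y-x)+K(x,y,t)\bigr)\mu(x)\mu(y)\,dx\,dy + \sigma\!\int_{\R^d}\!\Delta\varphi(x)\,\mu(x)\,dx,
\]
which, recalling \eqref{eq:kernelP} and \eqref{eq:kernelK}, is precisely the weak formulation of \eqref{eq:FP}.

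The hard part will be the uniform-in-$\varepsilon$ a priori estimates — especially the support bound $R_T$ that must stay finite as $\varepsilon\to0$, since this is exactly what makes each Taylor remainder a genuine $o(1)$ — together with passing to the limit in the nonlinear term $\mu^\varepsilon\otimes\mu^\varepsilon$ and in $\int P(x,y)(y-x)\nabla\varphi(x)\,\mu^\varepsilon(dx)\mu^\varepsilon(dy)$ when $P$ is only locally $L^q$: one must either show the Boltzmann solution retains an $L^2$ density bounded uniformly in $\varepsilon$, or regularize $P$ and remove the regularization using the compact-support tightness. Everything else is bookkeeping of powers of $\varepsilon$.
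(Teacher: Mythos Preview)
Your approach is essentially the same as the paper's on the core computation: both Taylor-expand $\varphi(x^*)$ to second order, take the expectation in $\xi$ to kill odd terms and isolate the $\sigma\Delta\varphi$ contribution, apply the scaling $\eta\alpha=1$, $\eta\alpha^2=\varepsilon$, and then bound the two remainder pieces (the H\"older remainder $R_1^\varphi$ and the second-order drift remainder $R_2^\varphi$) by $O(\varepsilon^{\delta/2})$ and $O(\varepsilon)$ respectively, using $\Theta\in\P_{2+\delta}$ and $P\in L^q_{loc}$ for $q=2,2+\delta$. Your bookkeeping of the powers of $\varepsilon$ matches the paper's exactly.

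Where you go further than the paper is the compactness step. The paper's proof stops after showing the scaled weak Boltzmann equation equals the weak Fokker--Planck right-hand side plus $o(1)$, and simply asserts pointwise convergence up to a subsequence; it does not spell out the uniform-in-$\varepsilon$ support bound, the Prokhorov/Arzel\`a--Ascoli extraction, or the passage to the limit in the quadratic term $\mu^\varepsilon\otimes\mu^\varepsilon$. Your proposal names these as the ``hard part'' and sketches how to handle them --- this is a genuine addition of rigor rather than a different route. The paper effectively treats the limit as a formal identification of the limiting equation (in the spirit of \cite{T,CPT}), whereas your version would actually \emph{prove} convergence of the $\mu^\varepsilon$. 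Both are legitimate readings of what the theorem claims; yours is the more complete argument.
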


\begin{proof}

$\bullet$ {\bf Taylor approximation.} We consider the weak formulation of the Boltzmann equation \eqref{eq:wBoltz} and    
we expand $\varphi(x^{*})$ inside the operator \eqref{eq:wQBoltz} in Taylor series of $x^* - x$ up to the second order, obtaining 
\begin{align}\label{eq:TwQBoltz}
&\left\langle Q_\alpha(\mu,\mu),\varphi \right\rangle  = T^\varphi_1+T^\varphi_2+ R_1^{\varphi},
\end{align}
where the first and second order terms are
\begin{align}
&T^\varphi_1:= \eta\Exp\Bigg[\int_{\mathbb{R}^{2d}} \nabla \varphi(x) \cdot \left(x^{*} - x\right)\mu(x)\mu(y) \, dxdy\Bigg], \\
&T^\varphi_2 :=  \frac{\eta}{2}\Exp\Bigg[\int_{\mathbb{R}^{2d}} \left(\sum^{d}_{i,j = 1} \partial^{(i,j)}_{x} \varphi(x) \left(x^{*} - x\right)_i\left(x^{*} - x\right)_j\right) \mu(x)\mu(y) \, dxdy \Bigg],
\end{align}
and $R_1^{\varphi}(\epsi)$ is the reminder of the Taylor expansion, with a form
\begin{align*}
R_1^{\varphi}&:= \frac{\eta}{2}\Exp\Bigg[\int_{\mathbb{R}^{2d}} \left(\sum^{d}_{i,j = 1} \left(\partial^{(i,j)}_{x} \varphi(x) - \partial^{(i,j)}_{x} \varphi(\overline{x})\right) \left(x^{*} - x\right)_i\left(x^{*} - x\right)_j\right)\mu(x)\mu({y}) \, dxdy \Bigg],
\end{align*}
with $\overline{x} := (1-\theta) x^* + \theta x$, for some $\theta \in [0,1]$. 
By using the relation given by the scaled interaction rule \eqref{eq:bin}, i.e.
$$x^* - x = \alpha F_\alpha(x,y) +\sqrt{2\alpha}\xi$$
where for the sake of brevity  we denoted $F_\alpha(x,y) := P(x,y)(y-x) +U_\alpha(x,y)$. Note that from the hypothesis it follows that $F_\alpha\in L^q_{loc}$. Thus  we obtain 
\begin{align*}
T^\varphi_1 &= \eta\Exp\Bigg[\alpha\int_{\mathbb{R}^{2d}} \nabla\varphi(x) \cdot \left(F_\alpha(x,y) +\sqrt{2/\alpha}\ \xi\right)\mu(x)\mu(y)\, dxdy\Bigg]\\&
= \eta\alpha\int_{\mathbb{R}^{2d}} \nabla\varphi(x) \cdot F_\alpha(x,y)\mu(x)\mu(y)\, dxdy
\end{align*}
where the  noise term, $\xi$ is canceled out since it has zero mean. For the same reason in the second order term $T^\varphi_2$ all mixed product between $F_\alpha$ and $\xi$ vanish, the same hold for all the crossing terms $\xi_{i} \xi_{j}$ since $\xi_{i}$ are supposed to be independent variables. Hence the only contribution we have reads
\begin{align*}
T^\varphi_2 &= \frac{\eta}{2}\Exp\Bigg[ \int_{\mathbb{R}^{2d}}\alpha^2\left(\sum^{d}_{j = 1} \partial^{(j,j)}_{x} \varphi(x)\left(F_\alpha(x,y)_j\right)^2 \right)+ \left(\sum^{d}_{j = 1} \partial^{(j,j)}_{x} \varphi(x)\left(2\alpha\xi_{j}^2\right)\right) \mu(x)\mu(y) \, dxdy \Bigg]\\
&= \eta\alpha\int_{\mathbb{R}^{2d}} \sigma\Delta\varphi(x)\mu(x)\mu(y) \, dxdy +\frac{\eta\alpha^2}{2}\int_{\mathbb{R}^{2d}} \left(\sum^{d}_{j = 1} \partial^{(j,j)}_{x} \varphi(x)\left(F_\alpha(x,y)_j\right)^2 \right)\mu(x)\mu(y) \, dxdy,\\
&=: T^\varphi_{22} + R_{2}^\varphi.
\end{align*}
$\bullet$ {\bf Quasi-invariant limit.}  We now introduce the scaling \eqref{eq:scaling}, for which we can substitute in the previous equations, $\eta\alpha=1$ and  $\eta\alpha^2=\epsi$, thus we have that terms $T_1^\varphi$ and $T_{22}^\varphi$ represent the leading order and $R^\varphi(\epsi) := R^\varphi_1+R^\varphi_2$ a reminder, so we can recast the scaled expression \eqref{eq:TwQBoltz} as follows
\begin{equation}\label{eq:TwQBoltz2}
\int_{\R^{2d}}\left(\nabla\varphi\cdot F_\epsi(x,y)+\sigma\Delta\varphi(x)\right) \mu(x)\mu(y) \, dxdy+ R^\varphi(\epsi).
\end{equation}
Let us now consider the limit $\varepsilon \rightarrow 0$,  assuming that for every $\varphi \in \mathcal{T}_{\delta}$
\begin{align} \label{eq:limRest}
\lim_{\varepsilon \rightarrow 0} R^\varphi(\epsi) = 0
\end{align}
holds true, we have thanks to \eqref{eq:limK}  and  \eqref{eq:TwQBoltz2} that the weak scaled Boltzman equation \eqref{eq:wBoltz} converges pointwise to the Fokker-Planck-type equation \eqref{eq:FP} as follows
\begin{align} \label{eq:wFP}
\frac{d}{dt} \left \langle \mu, \varphi \right\rangle  =  \left\langle \mu, \nabla \varphi \cdot (\mathcal{P}\left[\mu\right]+\bigK[\mu]) + \sigma \Delta \varphi \right\rangle,
\end{align}
where  the operators $\mathcal{P}[\mu]$ and  $\mathcal{K}[\mu]$ are defined in \eqref{eq:kernelP} and \eqref{eq:kernelK}. Since $\varphi$ has compact support, equation \eqref{eq:wFP} can be revert in strong form by means of integration by parts, we eventually obtain system \eqref{eq:FP}.

$\bullet$ {\bf Estimates for the reminder.}
In order to conclude the proof it is sufficient to show that the limit \eqref{eq:limRest} for $R^\varphi(\epsi)$ vanishes.
From the definition of  $\overline{x}$ it follows that $\left\| \overline{x} -x\right\| \leq \left\| x^* - x \right\|$, then for every $\varphi \in \mathcal{T}_{\delta}$ we have
\begin{align*}
\left\|\partial^{(i,j)}_{x}\varphi(x) - \partial^{(i,j)}_{x}\varphi(\overline{x})\right\| & \leq C \left\|  \overline{x} -x\right\|^{\delta}  \leq C \left\| x^* - x \right\|^{\delta}.
\end{align*}
Hence for $R_1^\varphi$ we get
\begin{align*}
\left\|R_1^{\varphi}\right\| & \leq \frac{C}{2\varepsilon} \Exp\left[\int_{\R^{2d}} \left\| x^* - x \right\|^{2+\delta} \mu(x) \mu(y) \, dxdy\right] \\
& = \frac{C}{2} \epsi^{1+\delta} \Exp\left[\int_{\R^{2d}} \left\| F_\epsi(x,y) + \sqrt{2/\epsi}\ \xi \right\|^{2+\delta} \mu(x) \mu(y)  \, dxdy \right]
\end{align*}
from the inequality $|a+b|^{2+\delta}\leq2^{2+2\delta}(|a|^{2+\delta}+|b|^{2+\delta})$ for some $a,b$  we obtain
\begin{align*}
\left\|R_1^{\varphi}\right\| & \leq  2^{1+2\delta}C\left(\epsi^{1+\delta} \int_{\R^{2d}} \left\| F_\epsi(x,y)\right\|^{2+\delta} \mu(x) \mu(y) \, dxdy+ 2^{1+\delta/2}\epsi^{\delta/2}\Exp\left[ \left\| \xi \right\|^{2+\delta}\right]\right).
\end{align*}
Analogous computation can be yield for $R_2^\varphi$ for which we have the following inequality
\begin{align*}
\left\|R_2^{\varphi}\right\| & \leq  \frac{\epsi C}{2} \int_{\R^{2d}} \left\| F_\epsi(x,y)\right\|^{2} \mu(x) \mu(y)  \, dxdy.
\end{align*}
Since $F_\epsi\in L^q_{loc}$ for $q=2,2+\delta$ and $\Theta\in\P_{2+\delta}(\R^d)$ we can conclude that for $\epsi\to0$ the limit \eqref{eq:limRest} holds true. 
\end{proof}

\begin{remark}
Note that in the case $U_\alpha(x,y,t)=U_\alpha(x,t)$, namely if the feedback control depends only by the position $x$ of the agents at time $t$, then the kernel $\bigK[\mu](x,t)$ reduces to $K(x,t)$. This observation holds also if we consider a sampling from the optimal control, i.e. $U_\alpha(x,y,t) = f(x,t)$, thus equation \eqref{eq:FP} becomes exactly the original equation \eqref{eq:HKu}.
\end{remark}
\section{Numerical methods}\label{sec:num}

In this section we are concerned with the development of numerical methods for the mean field optimal control problem \eqref{eq:HKu}-- \eqref{eq:funcJu}. First we present direct simulation Monte Carlo methods for the constrained Boltzmann-type model \eqref{eq:Boltz}, and discuss the implementation of the binary feedback controllers introduced in Section \ref{sec:bcd}. Next, we describe a sweeping algorithm based on the iterative solution of the optimality system, \eqref{eq:MFOC}--\eqref{eq_fpsi}.

\subsection{Asymptotic constrained binary algorithms}
One of the most common approaches to solve Boltzmann-type equations is based on Monte Carlo methods. Let us consider the initial value problem given by the equation \eqref{eq:Boltz}, in the grazing interaction regime \eqref{eq:scaling}, with initial data $\mu(x,t=0)=\mu_0(x)$, as follows
\begin{equation}\begin{cases}\vspace{0.5em}\label{eq:Coll}
\dfrac{d}{dt}\mu(x,t) =  \dfrac{1}{\epsi}\left[{Q}_\epsi^{+}(\mu,\mu)(x,t)-\mu(x,t)\right], \\
\mu(x,0)  =\mu_0(x).
\end{cases}\end{equation}
Here we have made explicit the dependence of  the interaction operator $Q_\epsi(\cdot,\cdot)$ on the frequency of interactions $1/\varepsilon$, and decomposing it into its gain and loss parts according to \eqref{eq:QBoltz}. With $Q^{+}_\varepsilon(\cdot,\cdot)$ we denote the gain part, which accounts the density of agents gained at position $x$ after the binary interaction  \eqref{eq:bin}.
 
We tackle the Boltzmann-type equation \eqref{eq:Coll} by means of a binary interaction algorithm \cite{APb, PTa}, where the basic idea is to solve the binary exchange of information described by \eqref{eq:bin}, under the grazing interaction scaling \eqref{eq:scaling}, in order to obtain in the limit an approximate solution of the mean field equation  \eqref{eq:FP}. Note that the consistency of this procedure is given by Theorem \ref{thm:grazing}.

 Let us now consider a time interval $[0,T]$ discretized in $M_{tot}$ intervals of size $\Delta t$. We denote by $\mu^m$ the  approximation of $\mu(x,m\Delta t)$, thus the first order forward scheme of the scaled Boltzmann-type equation \eqref{eq:Coll} reads
\begin{equation}\label{eq:MCBoltz}
        \mu^{m+1}=\left(1-\frac{\Delta t}{\varepsilon}\right)\mu^{m}+\frac{\Delta t}{\varepsilon}{{Q}_\varepsilon^{+}(\mu^m,\mu^m)},
\end{equation}
 where, since $\mu^m$ is a probability density, thanks to mass conservation, and also $Q_\varepsilon^{+}(\mu^m,\mu^m)$ is a probability density. Under the restriction $\Delta t\leq\varepsilon$, 
 $\mu^{m+1}$ is a probability density, since it is a convex combination of probability densities. 

From a Monte Carlo point of view the equation \eqref{eq:MCBoltz} can be interpreted as follows: an individual with position $x$ will not interact with other individuals with probability $1-\Delta t/\varepsilon$ and it will interact with others with probability $\Delta t/\varepsilon$ according to the interaction law stated by $Q_\varepsilon^{+}(\mu^m,\mu^m)$. 
Note that, since we aim at small values of $\varepsilon$ and we have to fulfill the condition $\Delta t\leq\epsi$, the natural choice is to take $\Delta t=\varepsilon$. At every time step, this choice maximizes the number of interactions among the agents.

For the numerical treatment of the operator $Q_\varepsilon^{+}(\mu^m,\mu^m)$, we have to account in every interaction the action of the feedback control. 
In the case of {\em instantaneous control} this can be evaluated directly, for example in the case of quadratic functional defining the scaling version of \eqref{eq:IC2} as
\[
U_\epsi(x,y,t)  = \frac{1}{\gamma+\epsi}\left((x_d-x)+\alpha P(x,y)(y-x)\right).
\] 

On the other hand, the realization of the optimal feedback controller in the {\em  finite horizon setting} requires the numerical approximation of the Bellman equation \eqref{eq:DP}. This approximation is performed offline and only once, previous to the simulation of the mean field model. For a state space of  moderate dimension, such as in our binary model, several numerical schemes for the approximation of Hamilton-Jacobbi-Bellman equations are available, and we refer the reader to \cite[Chapter 8]{FFbook} for a comprehensive description of the different available techniques. Since the binary model is already introduced in discrete time, a natural choice is to solve eq. \eqref{eq:DP} by means of an sequential semi-Lagrangian scheme, following the same guidelines as in the recent works \cite{AFK15,KKK16,F16}. Once the value function has been approximated, online feedback controllers can be implemented through the evaluation of the optimality condition \eqref{eq:ochj}.

We report in Algorithm \ref{ANMC}  a stochastic procedure to solve \eqref{eq:MCBoltz}, based on Nanbu's method for plasma physics, \cite{APb, BN}.
\begin{algorithm}\caption{\em Asymptotic constrained binary algorithm}\label{ANMC}
  \begin{enumerate}
  \item[\texttt{0.}] Pre-compute the feedback control $U_\epsi(x,y,t)$ on an appropriate discretized grid of the domain $\Omega\times[0,T]$.
 	\vspace{-0.2cm}
  \item[\texttt{1.}] 
  Given $N_s$ samples $\left\{x^0_k\right\}_{k=1}^{N_s}$, from the initial distribution $\mu_0(x)$;
   	\vspace{-0.2cm}
  \item[]
  \texttt{for} $m=0$ \texttt{to} $M_{tot}-1$
  	\vspace{-0.2cm}
  \begin{enumerate}
  \item[\texttt{a.}] set $N_c = \textsc{Iround}({N_s}/{2})$;
  \item[\texttt{b.}] select $N_c$ random pairs $(i,j)$ uniformly without repetition among all possible pairs of individuals at time level $t_m$;
  \item[\texttt{c.}] evaluate $P(x_i,x_j), P(x_j,x_i)$ and $U_\epsi(x_i,x_j,t_m), U_\epsi(x_j,x_i,t_m)$;
  \item[\texttt{d.}] compute the post-interaction position $x_i^*$, $x_j^*$ for each pair $(i,j)$ using relations \eqref{eq:bin} and $\xi_i,\xi_j$ sampled from a normal distribution $\mathcal{N}(0,\sigma)$;
  \item[\texttt{e.}] set $x_i^{n+1}=x_i^{*}$, $x_j^{n+1}=x_j^{*}$.
  \end{enumerate}
  	\vspace{-0.2cm}
\item[]\texttt{end for}
  \end{enumerate}
  	\vspace{-0.2cm}
\end{algorithm}\medskip
\vspace{-0.2cm}

Where function $\textsc{Iround}(\cdot)$ denotes the integer stochastic rounding defined as
\[
\textsc{Iround}(x)=
\begin{cases}
[x]+1,& \zeta < x-[x],\\
[x],&\hbox{elsewhere}
\end{cases}
\]
with $\zeta$ a uniform $[0,1]$ random number and $[\cdot]$ the integer part.

\begin{remark}[Efficency]
In general, computing the interactions among a multi-agent system is a procedure of quadratic cost with respect to the number of agents, since every agent needs to evaluate its influence with every other. Note that with the proposed algorithm this cost becomes linear with respect to the number of samples introduced $O(N_s)$, since only binary interactions are accounted. A major difference compared to standard algorithms for Boltzmann equations is the way in which particles are sampled from $Q_{\varepsilon}^+(\mu^m,\mu^m)$ which does not require the introduction of a space grid \cite{BN}.
\end{remark}
\begin{remark}[Accuracy]
The  choice $\Delta t=\varepsilon$ is optimal if $\epsi$ is of the order of $O({N_s}^{-1/2})$. Indeed, the accuracy of the method will not increase for smaller values of $\Delta t$, because the numerical error is dominated by the fluctuations of the Monte Carlo method. For further details we refer to \cite{APb,PTa}.
\end{remark}

\subsection{Numerical approximation of the optimality conditions}
As shown in Section \ref{sec:firstorderopt}, the solution of the mean field optimal control problem \eqref{eq:MFOC}-\eqref{eq:Jfun} satisfies the optimality system
\begin{align}
\partial_t\mu &=- \nabla \cdot ((\bigF[\mu] + f)\mu) +\sigma\Delta \mu\,,\label{eq:forward}\\
- \partial_t\psi &=\frac{1}{2}|x-x_d|^2 +\gamma \CLambda(f) + \nabla \psi \cdot f + \sigma \Delta \psi\nonumber\\&- \frac12\int_\om \lt(P(x,y)\nabla \psi(x,t) - P(y,x)\nabla \psi(y,t)\rt)\cdot (y-x)\mu(y,t)\,dy\,,\label{eq:backward}\\
\nabla \CLambda (f) &= \frac1\gamma \nabla \psi\,,\quad \mu(x,0)=\mu_0(x)\,,\quad\psi(x,T)=0.\label{eq:conditions}
\end{align}
\paragraph{Forward equation.} 
In order to solve equation \eqref{eq:forward}, we  consider a first order forward scheme the time evolution and the Chang-Cooper scheme for the space discretization, \cite{CC}. 
The  formulation is based on the finite volume approximation of the density $\mu$ and $f$. Defining the operator $\mathcal{G}[\mu,f] := \F[\mu,f] +\sigma\nabla \mu$, with $\F[\mu,f] = \QF[\mu] + f$, then we can write in the one-dimensional domain $[-L,L]$ the (semi)-discretized equation \eqref{eq:forward} as
\begin{equation}\label{eq:GG}
\frac{d}{dt} \mu_i(t) = \frac{\mathcal{G}_{i+1/2}[\mu,f]-\mathcal{G}_{i-1/2}[\mu,f]}{\delta x},\quad  \textrm{ with } \quad \mu_i(t) =\frac{1}{\delta x}\int^{x+1/2}_{x-{1/2}} \mu(x,t) \ dx,
\end{equation}
where we have introduced the uniform grid $x_{i}=-L+i\delta x$, $i=0,\ldots,N,$ with $\delta x = 2L/N$, and denoted by $x_{i \pm 1/2}=x_i \pm \delta x/2$.  Thus, the operator $\mathcal{G}_{i+1/2}[\mu,f]$ in the case of constant diffusion $\sigma$ reads
\be
\begin{split}
\label{eq:flux}
\mathcal{G}_{i+1/2}[\mu,f]=&\left((1-\theta_{i+1/2})\mu_{i+1}+\theta_{i+1/2}\mu_i\right)\mathcal{F}[\mu_{i+1/2}, f_{i+1/2}] +\frac{\sigma (\mu_{i+1}-\mu_i)}{\delta x},
\end{split}
\ee  
where the weights $\theta_{i+1/2}$ are in general depending on the solution and the parameters of equation \eqref{eq:forward}. Hence the flux functions are defined as a combination of upwind and centered discretizations, and such that for $\sigma = 0$ the scheme reduces to an upwind scheme, i.e. $\theta_{i+1/2} = 0$. The choice of the weights is the key point of the scheme \eqref{eq:GG}, which allows to preserve steady state solutions and the non-negativity of the numerical density. We refer to \cite{APZd, BD10,CC} for the details on the properties and analysis of the Chang-Cooper scheme for similar Fokker-Planck models and to \cite{SAB16}, and references therein, for applications to control problems. 

Alternatively, scheme \eqref{eq:MCBoltz} furnishes a consistent method to solve the forward equation \eqref{eq:forward}, which we expect to be more efficient for problems with high dimensionality, since it relies on a stochastic evaluation of the nonlocal operator $\mathcal{P}[f]$.

\paragraph{Backward equation.} The main difficulty of the integro-differential advection-reaction-diffusion equation \eqref{eq:backward} resides on the efficient approximation of the integral term. We follow a finite difference approach, which we describe in the following. First, with time parameter $\delta t$ as in the forward problem, we consider the first-order  temporal approximation
\begin{align*}
- \frac{\psi^m-\psi^{m+1}}{\delta t} &=\frac{1}{2}|x-x_d|^2 +\gamma \CLambda(f^{m+1}) + \left(f^{m+1}- \frac12\int_\om P(x,y)\cdot (y-x)\mu^{m+1}\,dy\right)\cdot\nabla \psi^{m+1}\\&+ \sigma \Delta \psi^{m+1}+\frac12\int_\om \lt(P(y,x)\nabla_y \psi^{m+1}\rt)\cdot (y-x)\mu^{m+1}\,dy\,,\quad m=0,\ldots,M
\end{align*}
where $\psi^M=0$. At this level, $f$, $\mu$, and $\nabla\psi$ are treated as external data available at every discrete instance. In particular $\nabla_y$ (inside the integral) is reconstructed by numerical differentiation.  Then, the integral terms are evaluated with a Monte Carlo method generating $M_s$ samples according to the distribution $\mu$, and values of $\nabla_y\psi$ are obtained by interpolation of the reconstructed variable. The advection term is approximated with a space-dependent upwind scheme, and diffusion is approximated with centered differences.

\paragraph{Optimality condition and sweeping iteration.} Once the forward-backward system has been discretized, what remains is to establish a coupling procedure in order to find the solution of the optimality system matching both initial and terminal conditions. For this, a first possibility is to consider  the full space-time discretization of the forward-backward system, together with the optimality condition $\nabla \CLambda (f) = \frac1\gamma \nabla \psi$, and cast it as a large-scale set of nonlinear equations, which can be solved via a Newton method. This idea has been already successfully applied in the context of mean field games in \cite{ACCD12}. We pursue a different approach that has proven to be equally effective, developed in \cite{CS14}, where the authors apply a sweeping algorithm, which in our setting reads as follows.
\begin{algorithm}[H]
	\caption{\em Sweeping algorithm}\label{alg:sweeping}
		\begin{enumerate} 
              	 \item[\texttt{0.}] Given initial guess $f_0$, tolerance $tol$, and $i=0$
                 \vspace{-0.2cm}
            	  \item[\texttt{1.}] \texttt{while} $\|f_i-f_{i-1}\|\leq tol$
 		\vspace{-0.2cm}
                \begin{enumerate}
               \item[\texttt{a.}] Perform a forward solve \eqref{eq:forward} with data $f_i$ for $\mu_i$;
               \item[\texttt{b.}] Perform a backward solve \eqref{eq:backward} with data $f_i,\mu_i$, for $\psi_i$;
               \item[\texttt{c.}] Update through $\nabla \CLambda (f_{i+1}) = \frac1\gamma \nabla \psi_i$; 
               \item[\texttt{d.}] set $i=i+1$.           
\end{enumerate}
	\vspace{-0.2cm}
\texttt{end while}
\end{enumerate}
\vspace{-0.2cm}
\end{algorithm}  
Our numerical experience is consistent with what has been already reported in \cite{CS14}, in the sense that solutions satisfying the optimality system can be found after few sweeps. A more robust implementation can be obtained through a gradient-type method, as in \cite{BFMW14}.

\subsection{Numerical experiments}\label{sec:exp}
In order to validate our previous analysis we focus on  models for opinion dynamics, \cite{hekr02,SWS,PTa,T}, thus in the unidimensional case the state variable $x\in[-L,L]$ represents the agent opinion with respect to two opposite opinions $\{-L,+L\}$, and the control $f(x,t)$ can be interpreted as the strategy of a policy maker, \cite{AHP, APZa}.

Therefore we consider the following initial value problem
\begin{equation}\label{eq:MFC}
\partial_t \mu +\partial_x\left(\left(\int_{-L}^{+L}P(x,y)(y-x)\mu(y)dy +f\right)\mu\right)=\sigma\partial_x^2 \mu, \quad \mu(x,0) = \mu^0(x)
\end{equation}
with no-flux boundary conditions, and where $f$ denotes the control term, solution of 
\begin{equation}\label{eq:J_MFC}
f = \arg\min_{g\in\U}\dfrac{1}{2}\int_{0}^{T}\int_{-L}^{+L}\left(|x-x_d|^2 +\gamma g^2 \right)\mu(x,t)\ dx \ dt,
\end{equation}
where we consider a quadratic penalization of the control, i.e. $\Psi(c) = |c|^2/2$.

For different interaction kernels $P(\cdot,\cdot)$, we will study the performance of the proposed controllers $f=f(x,t)$, obtained through the following synthesis procedures: instantaneous control (IC), finite horizon (FH), and the sweeping algorithm (OC).
 
We report in Table \ref{tab:par}  the  choice of the algorithms and parameters, indicating for which method they have been used  to compute \eqref{eq:MFC}--\eqref{eq:J_MFC}.

\begin{table}[h]
\caption{Parameters choice for the various algorithms and optimization methods.}
\label{tab:all_parameters}
\begin{center}
\begin{tabular}{cccccc}
\hline
& $Algorithm$ & $N_s$ & $\epsi=\delta t$ & $\delta x$  &$tol$  \\
\hline
\hline
IC/FH & Alg \ref{ANMC}  & $5\times10^5$ & $2.5\times10^{-3}$ & $2.5\times 10^{-2}$   & $-$ \\
\hline
Uncontrolled/OC & Alg \ref{alg:sweeping}  & $-$ &$2.5\times10^{-3}$ & $2.5\times 10^{-2}$  &  $10^{-5}$   \\
\hline
\end{tabular}\label{tab:par}
\end{center}
\end{table}

\subsubsection{Test 1: Sznajd model}
We consider the Sznajd model, \cite{ANT,SWS} for which the interaction operator $P(\cdot,\cdot)$ in \eqref{eq:MFC} is defined as follows
\begin{align}\label{eq:MFSz}
P(x,y)  = \beta(1-x^2),
\end{align}
for $\beta$ a constant. Note that in this case the interaction kernel $P(\cdot,\cdot)$ models the propensity of voters to change their opinions within the domain $\Omega = [-1,1]$, and for values close to the extremal opinions $\{-1,1\}$ the influence is low, conversely for opinions close to zero the influence is high. The dynamics is such that for  $\beta>0$ \emph{concentration} of the density profile appears, whereas for $\beta<0$ \emph{separation} occurs, namely concentration around $x=1$ and $x=-1$, see \cite{ANT}.  

For our first test we fix $\beta=-1$ and we define in the time interval $[0,T]$, $T= 8$. We solve the control problem \eqref{eq:MFC}--\eqref{eq:J_MFC},  with a bivariate initial data
$\mu^0(x) := \varrho_+(x+0.75;0.05,0.5)+\varrho_+(x-0.5;0.15,1),$
where  $\varrho_+(y;a,b) := \max\{(y/b)^2-a,0\}$, with diffusion coefficient $\sigma = 0.01$, and desired state $x_d = -0.5$.

In Figure \ref{Fig_T1} we depict the final state of \eqref{eq:MFSz} at time $T = 8$ for the uncontrolled and controlled dynamics. The simulations show the concentration of the profiles around the reference position $x_d$ in presence of the control, instead in the uncontrolled case the density tends to concentrate around the boundary. The left-hand side figure refers to a penalization of the control $\gamma = 0.5$, the right-hand side figure with $\gamma = 0.05$. As expected, with smaller control penalizations, the final state is driven closer to the desired reference.

\begin{figure}[t]
\centering
\includegraphics[scale=0.35]{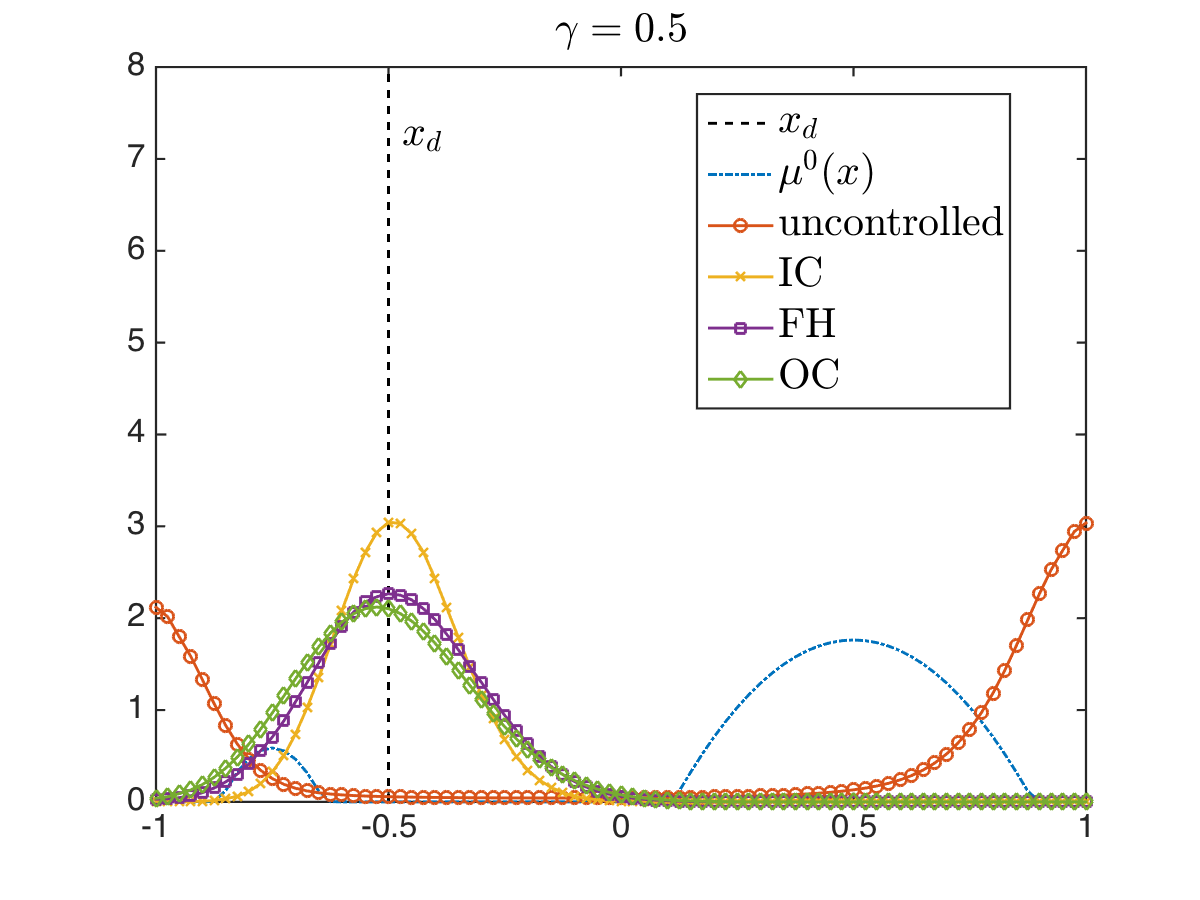}
\includegraphics[scale=0.35]{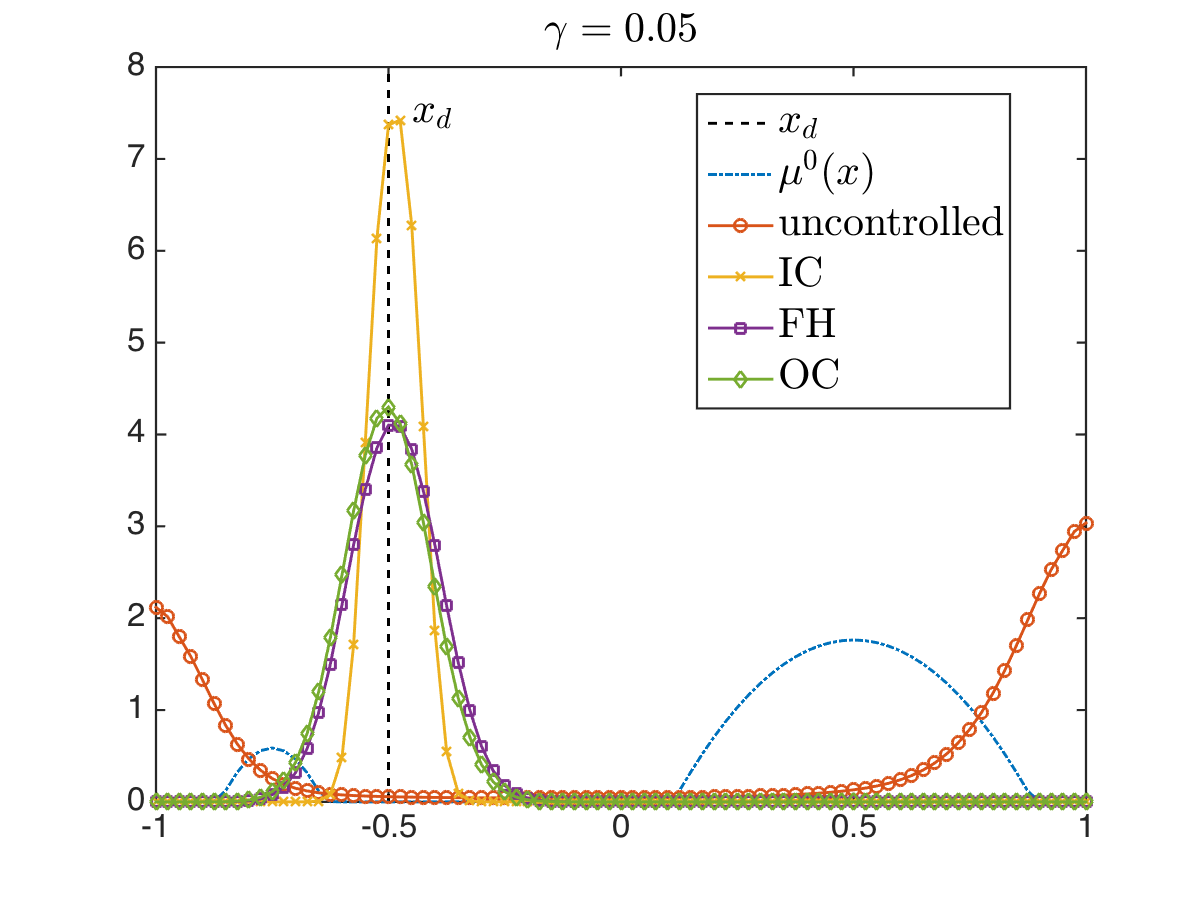}
\caption{Test \#1: Final states at time $T=8$ of the Sznajd model \eqref{eq:MFSz} for $\beta = -1$ with initial data $\mu^0(x)$. Concentration around the desired state $x_d$ is observed in presence of the controls: instantaneous control (IC), finite horizon approach (FH), optimal control (OC), separation is observed in the uncontrolled setting. Left figure $\gamma = 0.5$, right figure $\gamma = 0.05$. }
\label{Fig_T1}
\end{figure}

In Figure \ref{Fig_T1b} we depict the transient behavior of the density $\mu(x,t)$ and the control $f(x,t)$ in the $[-1,+1]\times[0,T]$ frame, respectively for $\gamma =0.5$ and $\gamma = 0.05$, and we report the values of the cost function $J(\mu,f)$ corresponding to the different methods. 
Note that that the action of the instantaneous control is almost constant in time steering the system toward $x_d$ but with the higher cost $J(\mu,f)$, on the other hand the optimal finite horizon for the binary dynamics (FH) produces a similar control with respect to the optimal control obtained by the sweeping algorithm (OC), with a small difference between the values of the cost functional.

\begin{figure}
\begin{center}
\begin{tabular}{@{}c@{\hspace{1mm}}c@{\hspace{1mm}}c@{\hspace{1mm}}c@{}}
~&&\textrm{uncontrolled}&\\
~&&\includegraphics[trim=30 10 40 20,clip,width=0.25\textwidth]{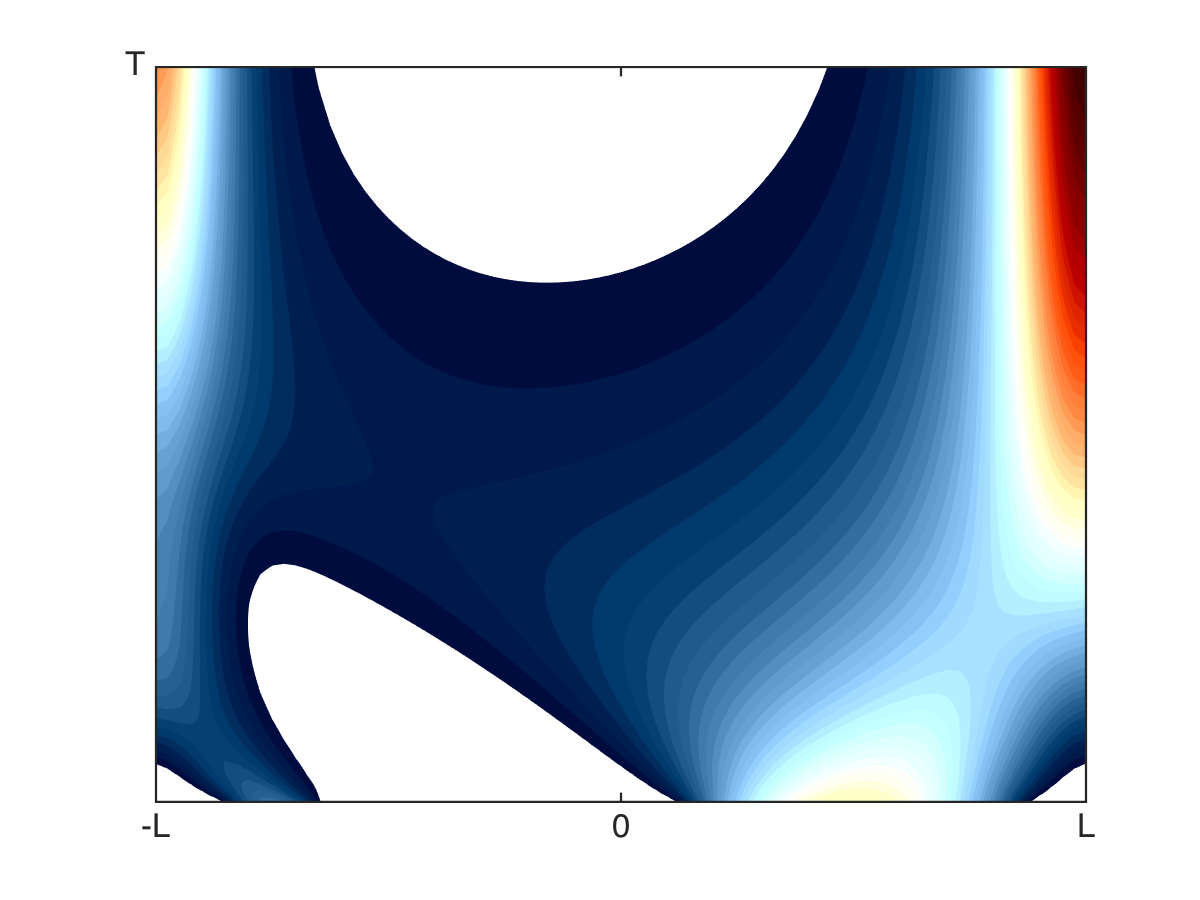}&\\
\hline
&$IC$ & $FH$ & $OC$\\
\hline
\hline
 $\gamma=0.5$&$J(\mu,f)=0.9982 $  & $J(\mu,f)=0.9467$ & $J(\mu,f)=0.9219$ \\
 \hline
\sidecap{$\mu(x,t)$} 
&
\includegraphics[trim=30 10 40 20,clip,width=0.25\textwidth]{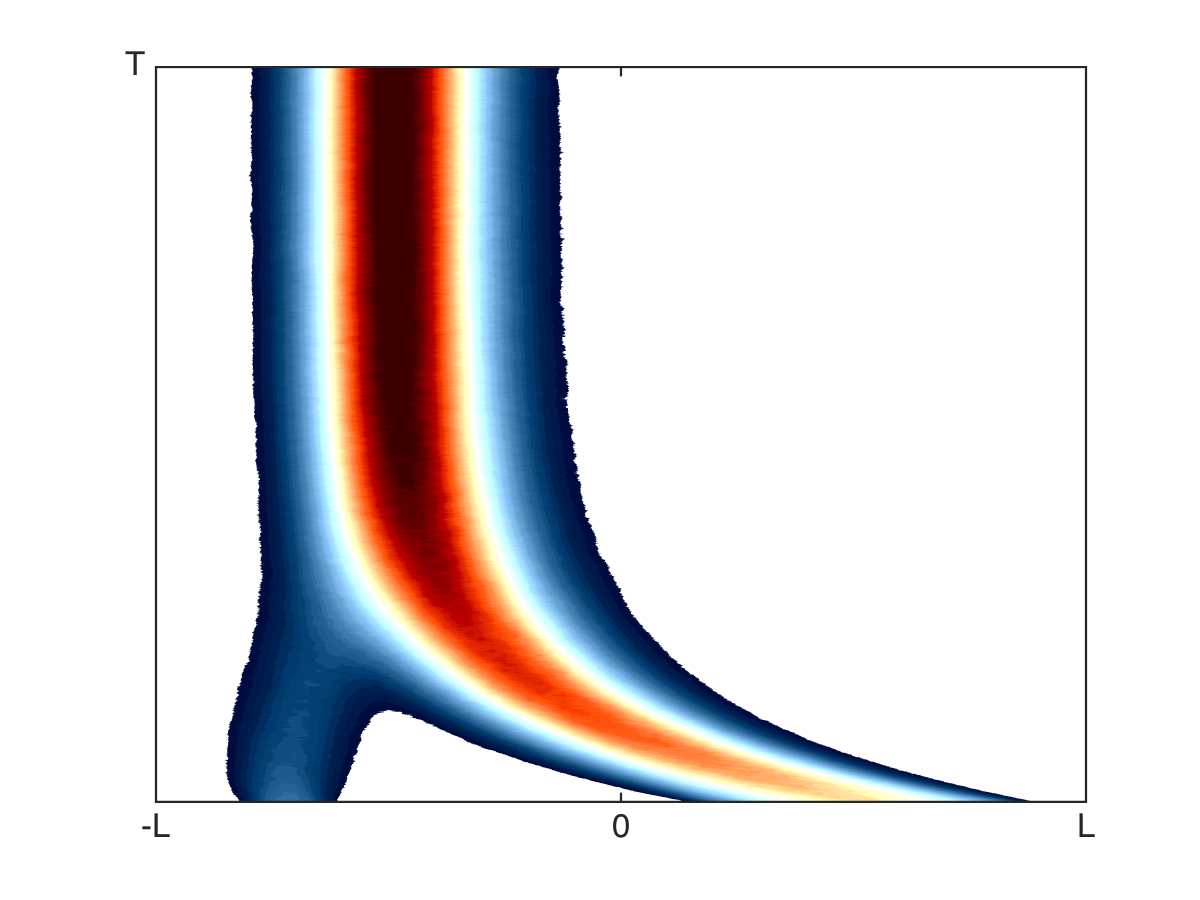}&
\includegraphics[trim=30 10 40 20,clip,width=0.25\textwidth]{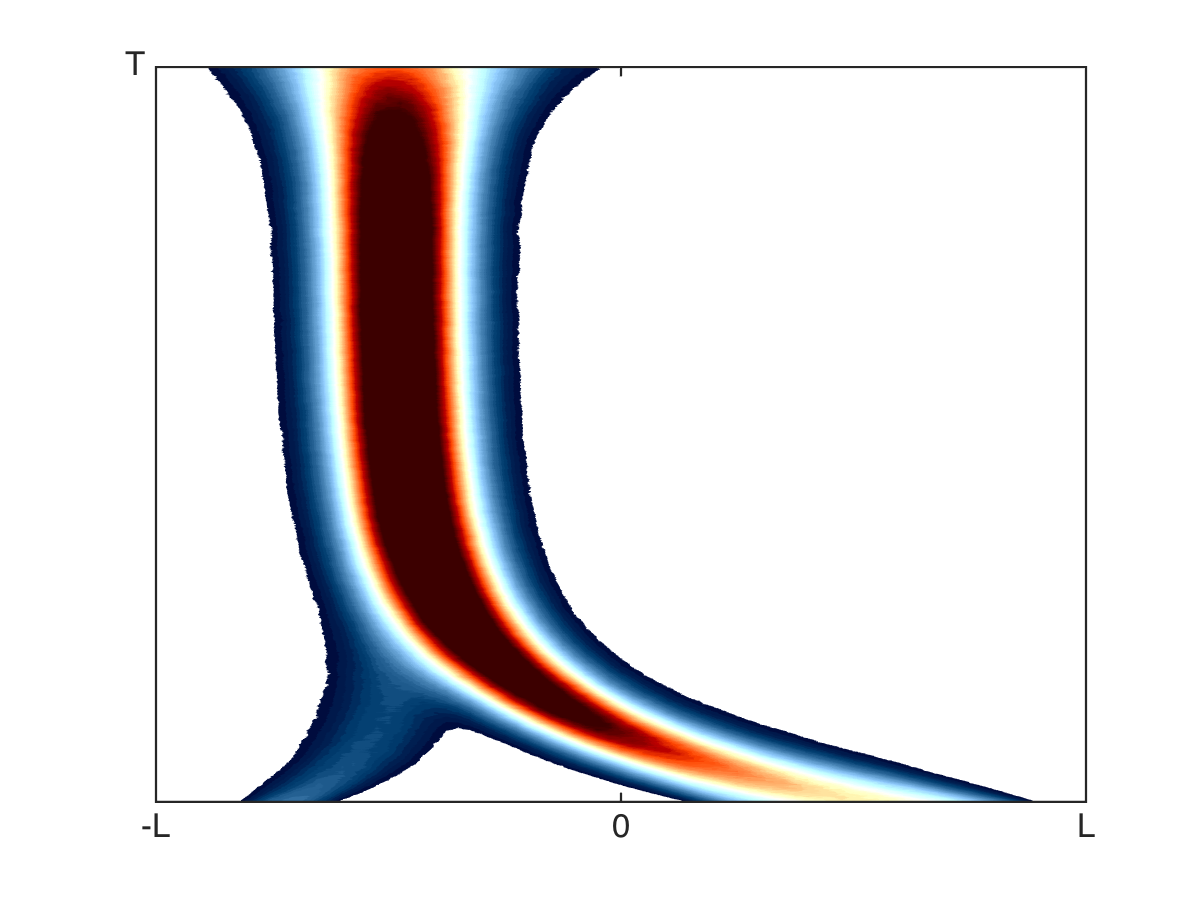}&
\includegraphics[trim=30 10 40 20,clip,width=0.25\textwidth]{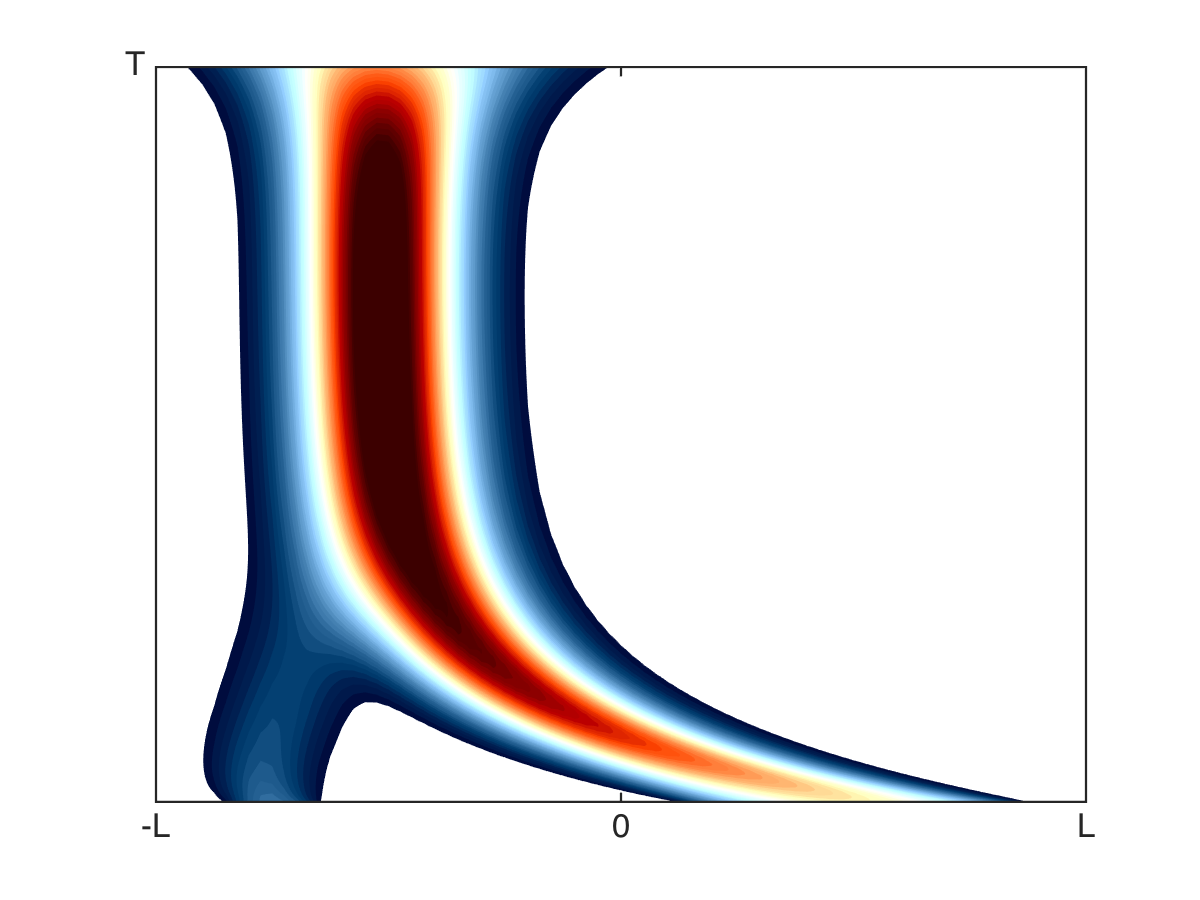}
\\
\sidecap{$f(x,t)$}
&
\includegraphics[trim=30 10 40 20,clip,width=0.25\textwidth]{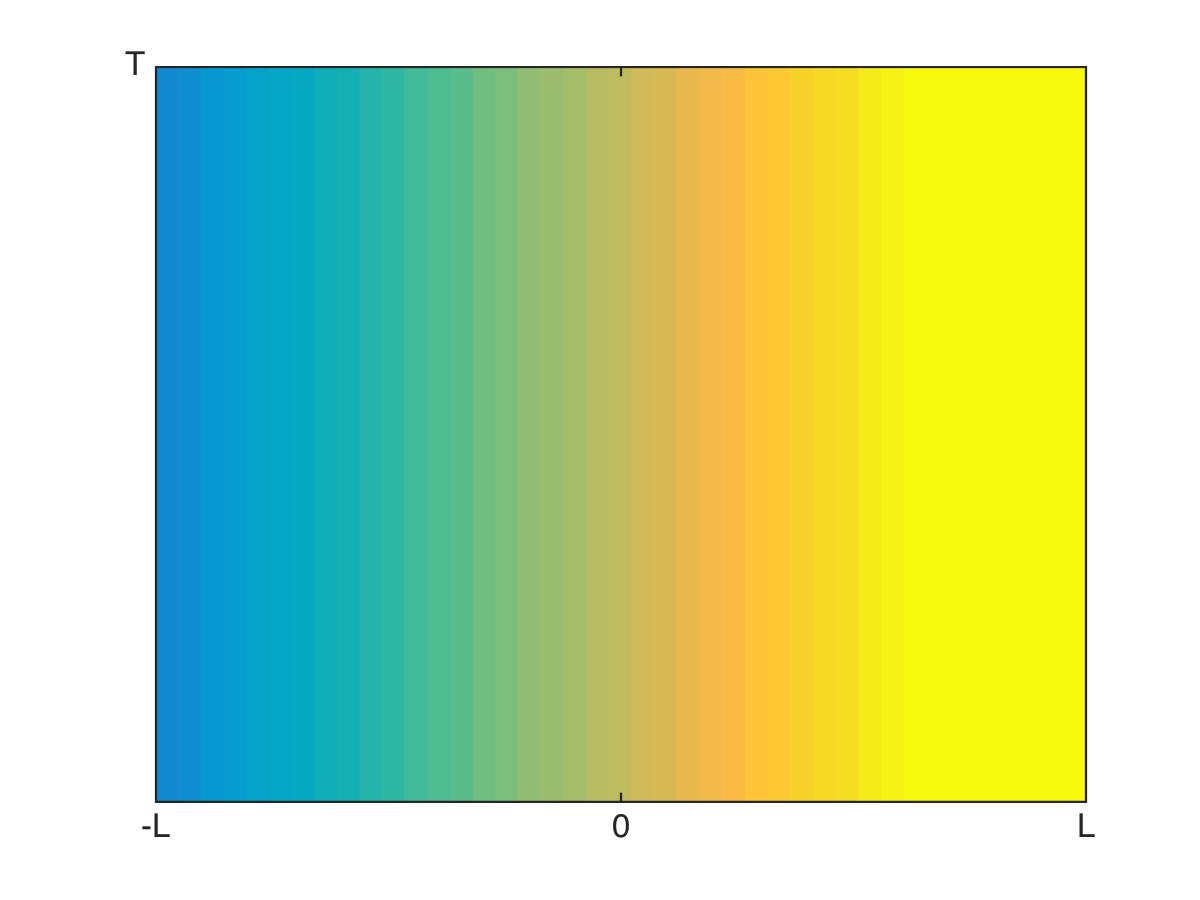}&
\includegraphics[trim=30 10 40 20,clip,width=0.25\textwidth]{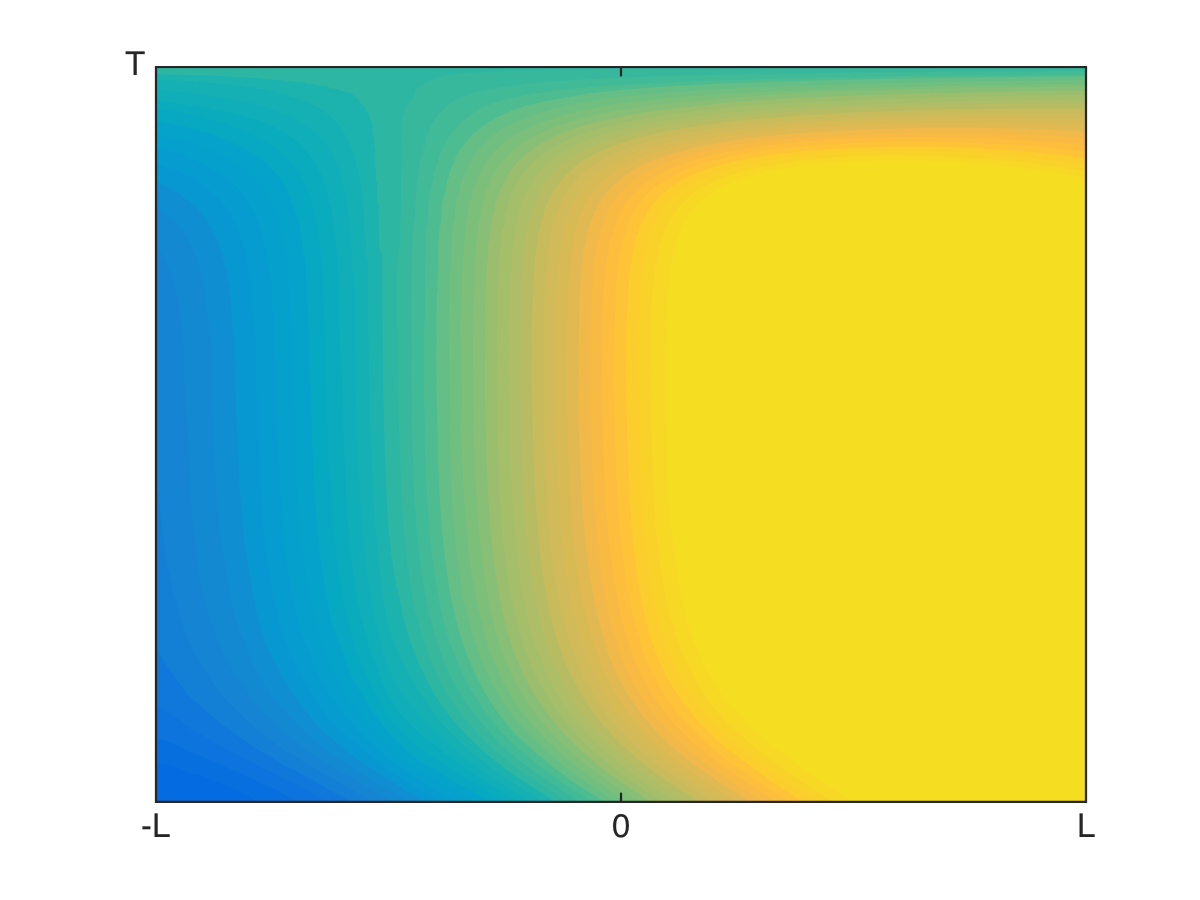}&
\includegraphics[trim=30 10 40 20,clip,width=0.25\textwidth]{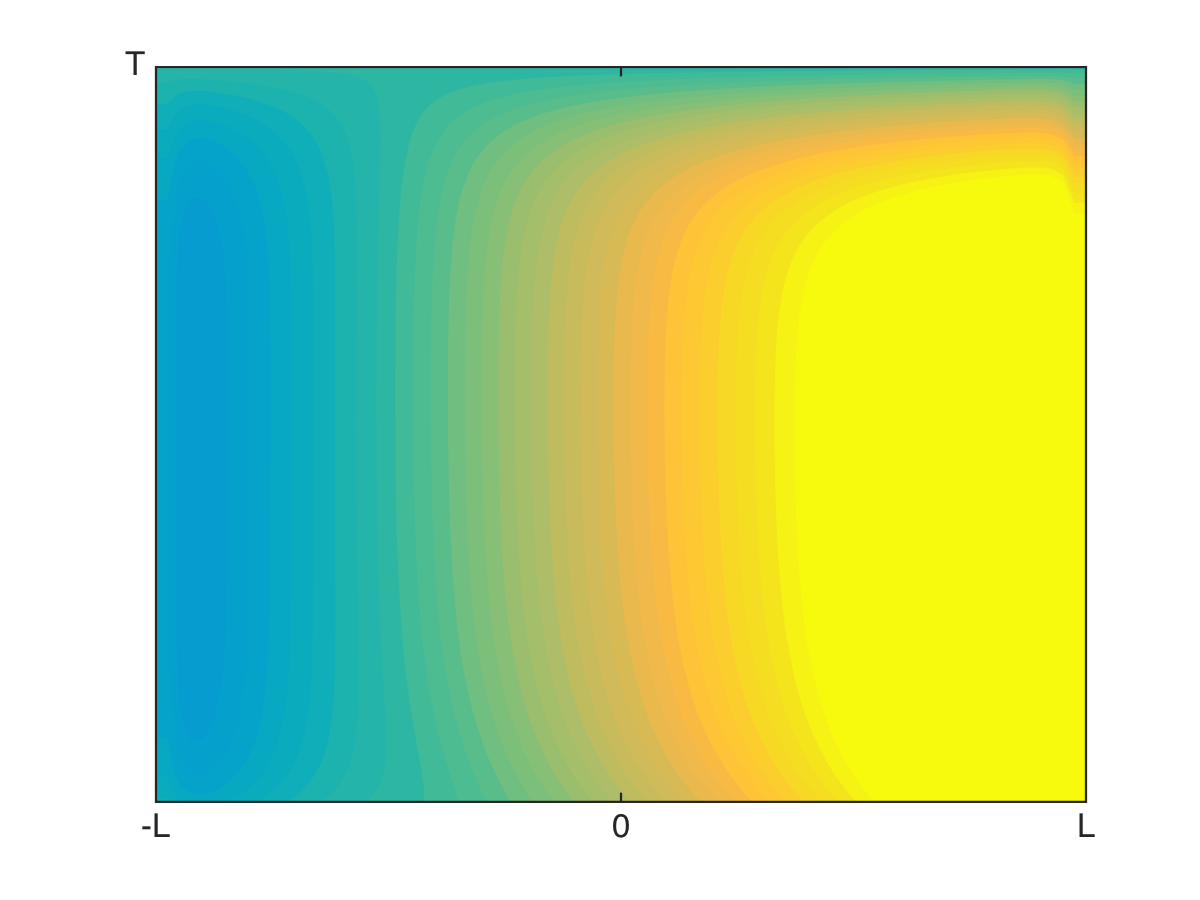}
\\
\hline
\hline
 $\gamma=0.05$&$J(\mu,f)=0.3648  $  & $J(\mu,f)=0.2835$ & $J(\mu,f)=0.2707$ \\
 \hline
\sidecap{$\mu(x,t)$} 
&
\includegraphics[trim=30 10 40 20,clip,width=0.25\textwidth]{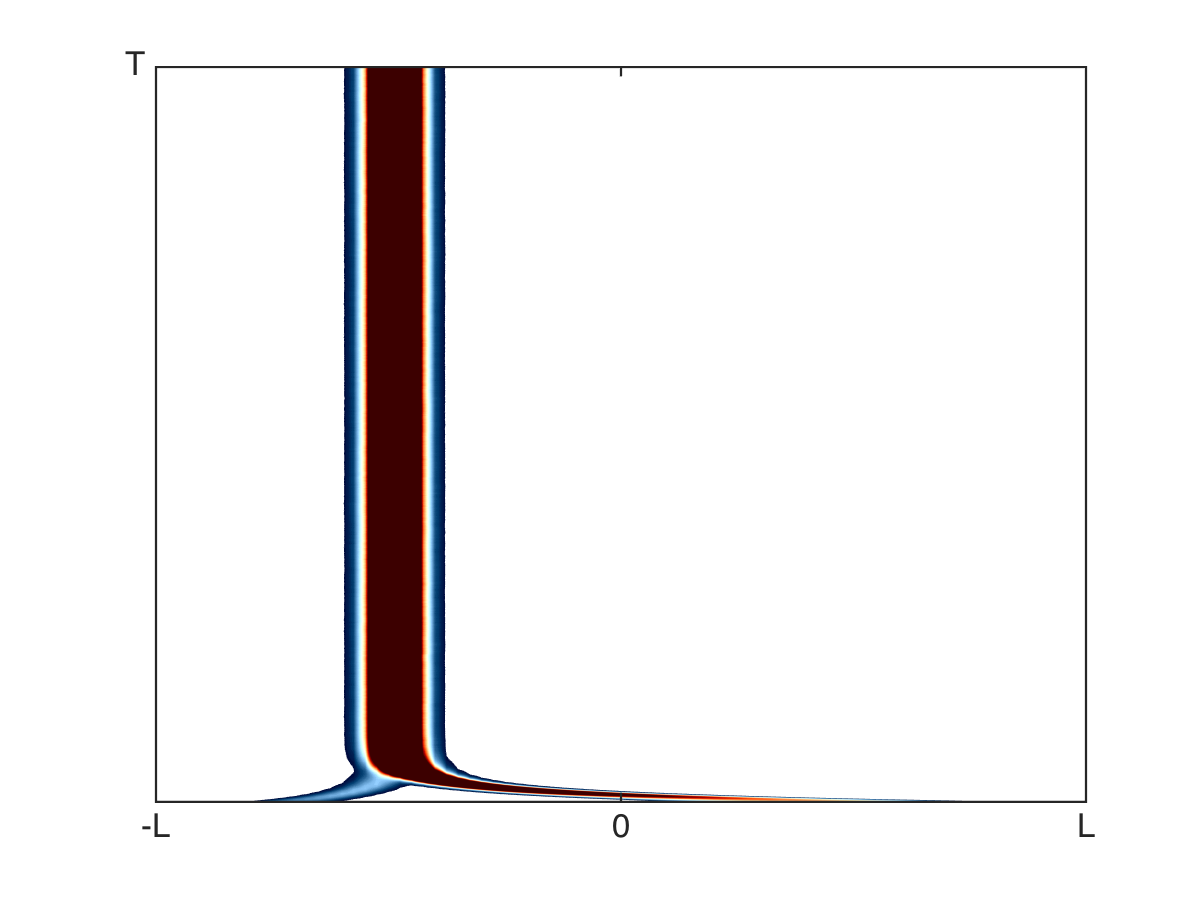}&
\includegraphics[trim=30 10 40 20,clip,width=0.25\textwidth]{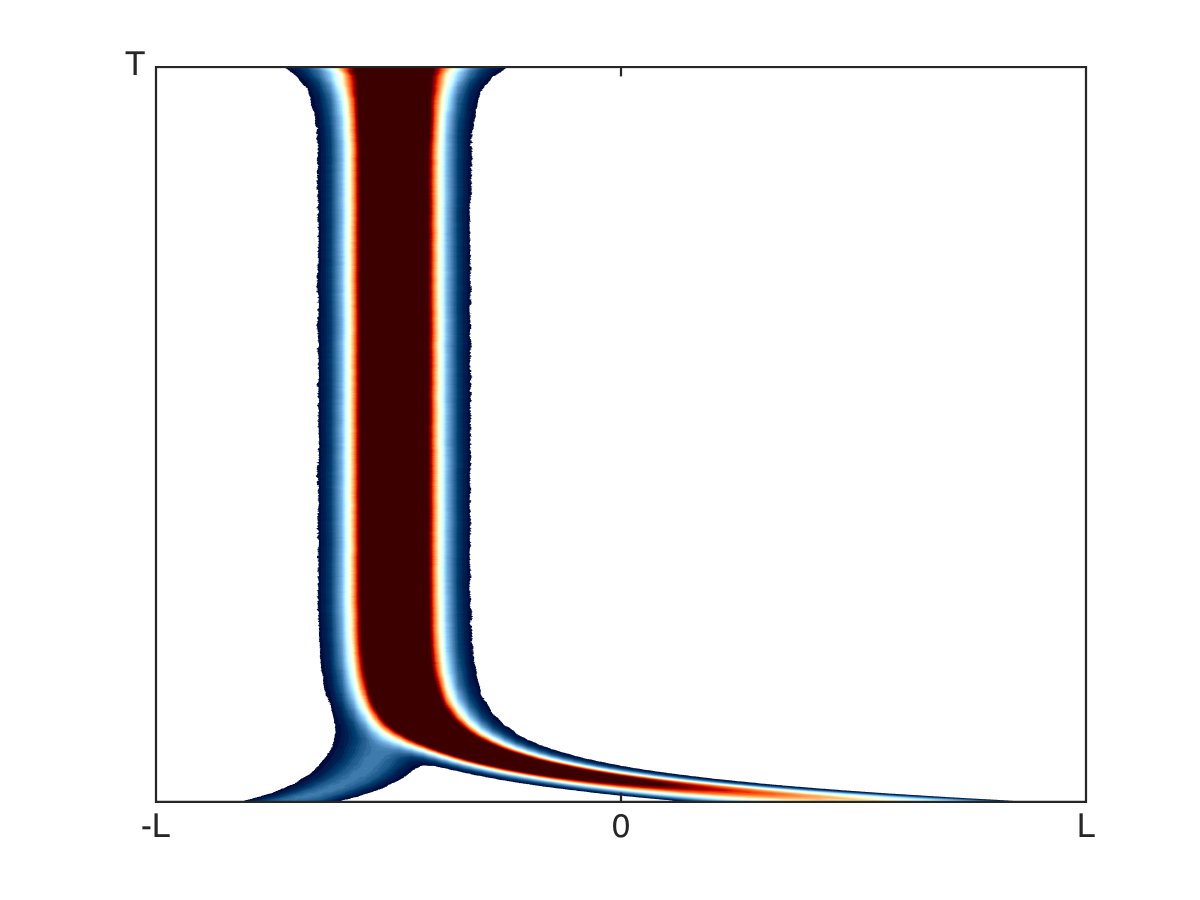}&
\includegraphics[trim=30 10 40 20,clip,width=0.25\textwidth]{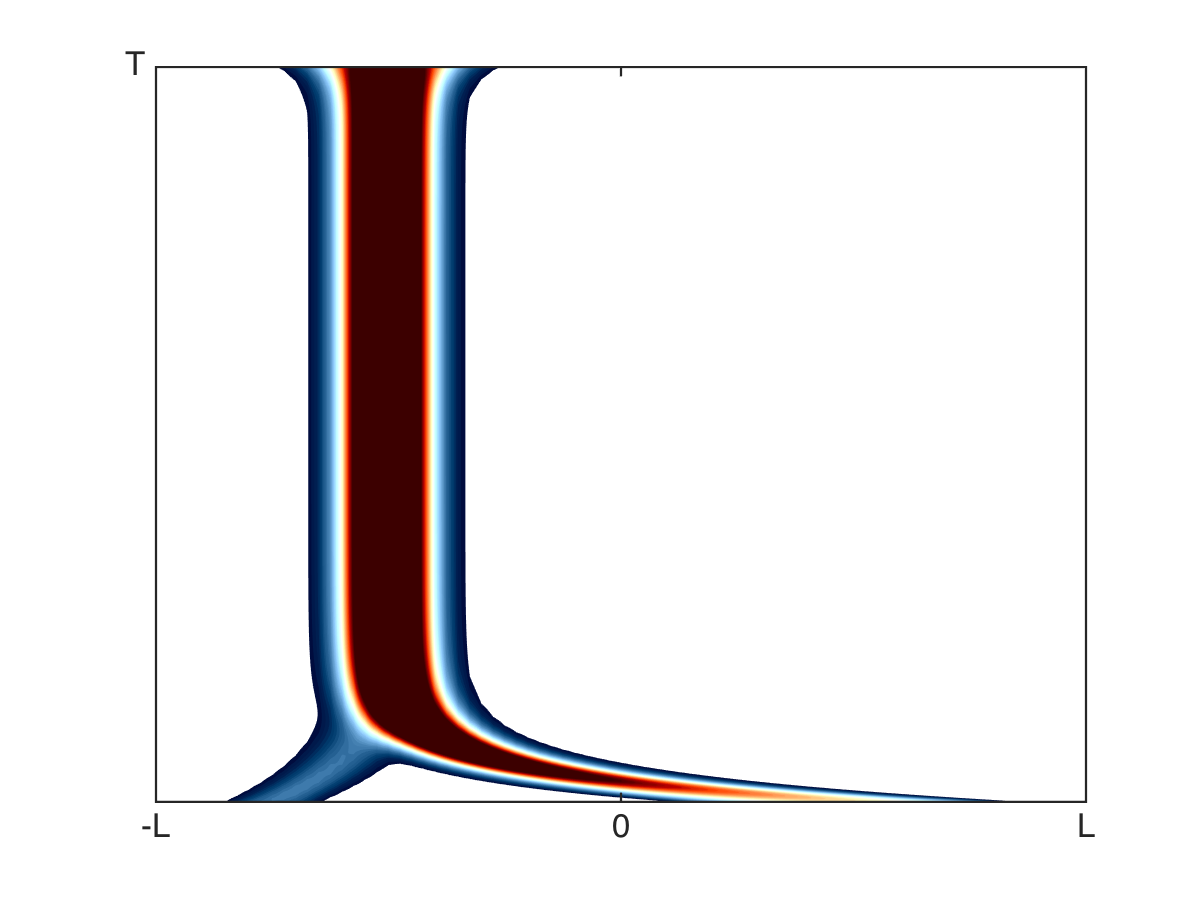}
\\
\sidecap{$f(x,t)$}
&
\includegraphics[trim=30 10 40 20,clip,width=0.25\textwidth]{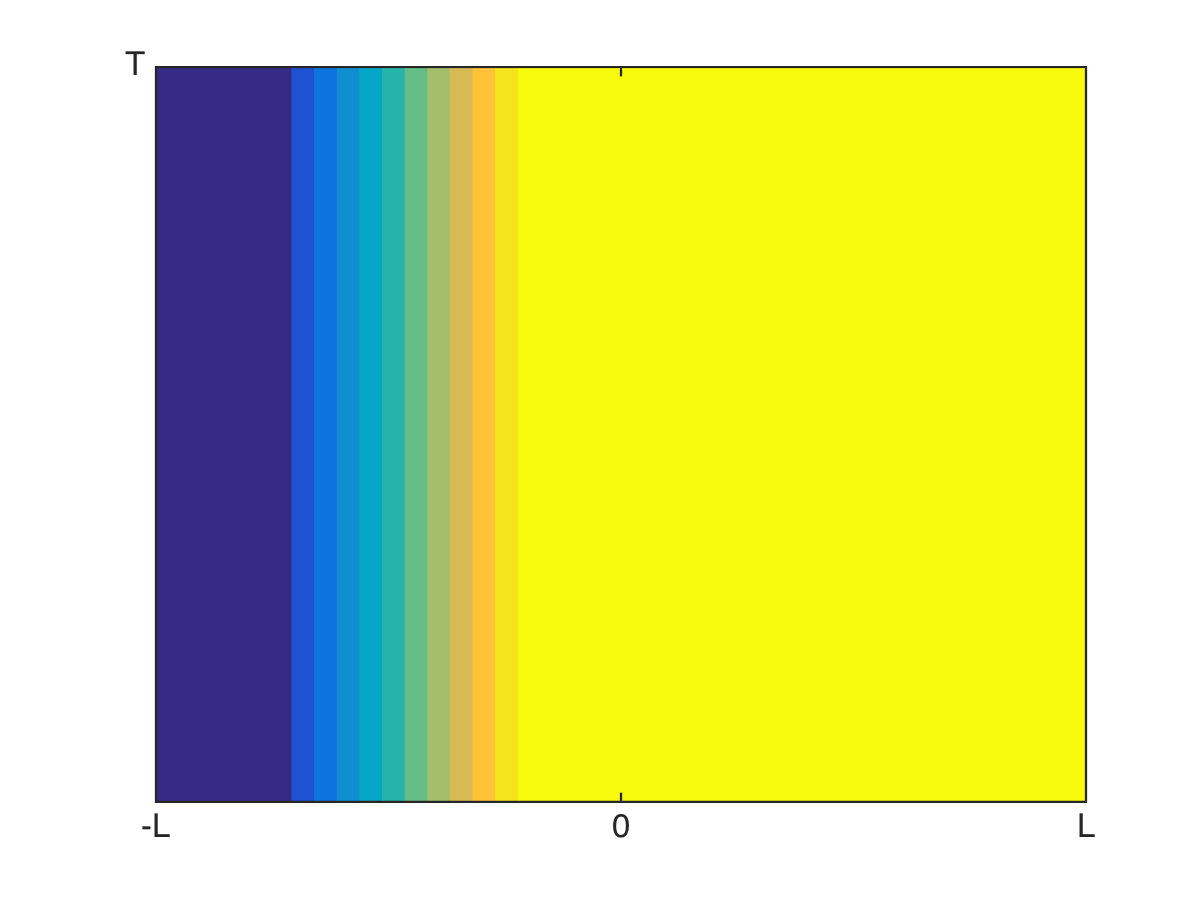}&
\includegraphics[trim=30 10 40 20,clip,width=0.25\textwidth]{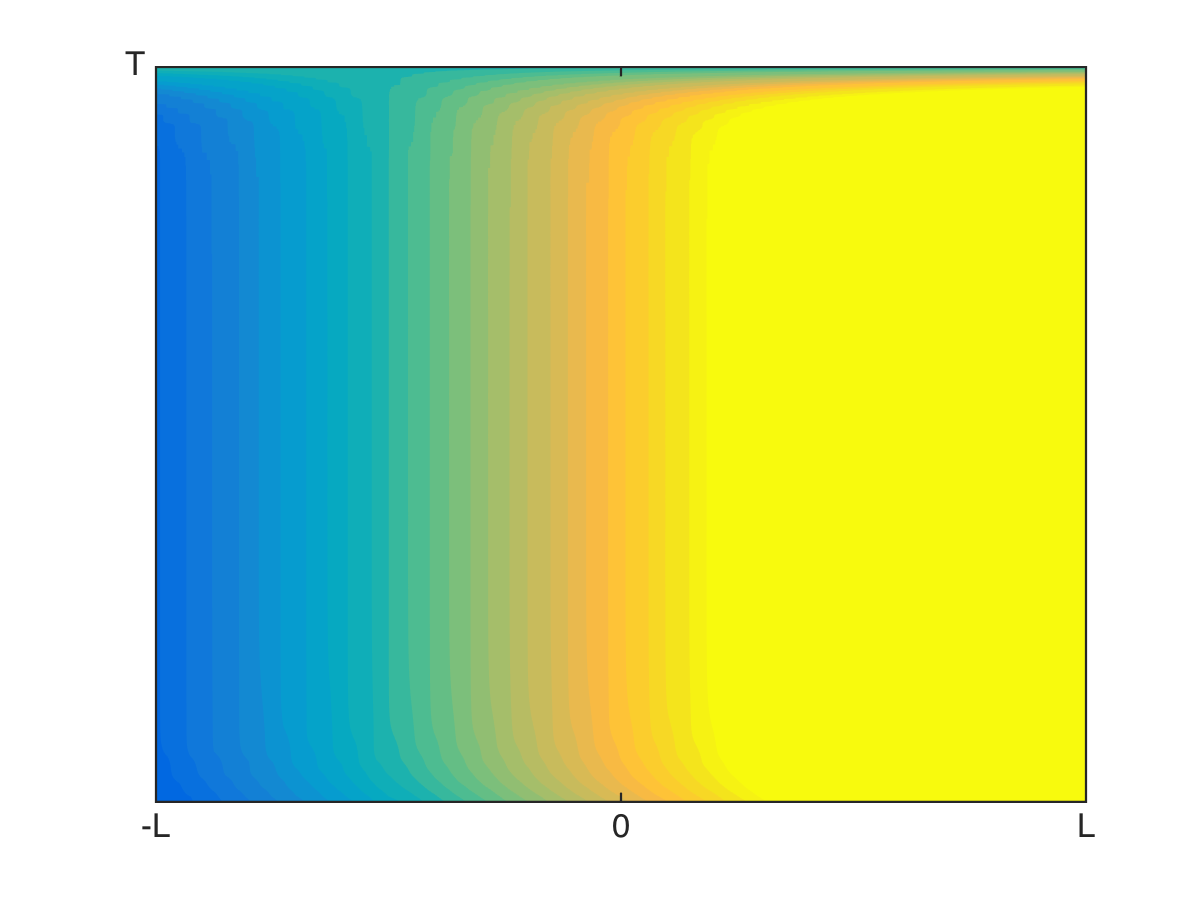}&
\includegraphics[trim=30 10 40 20,clip,width=0.25\textwidth]{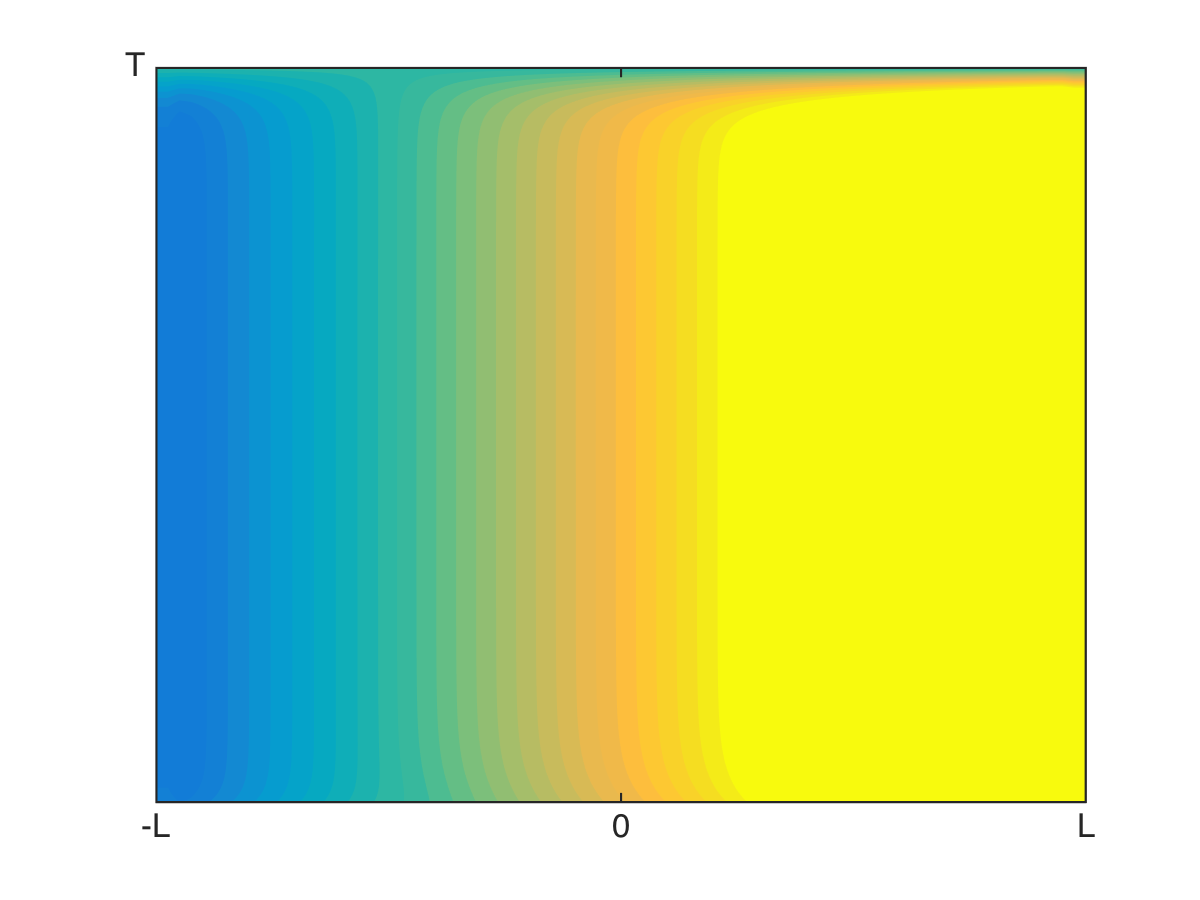}
\\
\hline
\end{tabular}
\end{center}
\caption{Test \#1: Transient behavior of the density $\mu(x,t)$ and the control $f(x,t)$ in $[-L,+L]\times[0,T]$, with $L =1,~T=8$,  for the Sdnajz's model, \eqref{eq:MFC}-\eqref{eq:MFSz}. The top picture depicts the transient density of the unconstrained dynamics. Value of the cost functional are reported in correspondence of the choice of the method and the penalization parameter $\gamma $.}\label{Fig_T1b}
\end{figure}

\subsubsection{Test 2: Hegselmann-Krause model}
In this second test we consider the mean field Hegselmann-Krause model \cite{hekr02}, also known as bounded confidence model, whose interaction kernel reads
\begin{align}\label{eq:MFHK}
P(x,y)=\chi_{\{|x-y|\leq\kappa\}}(y).
 \end{align}
This type of model describes the propensity of agents  to interact only within a confidence range $K=[x-\kappa,x+\kappa]$ of their opinion $x$, in the present experiment we fix $\kappa = 0.15$.
Thus we study the evolution of the control problem \eqref{eq:MFC}--\eqref{eq:J_MFC} up to time $T = 20$ with initial data defined as
$\mu^0(x) = C_0(0.5+\epsilon(1-x^2)),$ for $\epsilon = 0.01$ and $C_0$ such that the total density is a probability distribution. 
The diffusion coefficient is $\sigma = 10^{-5}$, the penalization parameter $\gamma = 2.5$, and  the desired state $x_d = 0$.

The uncontrolled evolution of this model shows the emergence of multiple clusters, as it is shown in the top picture of Figure \ref{Fig_T2}, due to the small value of $\kappa$ and small diffusion. Figure \ref{Fig_T2} depicts the transient behavior of the density $\mu(x,t)$ and the control signal $f(x,t)$ in the frame $\Omega\times[0,T]$. 

We observe in Figure \ref{Fig_T2} that for the instantaneous control (IC), consensus is slowly reached with a cost functional value of $J_{IC}(\mu,f)=0.8807$; the finite horizon control (FH) and the solution of the optimality conditions (OC) are able to steer faster the system towards $x_d$, respectively with cost $J_{FH}(\mu,f)= 0.6079$, and $J_{OC}(\mu,f)= 0.5570$.
\\

\begin{figure}[!ht]
\begin{center}
\begin{tabular}{@{}c@{\hspace{1mm}}c@{\hspace{1mm}}c@{\hspace{1mm}}c@{}}
~&&\textrm{uncontrolled}&\\
~&&\includegraphics[trim=30 10 40 20,clip,width=0.25\textwidth]{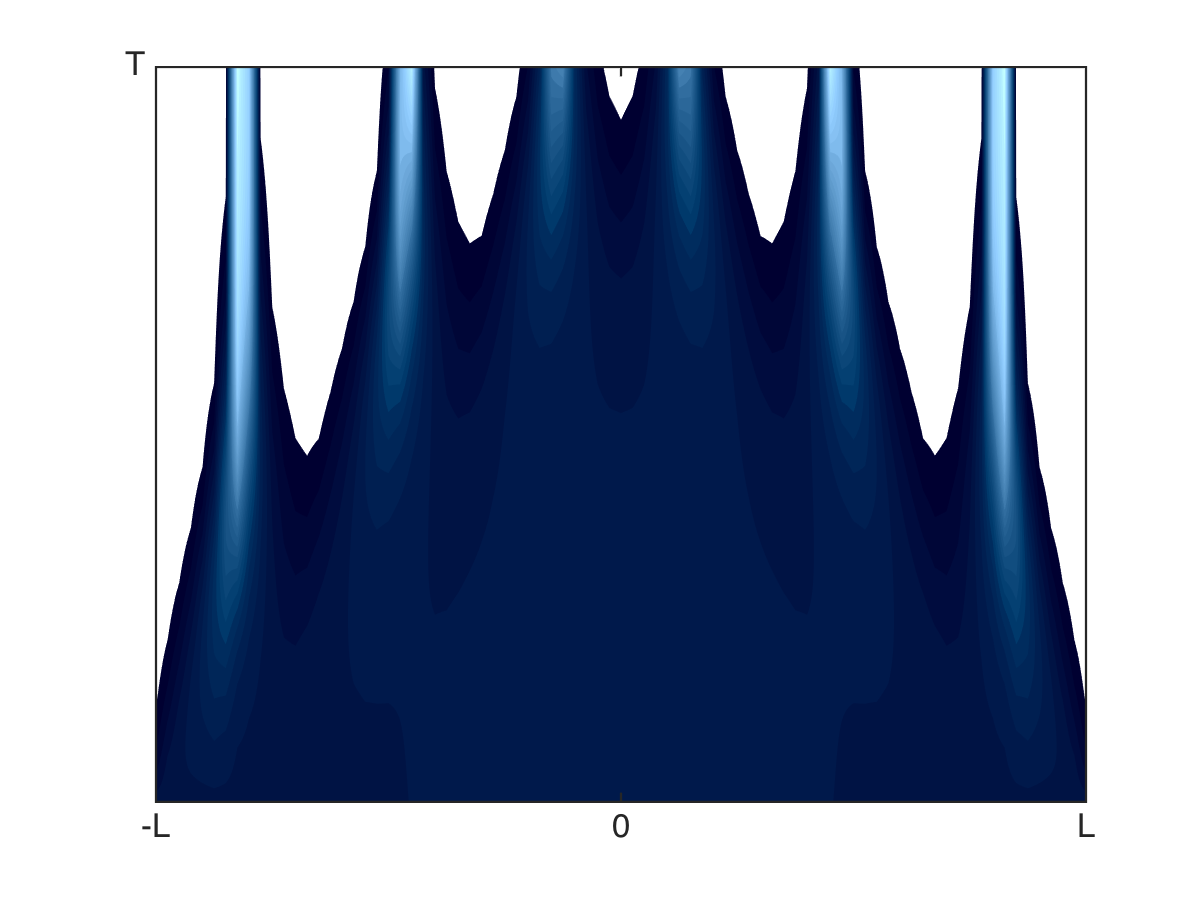}&\\
\hline
&$IC$ & $FH$ & $OC$\\
\hline
\hline
 $\gamma=2.5$&$J(\mu,f)=0.8807 $  & $J(\mu,f)=0.6079$ & $J(\mu,f)=0.5570$ \\
 \hline
\sidecap{$\mu(x,t)$} 
&
\includegraphics[trim=30 10 40 20,clip,width=0.25\textwidth]{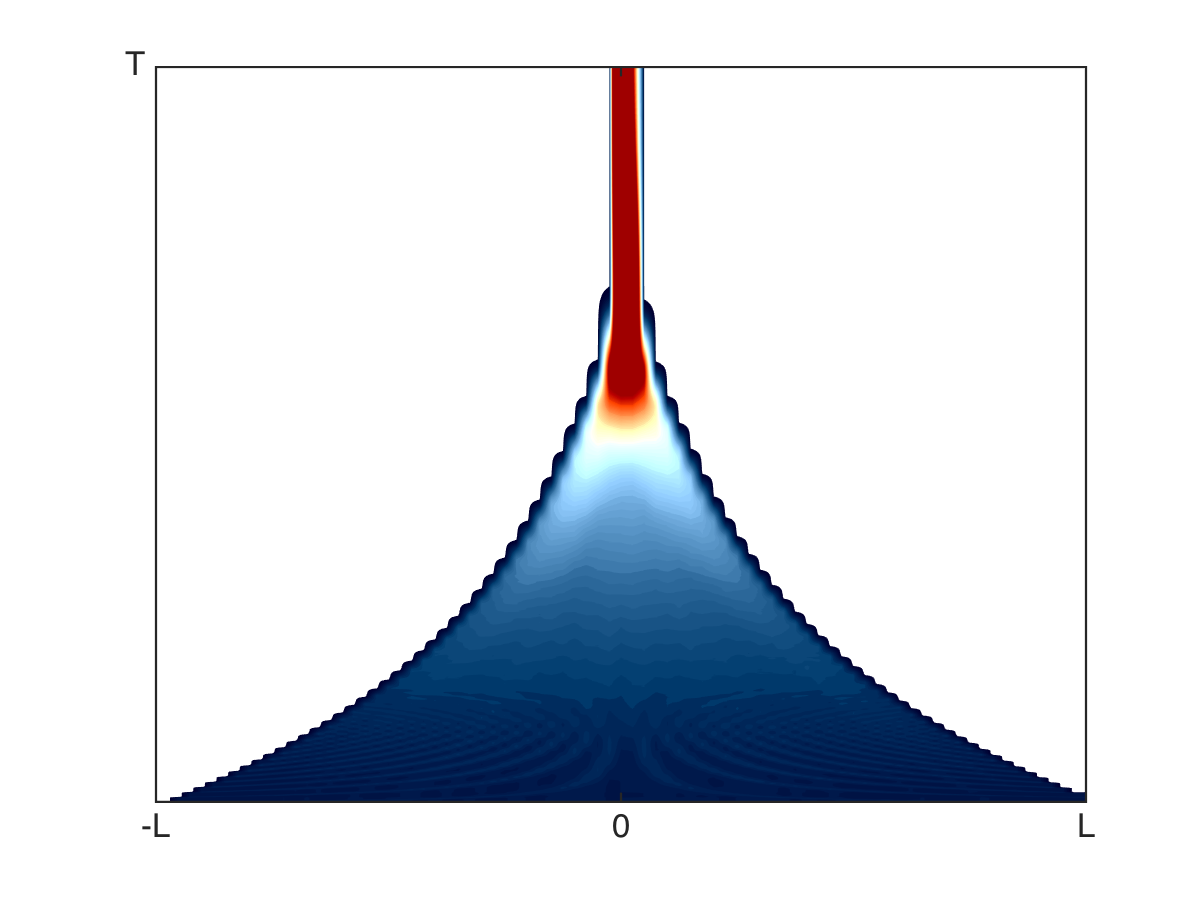}&
\includegraphics[trim=30 10 40 20,clip,width=0.25\textwidth]{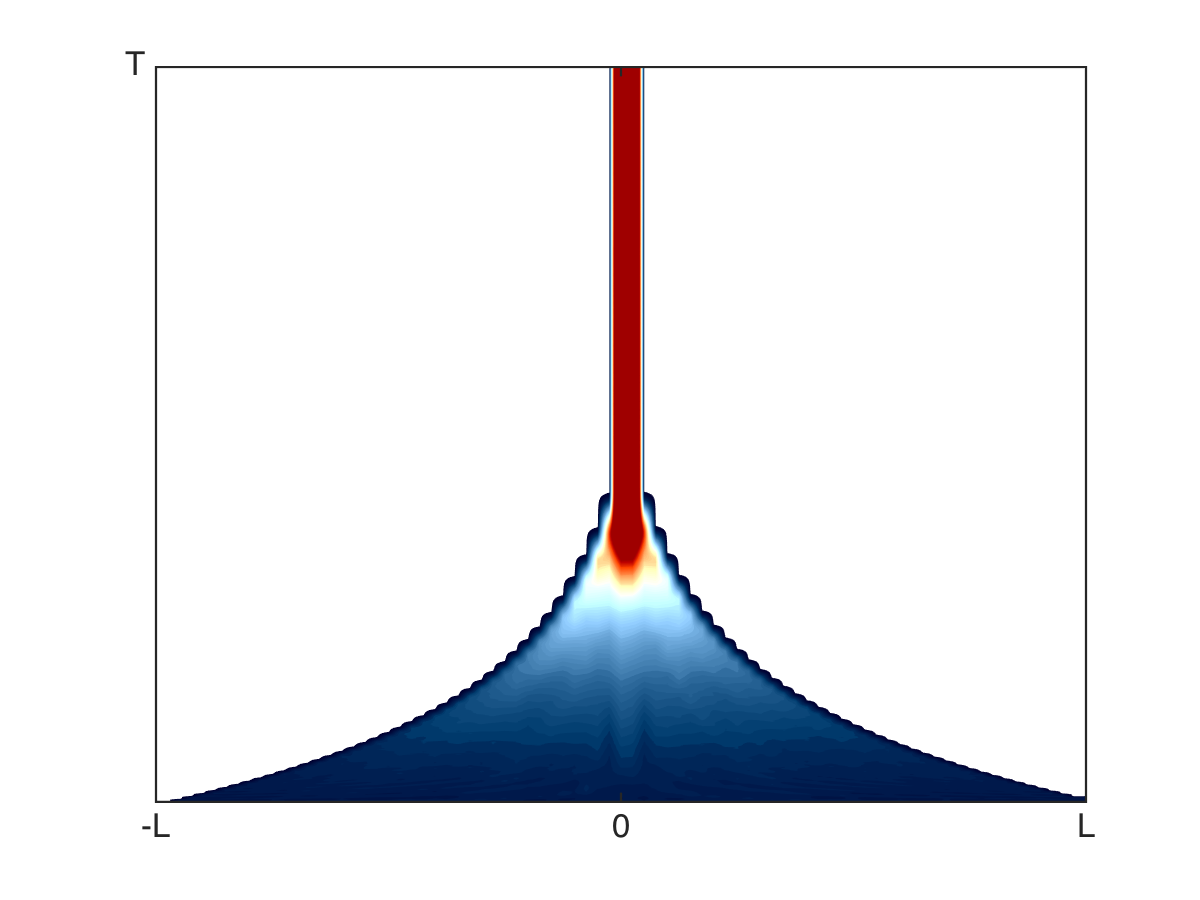}&
\includegraphics[trim=30 10 40 20,clip,width=0.25\textwidth]{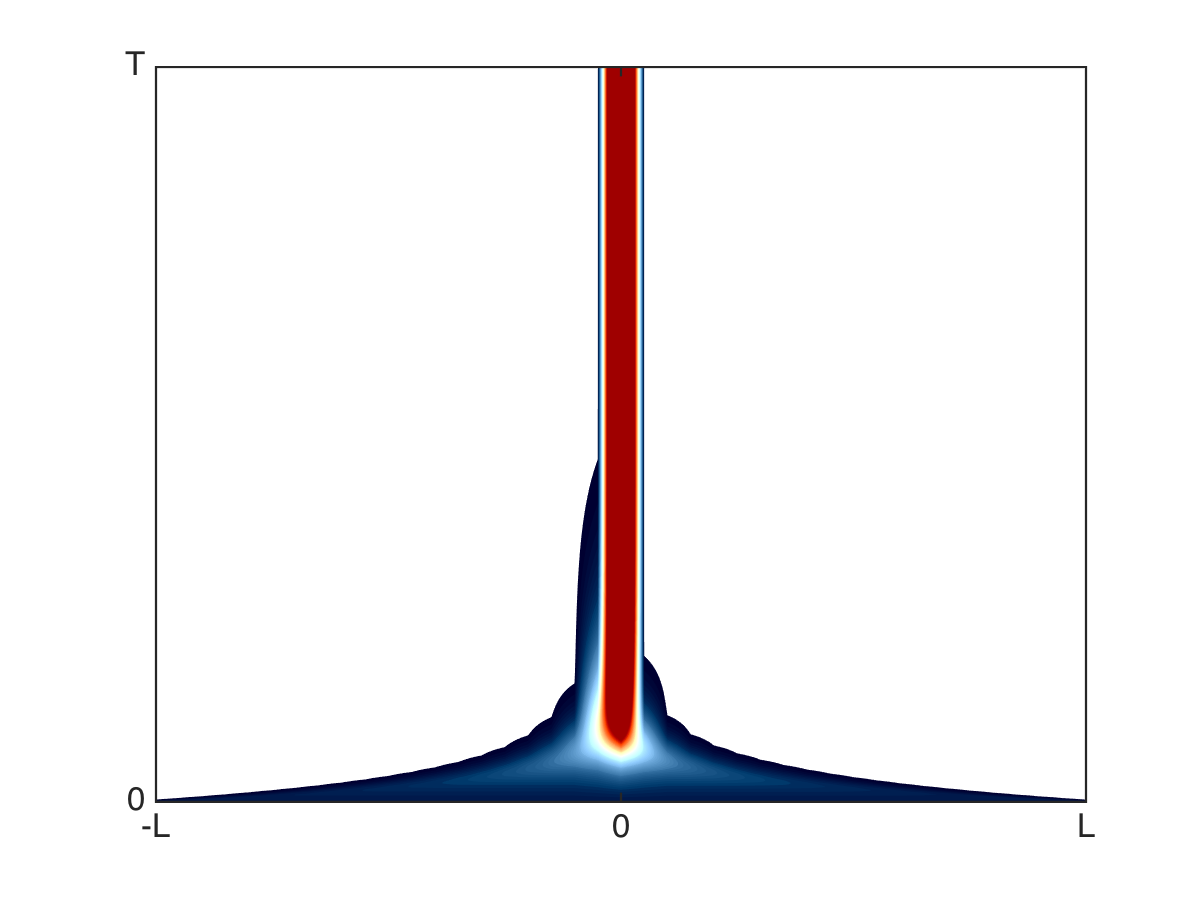}
\\
\sidecap{$f(x,t)$}
&
\includegraphics[trim=30 10 40 20,clip,width=0.25\textwidth]{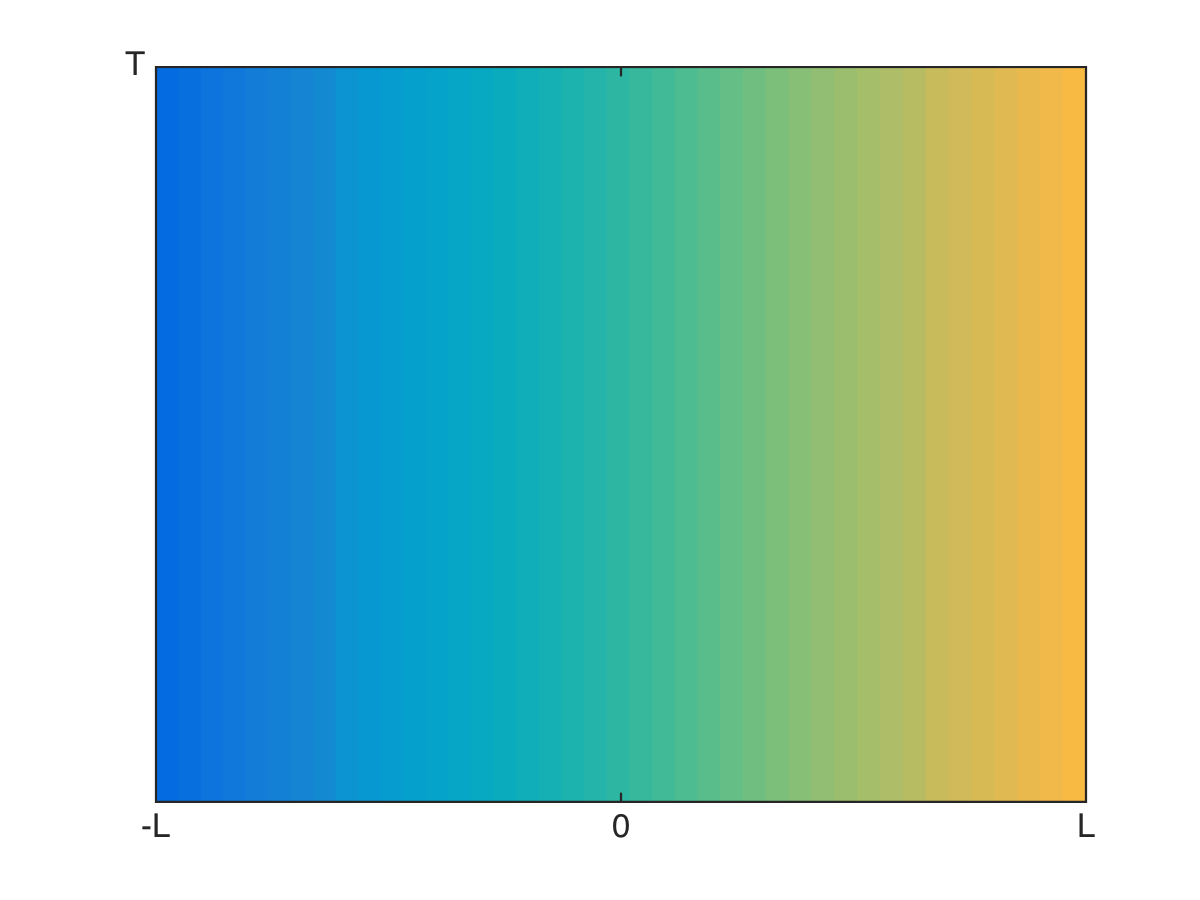}&
\includegraphics[trim=30 10 40 20,clip,width=0.25\textwidth]{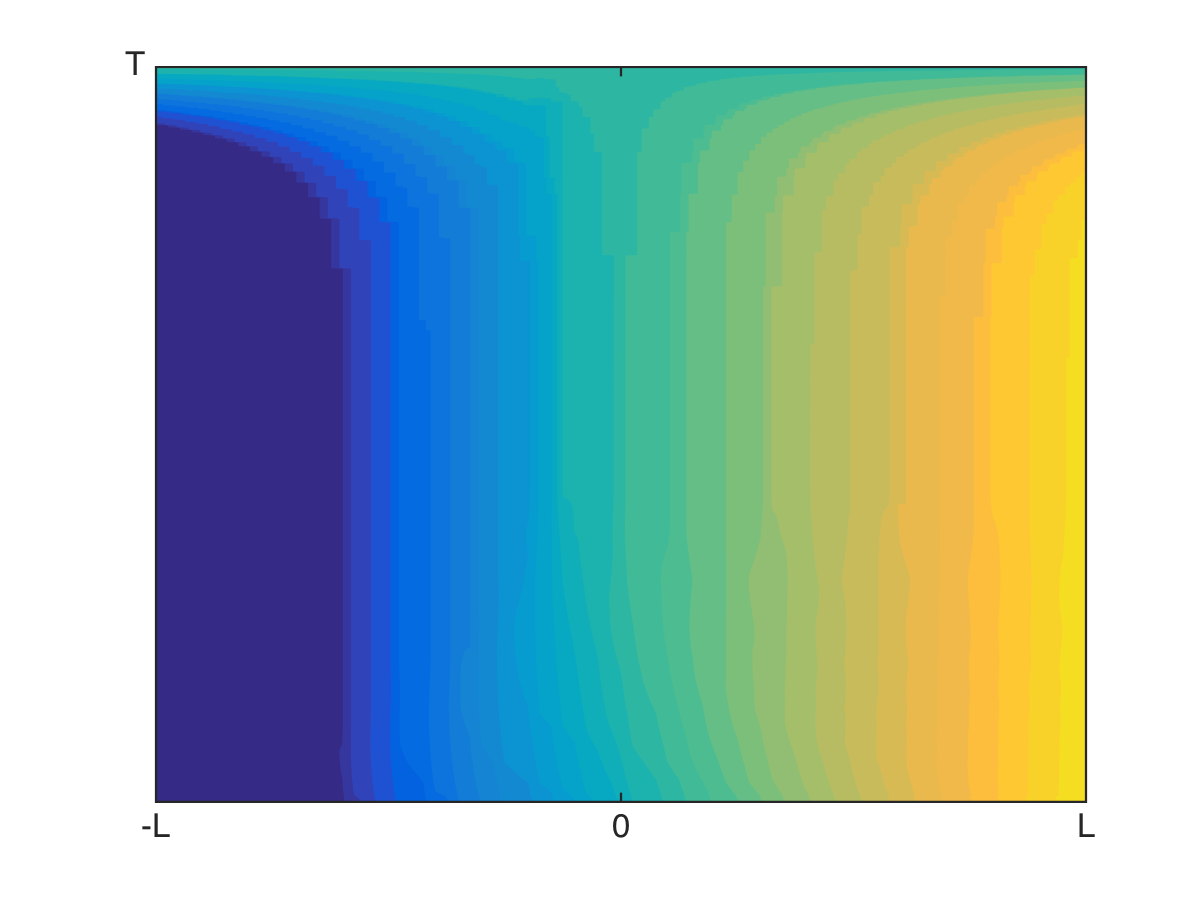}&
\includegraphics[trim=30 10 40 20,clip,width=0.25\textwidth]{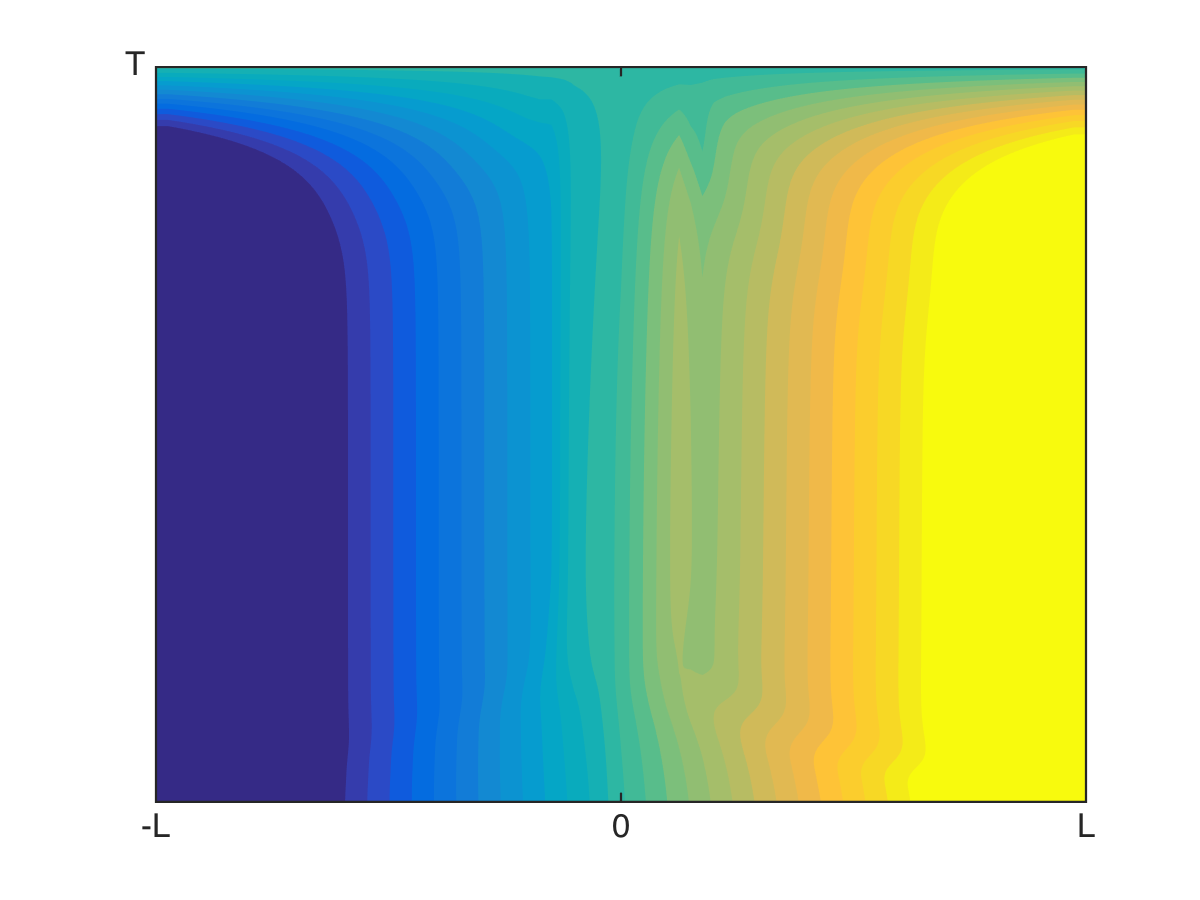}
\\
\hline
\end{tabular}
\end{center}
\caption{Test \#2: Transient behavior of the density $\mu(x,t)$ and the control $f(x,t)$ in $[-L,+L]\times[0,T]$, with $L =1,~T=20$,  for the Hegelmann-Krause's model, \eqref{eq:MFC}-\eqref{eq:J_MFC}. The top picture shows the emergence of opinion clustering in the unconstrained dynamics. Value of the cost functional are reported in correspondence of the choice of the method with penalization parameter $\gamma = 2.5$.}\label{Fig_T2}
\end{figure}

These experiments are showing very clearly the hierarchy of the controls (IC)$\rightarrow$(FH)$\rightarrow$(OC). In particular, it is evident the quasi-optimality of (FH), to the extent that we can claim (FH) $\approx$ (OC). The intuition is that (FH) is an optimal control on the binary dynamics of two particles, and, through the Boltzmann collisional operator, its binary optimality is ``smeared'' on the entire population. However, we have no quantitative method yet to assess such an approximation.
In fact, as commented in Remark \ref{quasiopt}, although the (FH) fulfills a Hamilton-Jacobi-Bellman equation, its synthesis by means of \eqref{eq:kernelK} to control \eqref{eq:FP} unfortunately does not fulfill \eqref{eq:backward}, even not approximately: by testing  \eqref{eq:kernelK} within  \eqref{eq:backward}, there a few useful cancelations, but, because of lack of symmetry, certain terms remains, whose magnitude is still hard to estimate. We expect that those terms are actually not so large and this would somehow justify the quasi-optimality of (FH). This issue remains an interesting open problem.

\paragraph{Concluding remarks.} In this paper, we have presented a hierarchy of control designs for mean field dynamics. At the bottom of the hierarchy, we have introduced optimal feedback controls which are derived for two-agent models, and which are subsequently realized at the mean field level through a Boltzmann approach. At the top of the hierarchy, one finds the mean field optimal control problem and its correspondent optimality conditions. In both cases, we presented a theoretical and numerical analysis of the proposed designs, as well as computational implementations. From the numerical experiments presented in the last section, we observe that although the numerical realization of the mean field optimality system yields the best controller in terms of the cost functional value, feedback controllers obtained for the binary system perform reasonably well, and provide a much simpler control synthesis. We expect to further proceed along this direction of research, in particular in relation to the computation of feedback controllers via Dynamic Programming and Hamilton-Jacobi-Bellman equations for the binary system, as it provides a versatile framework to address different control problems.

\paragraph*{Acknowledgements.} {GA, YPC, and MF acknowledge the support of the ERC-Starting Grant HDSPCONTR "High-Dimensional Sparse Optimal Control". YPC is also supported by the Alexander Humboldt Foundation through the Humboldt Research Fellowship for Postdoctoral Researchers. DK acknowledges the support of the ERC-Advanced Grant OCLOC "From Open-Loop to Closed-Loop Optimal Control of PDEs".}


\begin{thebibliography}{10}

\bibitem{ACCD12}
Y.~Achdou, F.~Camilli, and I.~Capuzzo-Dolcetta.
\newblock Mean field games: Numerical methods for the planning problem.
\newblock {\em SIAM Journal on Control and Optimization}, 50(1):77--109, 2012.

\bibitem{Achdou2016}
Y.~Achdou and M.~Lauri{\`e}re.
\newblock Mean field type control with congestion.
\newblock {\em Applied Mathematics {\&} Optimization}, 73(3):393--418, 2016.

\bibitem{ABCK15}
G.~Albi, M.~Bongini, E.~Cristiani, and D.~Kalise.
\newblock Invisible control of self-organizing agents leaving unknown
  environments.
\newblock to appear in {\em SIAM J. App. Math.}

\bibitem{AHP}
G.~Albi, M.~Herty, and L.~Pareschi.
\newblock Kinetic description of optimal control problems and applications to
  opinion consensus.
\newblock {\em Commun. Math. Sci.}, 13(6):1407--1429, 2015.

\bibitem{APb}
G.~Albi and L.~Pareschi.
\newblock Binary interaction algorithms for the simulation of flocking and
  swarming dynamics.
\newblock {\em Multiscale Model. Simul.}, 11:1--29, 2013.

\bibitem{APa}
G.~Albi and L.~Pareschi.
\newblock Modeling of self-organized systems interacting with a few
  individuals: from microscopic to macroscopic dynamics.
\newblock {\em Appl. Math. Lett.}, 26:397--401, 2013.

\bibitem{APTZ}
G.~Albi, L.~Pareschi, G.~Toscani, and M.~Zanella.
\newblock Recent advances in opinion modeling: control and social influence.
\newblock In N.~Bellomo, P.~Degond, and E.~Tadmor, editors, {\em Active
  Particles Volume 1, Theory, Methods, and Applications}. Birkhauser-Springer,
  2016.

\bibitem{APZa}
G.~Albi, L.~Pareschi, and M.~Zanella.
\newblock Boltzmann-type control of opinion consensus through leaders.
\newblock {\em Phil. Trans. R. Soc. A}, 372:20140138/1--18, 2014.

\bibitem{APZd}
G.~Albi, L.~Pareschi, and M.~Zanella.
\newblock Opinion dynamics over complex networks: kinetic modeling and
  numerical methods.
\newblock {\em arXiv preprint arXiv:1604.00421}, 2016.

\bibitem{ANT}
G.~Aletti, G.~Naldi, and G.~Toscani.
\newblock First-order continuous models of opinion formation.
\newblock {\em SIAM Journal on Applied Mathematics}, 67(3):837--853, 2007.

\bibitem{AFK15}
A.~Alla, M.~Falcone, and D.~Kalise.
\newblock An efficient policy iteration algorithm for dynamic programming
  equations.
\newblock {\em SIAM Journal on Scientific Computing}, 37(1):A181--A200, 2015.

\bibitem{BCCCCGLOPPVZ09}
M.~Ballerini, N.~Cabibbo, R.~Candelier, A.~Cavagna, E.~Cisbani, L.~Giardina,
  L.~Lecomte, A.~Orlandi, G.~Parisi, A.~Procaccini, M.~Viale, and
  V.~Zdravkovic.
\newblock Interaction ruling animal collective behavior depends on topological
  rather than metric distance: evidence from a field study.
\newblock {\em PNAS}, 105(4):1232--1237, 2008.

\bibitem{BELL}
R.~Bellman and R.~E. Kalaba.
\newblock {\em Dynamic programming and modern control theory}, volume~81.
\newblock Citeseer, 1965.

\bibitem{zbMATH01419740}
J.-D. {Benamou} and Y.~{Brenier}.
\newblock {A computational fluid mechanics solution to the Monge-Kantorovich
  mass transfer problem.}
\newblock {\em {Numer. Math.}}, 84(3):375--393, 2000.

\bibitem{befrph13}
A.~{Bensoussan}, J.~{Frehse}, and P.~{Yam}.
\newblock {\em {Mean field games and mean field type control theory.}}
\newblock New York, NY: Springer, 2013.

\bibitem{BN}
A.~Bobylev and K.~Nanbu.
\newblock Theory of collision algorithms for gases and plasmas based on the
  {B}oltzmann equation and the {L}andau-{F}okker-{P}lanck equation.
\newblock {\em Physical Review E}, 61(4):4576, 2000.

\bibitem{bofo16}
M.~Bongini and M.~Fornasier.
\newblock {Sparse Control of Multiagent Systems}.
\newblock In N.~Bellomo, P.~Degond, and E.~Tadmor, editors, {\em Active
  Particles Volume 1, Theory, Methods, and Applications}. Birkhauser-Springer.

\bibitem{BF}
M.~Bongini and M.~Fornasier.
\newblock Sparse stabilization of dynamical systems driven by attraction and
  avoidance forces.
\newblock {\em Netw. Heterog. Media}, 9(1):1--31, 2014.

\bibitem{BD10}
C.~Buet and S.~Dellacherie.
\newblock On the {C}hang and {C}ooper scheme applied to a linear
  {F}okker-{P}lanck equation.
\newblock {\em Commun. Math. Sci.}, 8(4):1079--1090, 12 2010.

\bibitem{BFMW14}
M.~Burger, M.~D. Francesco, P.~A. Markowich, and M.-T. Wolfram.
\newblock Mean field games with nonlinear mobilities in pedestrian dynamics.
\newblock {\em Discrete and Continuous Dynamical Systems - Series B},
  19(5):1311--1333, 2014.

\bibitem{CDFSTB03}
S.~Camazine, J.~Deneubourg, N.~Franks, J.~Sneyd, G.~Theraulaz, and E.~Bonabeau.
\newblock Self-organization in biological systems.
\newblock {\em Princeton University Press}, 2003.

\bibitem{CJ09}
F.~Camilli and E.~R. Jakobsen.
\newblock A finite element like scheme for integro-partial differential
  {H}amilton-{J}acobi-{B}ellmann equations.
\newblock {\em SIAM Journal on Numerical Analysis}, 47(4):2407--2431, 2009.

\bibitem{CCR11}
J.~A. Ca{\~n}izo, J.~A. Carrillo, and J.~Rosado.
\newblock A well-posedness theory in measures for some kinetic models of
  collective motion.
\newblock {\em Math. Models Methods Appl. Sci.}, 21(3):515--539, 2011.

\bibitem{CFPT}
M.~Caponigro, M.~Fornasier, B.~Piccoli, and E.~Tr\'elat.
\newblock Sparse stabilization and optimal control of the {C}ucker-{S}male
  model.
\newblock {\em Math. Control Relat. Fields}, 3:447--466, 2013.

\bibitem{CS14}
E.~Carlini and F.~J. Silva.
\newblock A fully discrete semi-lagrangian scheme for a first order mean field
  game problem.
\newblock {\em SIAM Journal on Numerical Analysis}, 52(1):45--67, 2014.

\bibitem{CCH13}
J.~A. Carrillo, Y.-P. Choi, and M.~Hauray.
\newblock {The derivation of swarming models: mean-field limit and Wasserstein
  distances}.
\newblock In A.~Muntean and F.~Toschi, editors, {\em Collective Dynamics from
  Bacteria to Crowds}, CISM International Centre for Mechanical Sciences, pages
  1--46. Springer.

\bibitem{CCP16}
J.~A. Carrillo, Y.-P. Choi, and S.~P{\'e}rez.
\newblock {A review on attractive-repulsive hydrodynamics for consensus in
  collective behavior}.
\newblock arXiv:1605.00232.

\bibitem{MR2507454}
J.~A. Carrillo, M.~R. D'Orsogna, and V.~Panferov.
\newblock Double milling in self-propelled swarms from kinetic theory.
\newblock {\em Kinet. Relat. Models}, 2(2):363--378, 2009.

\bibitem{cafotove10}
J.~A. Carrillo, M.~Fornasier, G.~Toscani, and F.~Vecil.
\newblock Particle, kinetic, and hydrodynamic models of swarming.
\newblock In G.~Naldi, L.~Pareschi, G.~Toscani, and N.~Bellomo, editors, {\em
  Mathematical Modeling of Collective Behavior in Socio-Economic and Life
  Sciences}, Modeling and Simulation in Science, Engineering and Technology,
  pages 297--336. Birkh{\"a}user Boston, 2010.

\bibitem{CC}
J.~Chang and G.~Cooper.
\newblock A practical difference scheme for {F}okker-{P}lanck equations.
\newblock {\em Journal of Computational Physics}, 6(1):1 -- 16, 1970.

\bibitem{Choi16}
Y.-P. Choi.
\newblock Global classical solutions of the {V}lasov-{F}okker-{P}lanck equation
  with local alignment forces.
\newblock {\em Nonlinearity}, 29(7):1887--1916, 2016.

\bibitem{CHL16}
Y.-P. Choi, S.-Y. Ha, and Z.~Li.
\newblock {Emergent dynamics of the Cucker-Smale flocking model and its
  variants}.
\newblock arXiv:1604.04887.

\bibitem{ChuDorMarBerCha07}
Y.~Chuang, M.~D'Orsogna, D.~Marthaler, A.~Bertozzi, and L.~Chayes.
\newblock State transition and the continuum limit for the 2{D} interacting,
  self-propelled particle system.
\newblock {\em Physica D}, (232):33--47, 2007.

\bibitem{ChuHuaDorBer07}
Y.~Chuang, Y.~Huang, M.~D'Orsogna, and A.~Bertozzi.
\newblock Multi-vehicle flocking: scalability of cooperative control algorithms
  using pairwise potentials.
\newblock {\em IEEE International Conference on Robotics and Automation}, pages
  2292--2299, 2007.

\bibitem{CPT}
S.~Cordier, L.~Pareschi, and G.~Toscani.
\newblock On a kinetic model for a simple market economy.
\newblock {\em Journal of Statistical Physics}, 120(1-2):253--277, 2005.

\bibitem{CouFra02}
I.~Couzin and N.~Franks.
\newblock Self-organized lane formation and optimized traffic flow in army
  ants.
\newblock {\em Proc. R. Soc. Lond.}, B 270:139--146, 2002.

\bibitem{CKFL05}
I.~Couzin, J.~Krause, N.~Franks, and S.~Levin.
\newblock Effective leadership and decision making in animal groups on the
  move.
\newblock {\em Nature}, 433:513--516, 2005.

\bibitem{crpito10}
E.~Cristiani, B.~Piccoli, and A.~Tosin.
\newblock {Modeling self-organization in pedestrians and animal groups from
  macroscopic and microscopic viewpoints.}
\newblock In G.~Naldi, L.~Pareschi, G.~Toscani, and N.~Bellomo, editors, {\em
  Mathematical Modeling of Collective Behavior in Socio-Economic and Life
  Sciences}, Modeling and Simulation in Science, Engineering and Technology.
  Birkh{\"a}user Boston, 2010.

\bibitem{crpito11}
E.~Cristiani, B.~Piccoli, and A.~Tosin.
\newblock {Multiscale modeling of granular flows with application to crowd
  dynamics.}
\newblock {\em Multiscale Model. Simul.}, 9(1):155--182, 2011.

\bibitem{CuckerDong11}
F.~Cucker and J.-G. Dong.
\newblock A general collision-avoiding flocking framework.
\newblock {\em IEEE Trans. Automat. Control}, 56(5):1124--1129, 2011.

\bibitem{cucker-mordecki}
F.~Cucker and E.~Mordecki.
\newblock Flocking in noisy environments.
\newblock {\em J. Math. Pures Appl. (9)}, 89(3):278--296, 2008.

\bibitem{CS}
F.~Cucker and S.~Smale.
\newblock Emergent behavior in flocks.
\newblock {\em IEEE Trans. Automat. Control}, 52(5):852--862, 2007.

\bibitem{CucSmaZho04}
F.~Cucker, S.~Smale, and D.~Zhou.
\newblock Modeling language evolution.
\newblock {\em Found. Comput. Math.}, 4(5):315--343, 2004.

\bibitem{DHL14}
P.~Degond, M.~Herty, and J.-G. Liu.
\newblock Meanfield games and model predictive control.
\newblock {\em arXiv preprint arXiv:1412.7517}, 2014.

\bibitem{DFT10}
R.~Duan, M.~Fornasier, and G.~Toscani.
\newblock A kinetic flocking model with diffusion.
\newblock {\em Comm. Math. Phys.}, 300:95--145, 2010.

\bibitem{FFbook}
M.~Falcone and R.~Ferretti.
\newblock {\em Semi-Lagrangian Approximation Schemes for Linear and
  {H}amilton-{J}acobi Equations}.
\newblock Society for Industrial and Applied Mathematics, Philadelphia, PA,
  2013.

\bibitem{F16}
A.~Festa.
\newblock Reconstruction of independent sub-domains for a class of
  {H}amilton-{J}acobi equations and application to parallel computing.
\newblock {\em ESAIM: M2AN}, 50(4):1223--1240, 2016.

\bibitem{FWb}
A.~Festa and M.-T. Wolfram.
\newblock Collision avoidance in pedestrian dynamics.
\newblock In {\em 2015 54th IEEE Conference on Decision and Control (CDC)},
  pages 3187--3192. IEEE, 2015.

\bibitem{Fil}
A.~F. Filippov.
\newblock {\em Differential equations with discontinuous righthand sides}.
\newblock Mathematics and Its Applications, Kluwer Academic, Dordrecht, 1988.

\bibitem{FS13}
M.~Fornasier and F.~Solombrino.
\newblock Mean-field optimal control.
\newblock {\em ESAIM Control Optim. Calc. Var.}, 20(4):1123--1152, 2014.

\bibitem{GC04}
G.~Gr\'egoire and H.~Chat\'e.
\newblock Onset of collective and cohesive motion.
\newblock {\em Phy. Rev. Lett.}, (92), 2004.

\bibitem{hekr02}
R.~Hegselmann and U.~Krause.
\newblock Opinion dynamics and bounded confidence: models, analysis and
  simulation.
\newblock {\em J. Artificial Societies and Social Simulation}, 5(3), 2002.

\bibitem{zbMATH01169593}
J.~{Hofbauer} and K.~{Sigmund}.
\newblock {\em {Evolutionary Games and Population Dynamics.}}
\newblock Cambridge: Cambridge University Press, 1998.

\bibitem{HCM03}
M.~Huang, P.~Caines, and R.~Malham{\'e}.
\newblock {Individual and mass behaviour in large population stochastic
  wireless power control problems: centralized and Nash equilibrium solutions}.
\newblock {\em Proceedings of the 42nd IEEE Conference on Decision and Control
  Maui, Hawaii USA, December 2003}, pages 98--103, 2003.

\bibitem{MR2000132}
A.~Jadbabaie, J.~Lin, and A.~S. Morse.
\newblock Correction to: ``{C}oordination of groups of mobile autonomous agents
  using nearest neighbor rules''.
\newblock {\em IEEE Trans. Automat. Control}, 48(9):1675, 2003.

\bibitem{KKK16}
D.~Kalise, A.~Kr\"oner, and K.~Kunisch.
\newblock Local minimization algorithms for dynamic programming equations.
\newblock {\em SIAM Journal on Scientific Computing}, 38(3):A1587--A1615, 2016.

\bibitem{KeMinAuWan02}
J.~Ke, J.~Minett, C.-P. Au, and W.-Y. Wang.
\newblock Self-organization and selection in the emergence of vocabulary.
\newblock {\em Complexity}, 7:41--54, 2002.

\bibitem{kese70}
E.~F. Keller and L.~A. Segel.
\newblock {Initiation of slime mold aggregation viewed as an instability.}
\newblock {\em J. Theor. Biol.}, 26(3):399--415, 1970.

\bibitem{KocWhi98}
A.~Koch and D.~White.
\newblock The social lifestyle of myxobacteria.
\newblock {\em Bioessays 20}, pages 1030--1038, 1998.

\bibitem{lali07}
J.-M. Lasry and P.-L. Lions.
\newblock {Mean field games.}
\newblock {\em Jpn. J. Math. (3)}, 2(1):229--260, 2007.

\bibitem{LeoFio01}
N.~Leonard and E.~Fiorelli.
\newblock Virtual leaders, artificial potentials and coordinated control of
  groups.
\newblock {\em Proc. 40th IEEE Conf. Decision Contr.}, pages 2968--2973, 2001.

\bibitem{MRRS}
D.~Q. Mayne, J.~B. Rawlings, C.~V. Rao, and P.~O.~M. Scokaert.
\newblock Constrained model predictive control: stability and optimality.
\newblock {\em Automatica J. IFAC}, 36(6):789--814, 2000.

\bibitem{MT13}
S.~Motsch and E.~Tadmor.
\newblock {Heterophilious dynamics enhances consensus}.
\newblock {\em SIAM Rev.}, 56(4):577--621, 2014.

\bibitem{Niw94}
H.~Niwa.
\newblock Self-organizing dynamic model of fish schooling.
\newblock {\em J. Theor. Biol.}, 171:123--136, 1994.

\bibitem{NCM10}
M.~Nuorian, P.~Caines, and R.~Malham{\'e}.
\newblock {Synthesis of {Cucker-Smale} type flocking via mean field stochastic
  control theory: {Nash} equilibria}.
\newblock {\em Proceedings of the 48th Allerton Conf. on Comm., Cont. and
  Comp., Monticello, Illinois, pp. 814-819, Sep. 2010}, pages 814--815, 2010.

\bibitem{NCM11}
M.~Nuorian, P.~Caines, and R.~Malham{\'e}.
\newblock Mean field analysis of controlled {Cucker-Smale} type flocking:
  Linear analysis and perturbation equations.
\newblock {\em Proceedings of 18th IFAC World Congress Milano (Italy) August 28
  - September 2, 2011}, pages 4471--4476, 2011.

\bibitem{PTa}
L.~Pareschi and G.~Toscani.
\newblock {\em Interacting multi-agent systems. {K}inetic equations \& {M}onte
  {C}arlo methods}.
\newblock Oxford University Press, USA, 2013.

\bibitem{PE99}
J.~Parrish and L.~Edelstein-Keshet.
\newblock Complexity, pattern, and evolutionary trade-offs in animal
  aggregation.
\newblock {\em Science}, 294:99--101, 1999.

\bibitem{ParVisGru02}
J.~Parrish, S.~Viscido, and D.~Gruenbaum.
\newblock Self-organized fish schools: An examination of emergent properties.
\newblock {\em Biol. Bull.}, 202:296--305, 2002.

\bibitem{PerGomElo09}
L.~Perea, G.~G\'omez, and P.~Elosegui.
\newblock Extension of the {C}ucker-{S}male control law to space flight
  formations.
\newblock {\em AIAA Journal of Guidance, Control, and Dynamics}, 32:527--537,
  2009.

\bibitem{be07}
B.~Perthame.
\newblock {\em {Transport Equations in Biology.}}
\newblock {Basel: Birkh\"auser}, 2007.

\bibitem{Rom96}
W.~Romey.
\newblock Individual differences make a difference in the trajectories of
  simulated schools of fish.
\newblock {\em Ecol. Model.}, 92:65--77, 1996.

\bibitem{SAB16}
S.~Roy, M.~Annunziato, and A.~Borz{\`\i}.
\newblock A {F}okker--{P}lanck feedback control-constrained approach for
  modeling crowd motion.
\newblock {\em Journal of Computational and Theoretical Transport}, pages
  1--17, 2016.

\bibitem{MR2438215}
M.~B. Short, M.~R. D'Orsogna, V.~B. Pasour, G.~E. Tita, P.~J. Brantingham,
  A.~L. Bertozzi, and L.~B. Chayes.
\newblock A statistical model of criminal behavior.
\newblock {\em Math. Models Methods Appl. Sci.}, 18(suppl.):1249--1267, 2008.

\bibitem{SugSan97}
K.~Sugawara and M.~Sano.
\newblock Cooperative acceleration of task performance: {F}oraging behavior of
  interacting multi-robots system.
\newblock {\em Physica D}, 100:343--354, 1997.

\bibitem{SWS}
K.~Sznajd-Weron and J.~Sznajd.
\newblock Opinion evolution in closed community.
\newblock {\em International Journal of Modern Physics C}, 11(06):1157--1165,
  2000.

\bibitem{TonTu95}
J.~Toner and Y.~Tu.
\newblock Long-range order in a two-dimensional dynamical xy model: {H}ow birds
  fly together.
\newblock {\em Phys. Rev. Lett.}, 75:4326--4329, 1995.

\bibitem{T}
G.~Toscani.
\newblock Kinetic models of opinion formation.
\newblock {\em Comm. Math. Sci.}, 4(3):481--496, 2006.

\bibitem{vicsek}
T.~Vicsek, A.~Czirok, E.~Ben-Jacob, I.~Cohen, and O.~Shochet.
\newblock Novel type of phase transition in a system of self-driven particles.
\newblock {\em Phys. Rev. Lett.}, 75:1226--1229, 1995.

\bibitem{viza12}
T.~Vicsek and A.~Zafeiris.
\newblock Collective motion.
\newblock {\em Physics Reports}, 517:71--140, 2012.

\bibitem{VILL}
C.~Villani.
\newblock On a new class of weak solutions to the spatially homogeneous
  {B}oltzmann and {L}andau equations.
\newblock {\em Archive for Rational Mechanics and Analysis}, 143(3):273--307,
  1998.

\bibitem{vi09}
C.~Villani.
\newblock {\em Optimal Transport}, volume 338 of {\em Grundlehren der
  Mathematischen Wissenschaften [Fundamental Principles of Mathematical
  Sciences]}.
\newblock Springer-Verlag, Berlin, 2009.
\newblock Old and new.

\bibitem{YEECBKMS09}
C.~Yates, R.~Erban, C.~Escudero, L.~Couzin, J.~Buhl, L.~Kevrekidis, P.~Maini,
  and D.~Sumpter.
\newblock Inherent noise can facilitate coherence in collective swarm motion.
\newblock {\em Proceedings of the National Academy of Sciences},
  106:5464--5469, 2009.

\bibitem{Zei}
E.~Zeidler.
\newblock {\em Applied Functional Analysis}.
\newblock Applied Mathematical Sciences. Springer, New York, 1995.

\end{thebibliography}
\end{document}